\newtheorem{thm}{Theorem}[section]
\newtheorem{prop}[thm]{Proposition}
\newtheorem{lem}[thm]{Lemma}
\def\N{\mathbb{N}}
\def\N{\mathbb{N}}
\def\logf{\log\frac{1}{\epsilon}}
\numberwithin{equation}{section}
\title{Multifractal level sets and   metric mean dimension with potential}
\author{Tianlong Zhang$^1$, Ercai Chen$^1$ and Xiaoyao Zhou$^1$*}
\address
{1.School of Mathematical Sciences and Institute of Mathematics, Ministry of Education Key Laboratory of NSLSCS, Nanjing Normal University, Nanjing, 210023, Jiangsu, P.R.China}
\email{ztl20001007@163.com}
\email{ecchen@njnu.edu.cn}
\email{zhouxiaoyaodeyouxian@126.com}
\date{}
\begin{document}

\maketitle

\renewcommand{\thefootnote}{}
\footnote{2020 \emph{Mathematics Subject Classification}:    37A15, 37C45.}
\footnotetext{\emph{Key words and phrases}: Metric mean dimension with potential; Level sets; Specification property; multifractal analysis; Variational principle; }
\footnote{*corresponding author.}

\begin{abstract}
	Let $(X,f)$ be a dynamical system with the specification property and $\varphi$ be continuous functions. In this paper, we establish some conditional  variational principles for the upper and lower Bowen/packing metric mean dimension with potential of  multifractal level set $K_\alpha:=\{x\in X:\lim\limits_{n\to\infty}\dfrac{1}{n}\sum\limits_{i=0}^{n-1}\varphi(f^ix)=\alpha\}.$
\end{abstract}

\section{Introduction}   
This paper contributes to the study of  conditional  variational principles for the upper and lower Bowen/packing metric mean dimension with potential of  multifractal level set.

The following two theories are the main backgrounds of the present paper:

\textbf{Variational principle: } Entropy is an important invariant which describes the complexity of dynamical systems.  The relationship between topological  entropy and measure entropy is  called variational principle, which is  an elegant formula in entropy theory and built a bridge between topological dynamical systems and ergodic theory. Ruelle introduced the concept of topological pressure by leading a potential into the system and established a variational principle for some transformations \cite{ru73}. Later, in 1976, Misiurewicz gave a simpler proof of the variational principle for all transformations \cite{mis76}. Topological pressure and its variational principles are essential components of thermodynamics.
Mean dimension  is introduced by Gromov \cite{gor99} in order to study the systems with infinite topological entropy (see \cite{ya80} for instance). Later,  Lindenstrauss and Weiss introduced the metric mean dimension \cite{lw00}, which is a metric version of the mean dimension. 
It is natural to tie the mean dimension theory to the ergodic theory. Lindenstrauss and Tsukamoto found a "certain measure metric mean dimension" from information theory and take rate distortions function as some certain measure-theoretic quantities and successfully established variational principle for mean dimension and metric mean dimension \cite{lt18,lt19}. It promotes the rapid development of mean dimension theory especially, for variational principle. Besides, in \cite{vv17}, the authors took some amount from entropy theory as candidate for "measure mean dimension" and  establised a good and easier-to-calculate variational principle for mean dimension and metric mean dimension. After that, lots of different variational principles also have been established, for example, by  Gutman and Spiewak \cite{gs10},  Tsukamoto \cite{tsu20}, Shi \cite{shi22}, Yang, Chen and Zhou \cite{ycz22}. 
It is worth pointing out that Tsukamoto introduced the notion of upper mean dimension with potential \cite{tsu20}, which encourage us to establish new variational principles for mean dimension with potential.

\textbf{Multifractal analysis: }   In 1973, Bowen \cite{bow73} introduced the concept of topological entropy for noncompact sets.  In 1984, Pesin and  Pitskel \cite{pp84}  extended it to the concept of topological pressure for noncompact sets. After that,  topological entropy and pressure are crossover with the study of  multifractal analysis.
 Multifractal analysis studies the complexity of level sets of invariant local
	quantities obtained from a dynamical system such as
Birkhoff averages, Lyapunov exponents, pointwise dimensions, local entropies $\cdots.$
In this paper, we focus	on the following framework.
Let $(X,d,f)$ be a topological dynamical system(abbr, $\rm{TDS}$), i.e. a compact space $(X,d)$ and a continuous transformation $f:X\to X$. For any continuous function $\varphi:X\to\mathbb{R}$, the space $X$ has a natural multifractal decomposition
$$X=\bigcup_{\alpha\in\mathbb{R}}K_{\alpha}\cup I_{\varphi}$$
where 
\begin{align*}
K_{\alpha}&=\left\{x\in X:\lim\limits_{n\to\infty}\frac{1}{n}\sum_{i=0}^{n}\varphi(f^ix)=\alpha\right\},\\
I_{\varphi}&=\left\{x\in X:\lim\limits_{n\to\infty}\frac{1}{n}\sum_{i=0}^{n}\varphi(f^ix)\ \text{does not exist}\right\}.
\end{align*}
In this paper, we take $K_{\alpha}$ as our main research object, the study of $I_{\varphi}$ will be given in another paper. There are abundant literature studying the Bowen topological entropy and topological pressure of the Level sets. For a system with the specification property, a variational principle between Bowen topological entropy of $K_{\alpha}$ and measure-theoretic entropy has been established by Takens and Verbitskiy \cite{tv03}. Thompson established a variational principle for  topological pressure for $K_{\alpha}$ in 2009 \cite{th09}.
Recently, Backes and Rodrigues give a contribution to the study of ergodic theoretical aspects of the metric mean dimension by present a variational principle on $K_{\alpha}$ of  a system with the specification property \cite{br23}. 
Recently, Cheng, Li and Selmi \cite{cls21} introduced the Bowen upper metric mean dimension with potential by open covers and  Yang, Chen, and Zhou \cite{ycz22} introduced a new dual notion called packing upper metric mean dimension with potential  and prove the relation between the three types of upper metric mean dimension. The aim of the paper is using these quantities to measure the complexity of $K_{\alpha}$. 
Now, we state our main result as follows:
\begin{thm}\label{thm 2.3}
	Let $(X,d,T)$ be a $\rm{TDS}$ satisfying the specification property. Let $\varphi,\psi\in C(X,\mathbb{R}),  and\ \alpha \in \mathcal L_\varphi$. Then
	\begin{align*}
	\overline{\rm{mdim}}_M^B(f,K_{\alpha},d,\psi)&=\overline{\rm{midm}}_M^P(f,K_{\alpha},d,\psi)\\
	&=\Lambda_{\varphi}\overline{\rm{mdim}}_M(f,K_{\alpha},d,\psi)\\
	&={\rm{H}}_{\varphi}\overline{\rm{mdim}}_M(f,K_{\alpha},d,\psi).\\
	\underline{\rm{mdim}}_M^B(f,K_{\alpha},d,\psi)&=\underline{\rm{midm}}_M^P(f,K_{\alpha},d,\psi)\\
	&=\Lambda_{\varphi}\underline{\rm{mdim}}_M(f,K_{\alpha},d,\psi)\\
	&={\rm{H}}_{\varphi}\underline{\rm{mdim}}_M(f,K_{\alpha},d,\psi).
	\end{align*}
\end{thm}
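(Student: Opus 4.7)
The plan is to prove the equalities by a sandwich argument, establishing the upper bound via a covering construction and the lower bound via a Moran-fractal construction inside $K_\alpha$ using the specification property, following the general philosophy of Takens-Verbitskiy, Thompson, and Backes-Rodrigues but adapted to the potential $\psi$ and the $\epsilon \to 0$ scaling inherent in metric mean dimension.

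First I would exploit the results already in the literature to reduce the chain of four equalities to two key inequalities. From Yang-Chen-Zhou one has $\overline{\rm{mdim}}_M^B \le \overline{\rm{mdim}}_M^P$ and the analogous inequality for the lower versions, and from the relationship between the three forms of upper metric mean dimension with potential one knows that the quantities $\Lambda_{\varphi}\overline{\rm{mdim}}_M(f,K_\alpha,d,\psi)$ and $\mathrm{H}_{\varphi}\overline{\rm{mdim}}_M(f,K_\alpha,d,\psi)$ are automatically bounded below by the corresponding Bowen version. Thus the chain closes if I can show (i) $\overline{\rm{mdim}}_M^P(f,K_\alpha,d,\psi) \le \mathrm{H}_{\varphi}\overline{\rm{mdim}}_M(f,K_\alpha,d,\psi)$ (and likewise for $\Lambda_\varphi$), and (ii) $\overline{\rm{mdim}}_M^B(f,K_\alpha,d,\psi) \ge \Lambda_{\varphi}\overline{\rm{mdim}}_M(f,K_\alpha,d,\psi)$. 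The analogous statements for the lower versions will be handled simultaneously by keeping track of $\limsup$ versus $\liminf$ in $\epsilon$.

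For the upper bound (i), I would use a standard covering argument at scale $\epsilon>0$: given $\delta>0$, every $x\in K_\alpha$ satisfies $|\tfrac{1}{n}S_n\varphi(x)-\alpha|<\delta$ for all $n\ge N(x)$, so $K_\alpha \subset \bigcup_N \{x: |\tfrac{1}{n}S_n\varphi(x)-\alpha|<\delta \text{ for all } n\ge N\}$. Counting $(n,\epsilon)$-separated sets inside these sub-level sets and feeding them into the definition of packing/Bowen upper metric mean dimension with potential produces the desired inequality after letting $\delta\to 0$ and $\epsilon\to 0$, the $\psi$-weight being absorbed into the standard Carath\'eodory machinery. The key technical input is that $\varphi$ is continuous, so its Birkhoff sums are almost-additive in $n$, permitting the separation estimates to be compared with the relevant measure-theoretic quantity through the classical entropy-distribution arguments of Pesin-Pitskel.

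The hard part is (ii), the lower bound, where I would build a Moran-type Cantor subset of $K_\alpha$ using the specification property. Starting from ergodic measures $\mu$ with $\int\varphi\,d\mu=\alpha$ whose "measure-theoretic upper metric mean dimension with potential" approaches $\Lambda_{\varphi}\overline{\rm{mdim}}_M(f,K_\alpha,d,\psi)$, I would fix a scale $\epsilon$, pick a large $(n,\epsilon)$-separated subset of a set of $\mu$-generic points on which $\tfrac{1}{n}S_n\varphi\approx\alpha$ and $\tfrac{1}{n}S_n\psi$ is close to $\int\psi\,d\mu$, and concatenate such orbit segments along a rapidly growing sequence $n_k$ using gaps of length provided by specification. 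Choosing the sequence $(n_k)$ to grow fast enough ensures every limit point of the resulting Cantor set belongs to $K_\alpha$. A Bowen-type probability measure placed on the Cantor set (uniform on concatenations) then admits the mass-distribution principle, giving the matching lower bound on $\overline{\rm{mdim}}_M^B(f,K_\alpha,d,\psi)$ at each scale $\epsilon$. The principal obstacle is the double limit: the construction must be carried out uniformly enough in $\epsilon$ that the bound survives dividing by $\log(1/\epsilon)$ and taking $\limsup_{\epsilon\to 0}$ (for the upper case) or $\liminf_{\epsilon\to 0}$ (for the lower case); this is handled by doing the Moran construction separately for each $\epsilon$ in a sequence realizing the $\limsup$/$\liminf$, and transferring the measure-theoretic estimates into the potential-weighted setting by absorbing $\psi$ into an appropriate tempered exponential weight on the Bowen balls.
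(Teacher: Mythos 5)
Your reduction of the theorem to two inequalities has a gap: it never closes the chain at $\mathrm{H}_\varphi\overline{\rm{mdim}}_M$. You claim that Bowen $\le \Lambda_\varphi\overline{\rm{mdim}}_M$ and Bowen $\le \mathrm{H}_\varphi\overline{\rm{mdim}}_M$ are ``automatic,'' then prove $(\text{i})$ Packing $\le \mathrm{H}_\varphi\overline{\rm{mdim}}_M$, $\Lambda_\varphi\overline{\rm{mdim}}_M$ and $(\text{ii})$ Bowen $\ge \Lambda_\varphi\overline{\rm{mdim}}_M$. All of these inequalities point \emph{upward} toward $\mathrm{H}_\varphi\overline{\rm{mdim}}_M$; you never establish an upper bound $\mathrm{H}_\varphi\overline{\rm{mdim}}_M \le$ (Bowen or Packing or $\Lambda_\varphi$), so equality for the fourth quantity does not follow. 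In fact, the ``automatic'' claim is not automatic at all: the Bowen $\le \Lambda_\varphi$ bound requires the decomposition $K_\alpha\subset\bigcup_k G(\alpha,\delta,k)$ and a covering argument (this is the paper's Proposition \ref{prop 3.1}), and nothing in the references you cite gives Bowen $\le \mathrm{H}_\varphi\overline{\rm{mdim}}_M$ for free.

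There are two further, more structural, problems with how you aim the two constructions. The covering argument you describe in (i) counts $(n,\epsilon)$-separated sets inside $P(\alpha,\delta,n)$; this is exactly the definition of $\Gamma_\varphi^\psi$ and hence of $\Lambda_\varphi\overline{\rm{mdim}}_M$, so it naturally yields Packing $\le \Lambda_\varphi\overline{\rm{mdim}}_M$. It does \emph{not} directly yield Packing $\le \mathrm{H}_\varphi\overline{\rm{mdim}}_M$; to pass from the combinatorial quantity $\Lambda_\varphi\overline{\rm{mdim}}_M$ to the measure-theoretic quantity $\mathrm{H}_\varphi\overline{\rm{mdim}}_M$ you need a weak-$*$ limit argument (take empirical measures $\sigma_k^{(j)}$ weighted by $\exp(S_n\psi\cdot\log\frac1\epsilon)$ on large separated sets, show their Ces\`aro averages converge to an $f$-invariant measure $\mu^{(j)}$ with $\int\varphi\,\mathrm d\mu^{(j)}=\alpha$, then control $h_{\mu^{(j)}}(f,\xi)$ via the entropy of the partition as in Misiurewicz). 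You omit this step entirely. Conversely, the Moran construction in (ii) is built from \emph{measures}, so it naturally proves Bowen $\ge \mathrm{H}_\varphi\overline{\rm{mdim}}_M$, not Bowen $\ge \Lambda_\varphi\overline{\rm{mdim}}_M$; aiming it at $\Lambda_\varphi$ presupposes the very link between measures and separated-set counts that the missing middle inequality was supposed to provide. Finally, in (ii) you start from ``ergodic measures $\mu$ with $\int\varphi\,\mathrm d\mu=\alpha$''; such measures may fail to exist or may not attain the supremum defining $\mathrm{H}_\varphi\overline{\rm{mdim}}_M$, and you need an ergodic decomposition step (as in the paper's Lemma \ref{lem 3.6}, modelled on Young) approximating a non-ergodic maximizer $\mu\in\mathcal M_f(X,\varphi,\alpha)$ by a finite convex combination $\nu_k=\sum_i\lambda_i\nu_i^k$ of ergodic measures, with orbit segments drawn from the $\nu_i^k$-typical sets and concatenated by specification with lengths proportional to $\lambda_i$. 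Without this, the Moran set need not land in $K_\alpha$ and the mass-distribution bound is not comparable to $\mathrm{H}_\varphi\overline{\rm{mdim}}_M$.
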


\section{Definitions and Statements}
 In this section, we introduce the notions of metric mean dimension with potential on subsets of $X$, some auxiliary quantities and our main result. 
 
Let $(X,d, f)$ be a topological dynamical system (abbr. $\mathrm{TDS}$), i.e., a compact metric space $(X,d)$ with a continuous transformation $f:X\to X$.
Given $n\in \mathbb N$, $x,y \in X$, the $n$-th Bowen metric $d_n$  on $X$ is defined by  $$d_n(x,y):=\max_{0\leq j\leq n-1}\limits d(f^{j}(x),f^j(y)).$$ Then  \emph{Bowen open  ball } of radius $\epsilon$ and  order $n$ in the metric $d_n$ around $x$ is   given by
$$B_n(x,\epsilon):=\{y\in X: d_n(x,y)<\epsilon\}.$$
We say that a subset $E$ can form an $(n,\epsilon)$-ball cover of $Z\subset X$ if $Z\subset \bigcup_{x\in E}B_n(x,\epsilon)$.
Denote $\#A$ to be the cardinality of a finite set $A$, $\partial B$ to be the boundary of set $B$ and $C(X,\mathbb{R})$ to be the all of continuous functions.
Denote $\mathcal{M}_f(X)$ to be the set that is consisted of all $f$-invariant Borel probability measures on $X$ and denote $\mathcal{M}_f^e(X)$ to be the set that is consisted of all $f$-ergodic invariant Borel probability measures on $X.$ 
Let  $\psi\in C(X,\mathbb R),$
$
S_n\psi(x):=\sum_{i=0}^{n-1}\psi(f^ix),
Var(\psi,\epsilon):=\max\{|\psi(x)-\psi(y)|:d(x,y)<\epsilon\}.
$

\subsection{Specification Property}
Let $(X,d,f)$ be a $\mathrm{TDS}$.   $f$ satisfies the  specification property means that for every $\epsilon>0$, there exists an interge $m=m(\epsilon)$ such that for any finite interge intervals $\{[a_j,b_j]_{j=1}^k\}$ with $a_{j+1}-b_j\ge m$ for $j\in \{1,\dots,k-1\}$ and any $x_1,\dots,x_k$ in $X$, there exists a point $x\in X$ such that
$$d_{b_j-a_j}(f^{a_j}x,x_j)<\epsilon\ for\ all\ j=1,\dots,k.$$

\subsection{Bowen metric mean dimension with potential for  subsets}

Given a set $Z\subset X, N\in \mathbb N, 0<\epsilon<1,s\in \mathbb R,$ and a potential $\psi\in C(X,\mathbb{R})$, we consider 
$$m_{N,\epsilon}(f,Z,s,d,\psi)=\inf\left\{\sum_{i\in I}\exp\left(-sn_i+S_{n_i}\psi(x_i)\cdot\Big(\log\frac{1}{\epsilon}\Big)\right)\right\},$$
where the infimum is taken over all finite or countable covers $\{B_{n_i}(x_i,\epsilon)\}_{i\in I}$ of $Z$ with $n_i \geq N$. Obviously, the limit 
$$m_{\epsilon}(f,Z,s,d,\psi)=\lim\limits_{N\to \infty}m_{N,\epsilon}(f,Z,s,d,\psi)$$
exists since $m_{N,\epsilon}(f,Z,s,d,\psi)$ is non-increasing when $N$ increases.
 $m_{\epsilon}(f,Z,s,d,\psi)$ has a critical value of parameter $s$ jumping from $\infty$ to $0$ and which is defined by
\begin{align*}
M_{\epsilon}(f,Z,d,\psi)&:=\inf\{s:m_{\epsilon}(f,Z,s,d,\psi)=0\}\\
&:=\sup\{s:m_{\epsilon}(f,Z,s,d,\psi)=\infty\}.
\end{align*}
The Bowen upper metric mean dimension of $f$ on $Z$ with potential $\psi$ is then defined as the following limit.
$$\overline{\rm{midm}}_M^B(f,Z,d,\psi)=\limsup\limits_{\epsilon \to 0}\frac{M_{\epsilon}(f,Z,d,\psi)}{\log\frac{1}{\epsilon}}.$$
Similarly, the  Bowen lower metric mean dimension of $f$ on $Z$ with potential $\psi$ is defined as 
$$\underline{\rm{midm}}_M^B(f,Z,d,\psi)=\liminf\limits_{\epsilon \to 0}\frac{M_{\epsilon}(f,Z,d,\psi)}{\log\frac{1}{\epsilon}}.$$
When $(X,d,f)$ is a $\rm{TDS}$ ,$\psi=0$ and take $Z=X$, $\overline{\rm{midm}}_M^B(f,X,d,\psi)$ is equal to the usual metric mean  introduced by Lindenstrauss and Weiss \cite{lw00}. 

Now, we give some properties for Bowen upper (lower) metric mean dimension of $f$ on $Z$ with potential $\psi$ and their intermediate quantities.
\begin{prop}
	For any $\psi\in C(X,\mathbb{R})$, $c\in \mathbb R$ and finite or countable covers $\{B_{n_i}(x_i,\epsilon)\}_{i\in I}$ of $Z$ with $n_i \geq N$, we have 
	$$\overline{\rm{midm}}_M^B(f,Z,d,\psi+c)=\overline{\rm{midm}}_M^B(f,Z,d,\psi)+c.$$
\end{prop}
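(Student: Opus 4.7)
The plan is to reduce the identity to an elementary translation identity on the auxiliary quantities $m_{N,\epsilon}$ and $M_\epsilon$, and then take the $\limsup$ at the end. The key observation is that replacing $\psi$ by $\psi+c$ simply adds $cn_i$ to the Birkhoff sum $S_{n_i}\psi(x_i)$, which combines cleanly with the $-sn_i$ term.

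First I would fix $Z\subset X$, $N\in\N$, $0<\epsilon<1$, $s\in\R$, and compute, for any cover $\{B_{n_i}(x_i,\epsilon)\}_{i\in I}$ of $Z$ with $n_i\geq N$, the identity
\[
-sn_i + S_{n_i}(\psi+c)(x_i)\cdot\log\tfrac{1}{\epsilon}
= -\bigl(s - c\log\tfrac{1}{\epsilon}\bigr)n_i + S_{n_i}\psi(x_i)\cdot\log\tfrac{1}{\epsilon},
\]
using $S_{n_i}(\psi+c)(x_i)=S_{n_i}\psi(x_i)+cn_i$. Taking the infimum over all such covers gives
\[
m_{N,\epsilon}(f,Z,s,d,\psi+c) = m_{N,\epsilon}\bigl(f,Z,\ s-c\log\tfrac{1}{\epsilon},\ d,\psi\bigr),
\]
and then letting $N\to\infty$ yields the same identity for $m_\epsilon$.

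From this, the critical parameter shifts exactly by $c\log\tfrac{1}{\epsilon}$: indeed, $m_\epsilon(f,Z,s,d,\psi+c)=0$ iff $m_\epsilon(f,Z,s-c\log\tfrac{1}{\epsilon},d,\psi)=0$, so taking the infimum in the definition of $M_\epsilon$ gives
\[
M_\epsilon(f,Z,d,\psi+c) = M_\epsilon(f,Z,d,\psi) + c\log\tfrac{1}{\epsilon}.
\]
Dividing by $\log\tfrac{1}{\epsilon}$ and passing to $\limsup_{\epsilon\to 0}$ then yields the claim, since the additive constant $c$ is unaffected by the limit.

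There is no real obstacle here — everything comes from the linearity of the Birkhoff sum in the potential and the fact that the weight $\log\tfrac{1}{\epsilon}$ multiplying the potential is independent of the cover. The only point that warrants a line of care is checking that the change of the parameter $s\mapsto s-c\log\tfrac{1}{\epsilon}$ is a bijection of $\R$ (trivially true since $\log\tfrac{1}{\epsilon}$ is a fixed finite number for each $\epsilon\in(0,1)$), so that the infimum defining $M_\epsilon$ transforms exactly as stated. The analogous argument applies verbatim with $\liminf$ in place of $\limsup$, yielding the same translation identity for the lower version.
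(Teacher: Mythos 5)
Your proof is correct and follows essentially the same route as the paper: rewrite the exponent to absorb the constant $c$ into a shift $s\mapsto s-c\log\tfrac{1}{\epsilon}$, deduce $M_\epsilon(f,Z,d,\psi+c)=M_\epsilon(f,Z,d,\psi)+c\log\tfrac{1}{\epsilon}$, divide by $\log\tfrac{1}{\epsilon}$ and take $\limsup_{\epsilon\to 0}$. Your added remark that the map $s\mapsto s-c\log\tfrac{1}{\epsilon}$ is a bijection is implicit in the paper's substitution $s'=s-c\log\tfrac{1}{\epsilon}$, so there is no substantive difference.
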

\begin{proof}
For any $\psi\in C(X,\mathbb{R})$, 
$$\sum_{i\in I}\exp\left(-sn_i+S_{n_i}(\psi+c)(x_i)\Big(\log\frac{1}{\epsilon}\Big)\right)=\sum_{i\in I}\exp\left(-(s-c\log\frac{1}{\epsilon})n_i+S_{n_i}\psi(x_i)\Big(\log\frac{1}{\epsilon}\Big)\right).$$
Then, $m_{N,\epsilon}(f,Z, s, d,\psi+c)=m_{N,\epsilon}(f,Z, s-c\log\frac{1}{\epsilon}, d,\psi)$. Futhermore, we can obtain that  
\begin{align*}
M_{\epsilon}(f, Z, d, \psi+c  )&=\inf\{s:m_{\epsilon}(f,Z,s,d,\psi+c)=0\}\\
&=\inf\{s:m_{\epsilon}(f,Z,s-c\log\frac{1}{\epsilon},d,\psi)=0\}\\
&=\inf\{s'+c\log\frac{1}{\epsilon}:m_{\epsilon}(f,Z,s',d,\psi)=0\}\\
&=M_{\epsilon}(f, Z, d, \psi  )+c\log\frac{1}{\epsilon}.
\end{align*}
It follows that $\overline{\rm{midm}}_M^B(f,Z,d,\psi+c)=\overline{\rm{midm}}_M^B(f,Z,d,\psi)+c$.
\end{proof}
For any $\psi\in C(X,\mathbb{R})$, by the compactness of $X$, there exists a constant $c$ such that $f+c>0$. Therefore, we only need to focus on the Bowen metric mean dimension with potential that have positive values and $\psi$ be a non-negative function.

\begin{prop}
	\label{lem:2.1}
	If $\{A_n\}_{n=1}^{\infty}$ is a countable family of subsets of $X$, then
	$$M_{\epsilon}\left(f,\bigcup_{n=1}^{\infty}A_n,d,\psi\right)= \sup_{n\in\mathbb{N}}M_{\epsilon}(f,A_n,f,d,\psi).$$
\end{prop}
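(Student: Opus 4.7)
The plan is to prove the two inequalities separately, exploiting the Carathéodory-type construction of $M_\epsilon$ from $m_{N,\epsilon}$.

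For the inequality $\sup_n M_\epsilon(f, A_n, d, \psi) \le M_\epsilon\left(f, \bigcup_n A_n, d, \psi\right)$, the point is monotonicity. If $Z \subset Z'$, then every countable cover of $Z'$ by Bowen balls $\{B_{n_i}(x_i,\epsilon)\}$ with $n_i \ge N$ is also a cover of $Z$, so taking infima yields $m_{N,\epsilon}(f,Z,s,d,\psi) \le m_{N,\epsilon}(f,Z',s,d,\psi)$. Passing to the limit $N\to\infty$ gives the same inequality for $m_\epsilon$, and therefore $M_\epsilon(f,Z,d,\psi) \le M_\epsilon(f,Z',d,\psi)$. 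Applied with $Z = A_n$ and $Z' = \bigcup_n A_n$ and then taking the supremum over $n$, this direction follows.

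For the reverse inequality, I would use the standard $\delta/2^n$-argument. Fix any real $s$ strictly larger than $\sup_n M_\epsilon(f, A_n, d, \psi)$. Then for every $n$, the definition of $M_\epsilon$ gives $m_\epsilon(f, A_n, s, d, \psi) = 0$, and hence, by monotonicity of $m_{N,\epsilon}$ in $N$, also $m_{N,\epsilon}(f, A_n, s, d, \psi) = 0$ for every $N$. Given $\delta > 0$ and $N \in \mathbb{N}$, for each $n$ I can pick a countable cover $\{B_{n_i^{(n)}}(x_i^{(n)}, \epsilon)\}_{i \in I_n}$ of $A_n$ with $n_i^{(n)} \ge N$ and
\[
\sum_{i \in I_n} \exp\!\left(-s\, n_i^{(n)} + S_{n_i^{(n)}}\psi(x_i^{(n)}) \cdot \log\tfrac{1}{\epsilon}\right) < \frac{\delta}{2^n}.
\]
Taking the countable union of these covers produces a cover of $\bigcup_n A_n$ whose total sum is less than $\delta$, proving $m_{N,\epsilon}\!\left(f,\bigcup_n A_n, s, d,\psi\right) < \delta$. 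Since $\delta$ and $N$ were arbitrary, $m_\epsilon\!\left(f,\bigcup_n A_n,s,d,\psi\right) = 0$, whence $M_\epsilon\!\left(f,\bigcup_n A_n,d,\psi\right) \le s$. Letting $s$ decrease to $\sup_n M_\epsilon(f, A_n, d, \psi)$ gives the required bound.

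The only subtle step is the countable subadditivity in the second part: one has to be careful to choose the approximating covers at the same scale $N$ for all $n$, which is precisely what the monotonicity of $m_{N,\epsilon}$ in $N$ guarantees from the hypothesis $m_\epsilon(f,A_n,s,d,\psi)=0$. Everything else is a formal manipulation of the Carathéodory construction, and no additional properties of $\psi$ or $f$ (beyond those already fixed in the setup) are needed.
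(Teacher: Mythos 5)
Your proof is correct and follows essentially the same route as the paper's: monotonicity in the set for the inequality $\sup_n M_\epsilon(f,A_n,d,\psi)\le M_\epsilon(f,\bigcup_n A_n,d,\psi)$, and the $\delta/2^n$ cover-combination argument for the reverse. The only cosmetic difference is that the paper first records the countable subadditivity $m_\epsilon\bigl(f,\bigcup_n A_n,s,d,\psi\bigr)\le\sum_n m_\epsilon(f,A_n,s,d,\psi)$ as an intermediate step and then lets $t\downarrow\sup_n M_\epsilon(f,A_n,d,\psi)$, whereas you fix $s$ up front and conclude $m_{N,\epsilon}(f,A_n,s,d,\psi)=0$ for every $N$ (valid since $m_{N,\epsilon}$ is non-negative and non-decreasing in $N$); both rest on the same bookkeeping.
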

\begin{proof}
	It is obvious that $m_{\epsilon}(f,B,s,d,\psi)\leq m_{\epsilon}(f,A,s,d,\psi)$ for every $B\subset A\subset X.$ Therefore, $M_{\epsilon}(f,B,d,\psi)\leq M_{\epsilon}(f,A,d,\psi).$ This means $M_{\epsilon}\left(f,\bigcup_{n=1}^{\infty}A_n,d,\psi\right)\ge \sup_{n\in\mathbb{N}}M_{\epsilon}(f,A_n,f,d,\psi).$ For the another inequality, let $\delta>0$ and $\{C_{n,i}\}_{i=1}^{\infty}$ is a ball cover of $A_n$ such that
	$$\sum_{i=1}^{\infty}\exp\left(-sn_i+S_{n_i}\psi(x_i)\Big(\log\frac{1}{\epsilon}\Big)\right)\leq m_{N,\epsilon}(f,A_n,s,d,\psi)+\frac{\delta}{2^n}.$$
	Then $\{C_{n,i}\}_{n,i=1}^{\infty}$ is a ball cover of $\bigcup_{n=1}^{\infty}A_n$. We have 
	\begin{align*}
		m_{N,\epsilon}\left(f,\bigcup_{n=1}^{\infty}A_n,s,d,\psi\right)&\leq\sum_{n=1}^{\infty}\sum_{i=1}^{\infty}\exp\left(-sn_i+S_{n_i}\psi(x_i)\Big(\log\frac{1}{\epsilon}\Big)\right)\\
		&\leq\sum_{n=1}^{\infty}\left(m_{N,\epsilon}(f,A_n,s,d,\psi)+\frac{\delta}{2^n}\right)\\
		&=\sum_{n=1}^{\infty}m_{N,\epsilon}(f,A_n,s,d,\psi)+\delta.
	\end{align*}
Since this holds for any $\delta >0$, letting $\delta\to0,\ N\to\infty$, we have 
$$m_{\epsilon}\left(f,\bigcup_{n=1}^{\infty}A_n,s,d,\psi\right)\leq \sum_{n=1}^{\infty}m_{\epsilon}(f,A_n,s,d,\psi).$$
Let $s:=\sup_{n\in\mathbb{N}}M_{\epsilon}(f,A_n,d,\psi)$, taking $s<t$. Then $M_{\epsilon}(f,A_n,d,\psi)\leq s<t$ for all $n\in\mathbb{N}.$ Hence, $m_{\epsilon}(f,A_n,t,d,\psi)=0$ for all $n\in\mathbb{N}$. Then 
$$m_{\epsilon}\left(f,\bigcup_{n=1}^{\infty}A_n,t,d,\psi\right)\leq \sum_{n=1}^{\infty}m_{\epsilon}(f,A_n,t,d,\psi)=0.$$
Thus, $M_{\epsilon}(f,\bigcup_{i=1}^{\infty}A_n,d,\psi)\leq t$ for every $t> s$. Letting $t\to s$, we have
$$M_{\epsilon}\left(f,\bigcup_{n=1}^{\infty}A_n,f,\psi\right)\leq \sup_{n\in\mathbb{N}}M_{\epsilon}(f,A_n,d,\psi).$$
\end{proof}

\subsection{Packing metric mean dimension with potential for  subsets}
Given a set $Z\subset X, N\in \mathbb N, 0<\epsilon<1,s\in \mathbb R,$ and a potential $\psi\in C(X,\mathbb{R})$, we consider 
$$P_{N,\epsilon}(f,Z,s,d,\psi)=\sup\left\{\sum_{i\in I}\exp\left(-sn_i+S_{n_i}\psi(x_i)\Big(\log\frac{1}{\epsilon}\Big)\right)\right\},$$
where the supremum is taken over all  finite or countable  pairwise disjoint  closed  families $\{\overline B_{n_i}(x_i,\epsilon)\}_{i\in I}$ of $Z$ with $n_i \geq N, x_i\in Z$ for all $i\in I$.
Obviously, the limit 
$$P_{\epsilon}(f,Z,s,d,\psi)=\lim\limits_{N\to \infty}P_{N,\epsilon}(f,Z,s,d,\psi)$$
exists since $P_{N,\epsilon}(f,Z,s,d,\psi)$ is non-increasing when $N$ increases.
Set$$\mathcal P_{\epsilon}(f,Z,s,d,\psi)
=\inf\left\{\sum_{i=1}^{\infty}P_{\epsilon}(f,Z,s,d,\psi): \cup_{i\geq 1}Z_i \supseteq Z \right\}.$$
$\mathcal P_{\epsilon}(f,Z,s,d,\psi)$ has a critical value of parameter $s$ jumping from $\infty$ to $0$ and which is defined by
\begin{align*}
\mathcal P_{\epsilon}(f,Z,d,\psi)&:=\inf\{s:\mathcal P_{\epsilon}(f,Z,s,d,\psi)=0\}\\
&:=\sup\{s:\mathcal P_{\epsilon}(f,Z,s,d,\psi)=\infty\}.
\end{align*}
The packing upper(lower) metric mean dimension of $f$ on $Z$ with potential $\psi$ are then defined as the following limits.
$$\overline{\rm{midm}}_M^P(f,Z,d,\psi)=\limsup\limits_{\epsilon \to 0}\frac{\mathcal P_{\epsilon}(f,Z,d,\psi)}{\log\frac{1}{\epsilon}},$$
$$\underline{\rm{midm}}_M^P(f,Z,d,\psi)=\liminf\limits_{\epsilon \to 0}\frac{\mathcal P_{\epsilon}(f,Z,d,\psi)}{\log\frac{1}{\epsilon}}.$$
We recall the definition of  packing upper metric mean dimension of $f$ on $Z$ and see that  $\overline{\rm{midm}}_M^P(f,Z,d)=\overline{\rm{midm}}_M^P(f,Z,d,0).$

\begin{prop}
	For any $\psi\in C(X,\mathbb{R})$, $c\in \mathbb R$ and finite or countable  pairwise disjoint  closed  families $\{\overline B_{n_i}(x_i,\epsilon)\}_{i\in I}$ of $Z$ with $n_i \geq N, x_i\in Z$ for all $i\in I$, we have 
	$$\overline{\rm{midm}}_M^P(f,Z,d,\psi+c)=\overline{\rm{midm}}_M^P(f,Z,d,\psi)+c.$$
\end{prop}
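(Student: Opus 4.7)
The plan is to mirror the argument already given for the Bowen version in the preceding proposition, with only the outer infimum over covers in the definition of $\mathcal P_{\epsilon}$ needing extra attention. The key algebraic input is the identity $S_{n_i}(\psi+c)(x_i)=S_{n_i}\psi(x_i)+cn_i$, which lets me rewrite each summand as
\begin{align*}
\exp\!\left(-sn_i+S_{n_i}(\psi+c)(x_i)\log\tfrac{1}{\epsilon}\right)
=\exp\!\left(-\bigl(s-c\log\tfrac{1}{\epsilon}\bigr)n_i+S_{n_i}\psi(x_i)\log\tfrac{1}{\epsilon}\right).
\end{align*}
Since the supremum in the definition of $P_{N,\epsilon}$ is taken over the same families of pairwise disjoint closed Bowen balls (which do not depend on $\psi$), this yields the shift identity
\[
P_{N,\epsilon}(f,Z,s,d,\psi+c)=P_{N,\epsilon}\!\left(f,Z,s-c\log\tfrac{1}{\epsilon},d,\psi\right).
\]

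Next I pass this identity through the two remaining operations in the definition of $\mathcal P_{\epsilon}(f,Z,d,\psi)$. Taking $N\to\infty$ is immediate and gives the same shift for $P_{\epsilon}$. For the outer infimum, note that a countable cover $\bigcup_i Z_i\supseteq Z$ does not depend on $\psi$ either, so I can apply the $P_{\epsilon}$-shift term by term inside the sum, obtaining
\[
\mathcal P_{\epsilon}(f,Z,s,d,\psi+c)=\mathcal P_{\epsilon}\!\left(f,Z,s-c\log\tfrac{1}{\epsilon},d,\psi\right).
\]
Then exactly as in the Bowen case I compute the critical exponent:
\begin{align*}
\mathcal P_{\epsilon}(f,Z,d,\psi+c)
&=\inf\{s:\mathcal P_{\epsilon}(f,Z,s,d,\psi+c)=0\}\\
&=\inf\{s:\mathcal P_{\epsilon}(f,Z,s-c\log\tfrac{1}{\epsilon},d,\psi)=0\}\\
&=\mathcal P_{\epsilon}(f,Z,d,\psi)+c\log\tfrac{1}{\epsilon}.
\end{align*}

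Dividing by $\log\tfrac{1}{\epsilon}$ and taking $\limsup_{\epsilon\to 0}$ gives the desired equality for $\overline{\rm{midm}}_M^P$ (and the same argument with $\liminf$ handles the lower version, should one need it). The only step requiring any care is verifying that the shift identity survives the outer infimum over covers $\{Z_i\}$; this is routine because the class of admissible covers is independent of the potential, so the shift is applied uniformly inside the sum. There is no real obstacle here — the whole argument is a transcription of the Bowen proof with ``inf over ball covers'' replaced by ``sup over disjoint closed ball packings, followed by inf over covers,'' and the extra infimum does not interact with the potential at all.
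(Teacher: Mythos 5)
Your proof is correct and follows essentially the same route as the paper: the algebraic shift identity for $P_{N,\epsilon}$, passage to $P_{\epsilon}$ and then $\mathcal P_{\epsilon}$, the computation of the critical exponent $\mathcal P_{\epsilon}(f,Z,d,\psi+c)=\mathcal P_{\epsilon}(f,Z,d,\psi)+c\log\frac{1}{\epsilon}$, and division by $\log\frac{1}{\epsilon}$. Your explicit remark that the outer infimum over covers $\{Z_i\}$ is insensitive to the potential, so the shift can be applied termwise, is a point the paper leaves implicit but is exactly the right justification.
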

\begin{proof}
For any $\psi\in C(X,\mathbb{R})$, 
$$\sum_{i\in I}\exp\left(-sn_i+S_{n_i}(\psi+c)(x_i)\Big(\log\frac{1}{\epsilon}\Big)\right)=\sum_{i\in I}\exp\left(-(s-c\log\frac{1}{\epsilon})n_i+S_{n_i}\psi(x_i)\Big(\log\frac{1}{\epsilon}\Big)\right).$$
Then, $P_{N,\epsilon}(f,Z, s, d,\psi+c)=P_{N,\epsilon}(f,Z, s-c\log\frac{1}{\epsilon}, d,\psi)$. 
Futhermore, we can obtain
$$P_{\epsilon}(f,Z, s, d,\psi+c)=P_{\epsilon}(f,Z, s-c\log\frac{1}{\epsilon}, d,\psi),$$
$$\mathcal P_{\epsilon}(f,Z, s, d,\psi+c)=\mathcal P_{\epsilon}(f,Z, s-c\log\frac{1}{\epsilon}, d,\psi).$$
Then,
\begin{align*}
\mathcal P_{\epsilon}(f, Z, d, \psi+c  )&=\inf\{s:\mathcal P_{\epsilon}(f,Z,s,d,\psi+c)=0\}\\
&=\inf\{s:\mathcal P_{\epsilon}(f,Z,s-c\log\frac{1}{\epsilon},d,\psi)=0\}\\
&=\inf\{s'+c\log\frac{1}{\epsilon}:\mathcal P_{\epsilon}(f,Z,s',d,\psi)=0\}\\
&=\mathcal P_{\epsilon}(f, Z, d, \psi  )+c\log\frac{1}{\epsilon}.
\end{align*}
It follows that $\overline{\rm{midm}}_M^P(f,Z,d,\psi+c)=\overline{\rm{midm}}_M^P(f,Z,d,\psi)+c$.
\end{proof}
For any $\psi\in C(X,\mathbb{R})$, by the compactness of $X$, there exists a constant $c$ such that $f+c>0$. Therefore, we only need to focus on the packing metric mean dimension with potential that have positive values and $\psi$ be a non-negative function.

\begin{prop}\label{prop A}
	If $\{A_n\}_{n=1}^{\infty}$ is a countable family of subsets of $X$, then
	$$\mathcal P_{\epsilon}\left(f,\bigcup_{n=1}^{\infty}A_n,d,\psi\right)= \sup_{n\in\mathbb{N}}\mathcal P_{\epsilon}(f,A_n,f,d,\psi).$$
\end{prop}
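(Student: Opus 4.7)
The plan is to run the standard Carathéodory construction argument, in direct analogy with the proof of Proposition~\ref{lem:2.1}. The two inputs needed are (i) monotonicity of the outer set function $\mathcal P_\epsilon(f,\cdot,s,d,\psi)$, and (ii) its countable subadditivity; the conclusion then follows by reading off the critical value at which $\mathcal P_\epsilon(f,\cdot,s,d,\psi)$ jumps from $\infty$ to $0$. Note that although the inner quantity $P_\epsilon$ is \emph{not} monotone in the set variable (since packings must be centered inside the set), the outer infimum over covers restores monotonicity, which is the only formal property I will use.

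First I would establish monotonicity. If $Z\subseteq W$, then every countable cover $\{Z_i\}_{i\ge 1}$ of $W$ is automatically a cover of $Z$, so the admissible family in the definition of $\mathcal P_\epsilon(f,Z,s,d,\psi)$ contains that of $\mathcal P_\epsilon(f,W,s,d,\psi)$. Taking infima gives $\mathcal P_\epsilon(f,Z,s,d,\psi)\le\mathcal P_\epsilon(f,W,s,d,\psi)$, and passing to critical values yields
$$\sup_{n\in\mathbb N}\mathcal P_\epsilon(f,A_n,d,\psi)\le\mathcal P_\epsilon\left(f,\bigcup_{n=1}^\infty A_n,d,\psi\right),$$
which is the easy ``$\ge$'' direction of the statement.

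Next I would prove countable subadditivity at a fixed parameter $s$. Given $\delta>0$, use the definition of $\mathcal P_\epsilon(f,A_n,s,d,\psi)$ as an infimum to pick, for each $n$, a countable cover $\{Z_{n,i}\}_{i\ge 1}$ of $A_n$ with
$$\sum_{i\ge 1}P_\epsilon(f,Z_{n,i},s,d,\psi)\le \mathcal P_\epsilon(f,A_n,s,d,\psi)+\frac{\delta}{2^n}.$$
The reindexed family $\{Z_{n,i}\}_{n,i\ge 1}$ is a countable cover of $\bigcup_n A_n$, so
$$\mathcal P_\epsilon\left(f,\bigcup_{n=1}^\infty A_n,s,d,\psi\right)\le\sum_{n,i}P_\epsilon(f,Z_{n,i},s,d,\psi)\le\sum_{n=1}^\infty\mathcal P_\epsilon(f,A_n,s,d,\psi)+\delta,$$
and letting $\delta\to 0$ gives $\mathcal P_\epsilon\bigl(f,\bigcup_n A_n,s,d,\psi\bigr)\le\sum_n\mathcal P_\epsilon(f,A_n,s,d,\psi)$.

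Finally I would extract critical values exactly as in Proposition~\ref{lem:2.1}. Set $s_\ast:=\sup_{n\in\mathbb N}\mathcal P_\epsilon(f,A_n,d,\psi)$ and pick any $t>s_\ast$. Then $\mathcal P_\epsilon(f,A_n,t,d,\psi)=0$ for every $n$, so the subadditivity inequality forces $\mathcal P_\epsilon\bigl(f,\bigcup_n A_n,t,d,\psi\bigr)=0$, whence $\mathcal P_\epsilon\bigl(f,\bigcup_n A_n,d,\psi\bigr)\le t$. Letting $t\downarrow s_\ast$ gives the reverse inequality and closes the argument. There is no serious obstacle here; the only point to check carefully is that a near-optimal cover achieving the $\delta/2^n$-tolerance exists, which is immediate from the definition of $\mathcal P_\epsilon$ as an infimum, so the entire argument is essentially a verbatim repetition of the Bowen case.
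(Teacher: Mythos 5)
Your argument is correct and follows the same route as the paper's proof: monotonicity of $\mathcal P_\epsilon$, countable subadditivity via near-optimal covers with $\delta/2^n$ tolerances, then extraction of the critical value exactly as in Proposition~\ref{lem:2.1}. One minor inaccuracy in your side remark: $P_\epsilon$ actually \emph{is} monotone in the set variable (a packing of $Z$ centered in $Z\subseteq W$ is automatically a packing of $W$), but as you observe this is immaterial since only monotonicity of the outer quantity $\mathcal P_\epsilon$ is used.
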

\begin{proof}
	It is obvious that $\mathcal P_{\epsilon}(f,B,s,d,\psi)\leq \mathcal P_{\epsilon}(f,A,s,d,\psi)$ for every $B\subset A\subset X.$ Therefore, $\mathcal P_{\epsilon}(f,B,d,\psi)\leq \mathcal P_{\epsilon}(f,A,d,\psi).$ This means $\mathcal P_{\epsilon}\left(f,\bigcup_{n=1}^{\infty}A_n,d,\psi\right)\ge \sup_{n\in\mathbb{N}}\mathcal P_{\epsilon}(f,A_n,f,d,\psi).$ For the another inequality, let $\delta>0$ and for each $n\in\mathbb N$ choose a countable cover $\{C_{n,i}\}_{i=1}^{\infty}$  of $A_n$ such that
	$$\sum_{i=1}^{\infty}P_{\epsilon}(f,C_{n,i},s,d,\psi)\leq \mathcal P_{\epsilon}(f,A_n,s,d,\psi)+\frac{\delta}{2^n}.$$
	Then $\{C_{n,i}\}_{n,i=1}^{\infty}$ is a countable cover of $\bigcup_{n=1}^{\infty}A_n$. We have 
	\begin{align*}
		\mathcal P_{\epsilon}(f,A,s,d,\psi)&\leq\sum_{n=1}^{\infty}\sum_{i=1}^{\infty}P_{\epsilon}(f,C_{n,i},s,d,\psi)\\
		&\leq\sum_{n=1}^{\infty}\mathcal P_{\epsilon}(f,A_n,s,d,\psi)+\frac{\delta}{2^n}\\
		&=\sum_{n=1}^{\infty}\mathcal P_{\epsilon}(f,A_n,s,d,\psi)+\delta.
	\end{align*}
Since this holds for any $\delta >0$, letting $\delta\to0$, we have 
$$\mathcal P_{\epsilon}(f,A,s,d,\psi)\leq\sum_{n=1}^{\infty}\mathcal P_{\epsilon}(f,A_n,s,d,\psi).$$
Let $s:=\sup_{n\in\mathbb{N}}\mathcal P_{\epsilon}(f,A_n,d,\psi)$, taking $s<t$. Then $\mathcal P_{\epsilon}(f,A_n,d,\psi)\leq s<t$ for all $n\in\mathbb{N}.$ Hence, $\mathcal P_{\epsilon}(f,A_n,t,d,\psi)=0$ for all $n\in\mathbb{N}$. Then 
$$\mathcal P_{\epsilon}\left(f,\bigcup_{n=1}^{\infty}A_n,t,d,\psi\right)\leq \sum_{n=1}^{\infty}\mathcal P_{\epsilon}(f,A_n,t,d,\psi)=0.$$
Thus, $\mathcal P_{\epsilon}(f,\bigcup_{i=1}^{\infty}A_n,d,\psi)\leq t$ for every $t> s$. Letting $t\to s$, we have
$$\mathcal P_{\epsilon}\left(f,\bigcup_{n=1}^{\infty}A_n,f,\psi\right)\leq \sup_{n\in\mathbb{N}}\mathcal P_{\epsilon}(f,A_n,d,\psi).$$
\end{proof}

\subsection{The Auxiliary Quantities $\Lambda_{\varphi}\overline{\rm{midm}}_M(f,K_\alpha,\psi)$ and \newline
$\Lambda_{\varphi}\underline{\rm{midm}}_M(f,K_\alpha,\psi)$} $\newline$

Let $C(X,\mathbb{R})$ denote the set of all continuous functions. Take $\varphi \in C(X,\mathbb{R})$. For $\alpha \in \mathbb{R}$, let
$$K_{\alpha}:=\left\{x\in X:\lim\limits_{n\to \infty}\frac{1}{n}\sum_{j=0}^{n-1}\varphi(f^jx)=\alpha\right\}.$$
We pay attention to consider the set 
$\mathcal{L_{\varphi}}=\{\alpha \in \mathbb{R}:K_{\alpha}\ne \emptyset\}. $
Let $\delta>0,$ $\alpha\in \mathcal L_\varphi$ and $n \in \mathbb{N}$. Set
\begin{align*}
 P(\alpha,\delta,n):&=\left\{x\in X: \left| \frac{1}{n}\sum_{j=0}^{n-1}\varphi(f^j(x))-\alpha \right| <\delta\right\},\\
 N(\alpha,\delta,n,\epsilon,\psi):
 &=\inf\Big\{\sum_{x \in F}^{}\exp\left(S_n\psi(x)\Big(\log\frac{1}{\epsilon}\Big)\right):\\ &F\text{ is  an } (n,\epsilon)\text{ spanning  set  of  } P(\alpha,\delta,n)\Big\},\\
 M(\alpha,\delta,n,\epsilon,\psi):
&=\sup\Big\{\sum_{x \in E}^{}\exp\left(S_n\psi(x)\Big(\log\frac{1}{\epsilon}\Big)\right):\\ &E\text{ is  an } (n,\epsilon)\text{ separated  set  of  } P(\alpha,\delta,n)\Big\}.
\end{align*}
and  we define
$$\Lambda_{\varphi}^{\psi}(\alpha,\epsilon):=\lim\limits_{\delta \to 0}\liminf\limits_{n \to \infty}\frac{1}{n} \log N(\alpha,\delta,n,\epsilon,\psi),$$
$$\Lambda_{\varphi}\overline{\rm{midm}}_M(f,K_{\alpha},d,\psi):=\limsup\limits_{\epsilon \to 0}\frac{\Lambda_{\varphi}^{\psi}(\alpha,\epsilon)}{\logf},$$
$$\Lambda_{\varphi}\underline{\rm{midm}}_M(f,K_{\alpha},d,\psi):=\liminf\limits_{\epsilon \to 0}\frac{\Lambda_{\varphi}^{\psi}(\alpha,\epsilon)}{\log\frac{1}{\epsilon}}.$$
When $(X,d,f)$ is a $\rm{TDS}$, $\psi=0$, $\Lambda_{\varphi}\overline{\rm{midm}}_M(f,K_{\alpha},d,\psi)$ is equal to $\Lambda_{\varphi}\overline{\rm{midm}}_M(f,\alpha,d)$  introduced by Backes and Rodrigues \cite{br23}. 
\begin{prop} 
	\label{prop:2.2}
	Let $n\in\mathbb N, 0<\epsilon<1.$ Then there exists $C>0$ such that
$$N(\alpha,\delta,n,\epsilon,\psi) \leq M(\alpha,\delta,n,\epsilon,\psi) \leq N(\alpha,\delta,n,\epsilon/2,\psi)\cdot\exp(Cn).$$
\end{prop}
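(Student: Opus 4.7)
The plan is to adapt the classical comparison between $(n,\epsilon)$-spanning and $(n,\epsilon)$-separated sets to the weighted sums $\sum_{x}\exp\!\left(S_n\psi(x)\log\frac{1}{\epsilon}\right)$ that define $N$ and $M$. The left inequality will follow from the observation that a maximal $(n,\epsilon)$-separated subset of $P(\alpha,\delta,n)$ is automatically $(n,\epsilon)$-spanning; the right inequality will follow from an injection from a separated set at scale $\epsilon$ into a spanning set at scale $\epsilon/2$, with the change in weight controlled by the modulus of continuity of $\psi$.

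For the left inequality, I would take a maximal $(n,\epsilon)$-separated subset $E\subset P(\alpha,\delta,n)$. If some $y\in P(\alpha,\delta,n)$ satisfied $d_n(y,x)\ge\epsilon$ for every $x\in E$, then $E\cup\{y\}$ would still be separated, contradicting maximality; hence $E$ is $(n,\epsilon)$-spanning for $P(\alpha,\delta,n)$. The same $E$ is therefore admissible both in the infimum defining $N(\alpha,\delta,n,\epsilon,\psi)$ and in the supremum defining $M(\alpha,\delta,n,\epsilon,\psi)$, so
$$N(\alpha,\delta,n,\epsilon,\psi)\le \sum_{x\in E}\exp\!\left(S_n\psi(x)\log\frac{1}{\epsilon}\right)\le M(\alpha,\delta,n,\epsilon,\psi).$$

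For the right inequality, I would fix an $(n,\epsilon)$-separated set $E\subset P(\alpha,\delta,n)$ and an $(n,\epsilon/2)$-spanning set $F$ of $P(\alpha,\delta,n)$, and choose a map $\phi:E\to F$ with $d_n(x,\phi(x))<\epsilon/2$ for each $x\in E$. The map $\phi$ is injective, since $\phi(x)=\phi(y)$ with $x\ne y$ would force $d_n(x,y)<\epsilon$ by the triangle inequality, contradicting separation. As $d(f^j x, f^j\phi(x))<\epsilon/2$ for every $0\le j<n$, we get $|S_n\psi(x)-S_n\psi(\phi(x))|\le n\,\mathrm{Var}(\psi,\epsilon/2)$, so with $C:=\mathrm{Var}(\psi,\epsilon/2)\log\frac{1}{\epsilon}$ (finite by compactness of $X$ and continuity of $\psi$, depending on $\epsilon$ and $\psi$ but not on $n$), each $x\in E$ satisfies $\exp\!\left(S_n\psi(x)\log\frac{1}{\epsilon}\right)\le e^{Cn}\exp\!\left(S_n\psi(\phi(x))\log\frac{1}{\epsilon}\right)$. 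Summing over $E$ and using the injectivity of $\phi$ yields
$$\sum_{x\in E}\exp\!\left(S_n\psi(x)\log\frac{1}{\epsilon}\right)\le e^{Cn}\sum_{y\in F}\exp\!\left(S_n\psi(y)\log\frac{1}{\epsilon}\right),$$
and passing to the supremum over $E$ and the infimum over $F$ gives the claim. There is no real obstacle here: this is the standard Bowen/Walters comparison, the only new ingredient being the uniform factor $\log\frac{1}{\epsilon}$ multiplying $S_n\psi$, which is absorbed into $C$. The only care needed is that all spanning and separated sets are taken relative to $P(\alpha,\delta,n)$ rather than all of $X$, so the maximality argument in the first inequality must be carried out inside $P(\alpha,\delta,n)$.
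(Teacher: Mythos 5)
Your overall approach is the same as the paper's: a maximal separated set is spanning (left inequality), and an injection from a separated set at scale $\epsilon$ into a spanning set at scale $\epsilon/2$ controlled by $\mathrm{Var}(\psi,\epsilon/2)$ (right inequality). The left inequality and the injectivity argument match the paper exactly.

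There is, however, a bookkeeping slip in the right inequality that you did not address. You prove
$$\sum_{x\in E}\exp\Bigl(S_n\psi(x)\log\tfrac{1}{\epsilon}\Bigr)\le e^{Cn}\sum_{y\in F}\exp\Bigl(S_n\psi(y)\log\tfrac{1}{\epsilon}\Bigr),$$
but the quantity $N(\alpha,\delta,n,\epsilon/2,\psi)$ is, by definition, the infimum over $(n,\epsilon/2)$-spanning sets $F$ of $\sum_{y\in F}\exp\bigl(S_n\psi(y)\log\tfrac{1}{\epsilon/2}\bigr)=\sum_{y\in F}\exp\bigl(S_n\psi(y)\log\tfrac{2}{\epsilon}\bigr)$, which carries $\log\tfrac{2}{\epsilon}$ rather than $\log\tfrac{1}{\epsilon}$. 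Passing to $\inf_F$ in your display does not directly give $N(\alpha,\delta,n,\epsilon/2,\psi)$. Under the paper's standing reduction to $\psi\ge 0$, the mismatch is harmless because $\exp\bigl(S_n\psi(y)\log\tfrac{1}{\epsilon}\bigr)\le\exp\bigl(S_n\psi(y)\log\tfrac{2}{\epsilon}\bigr)$ termwise; but since the proposition as stated allows signed $\psi$, the clean fix (which is what the paper does) is to absorb an extra factor into the constant, namely take $C=\|\psi\|\log 2+\mathrm{Var}(\psi,\epsilon/2)\log\tfrac{1}{\epsilon}$, so that $\exp\bigl(-n\|\psi\|\log 2\bigr)\exp\bigl(S_n\psi(y)\log\tfrac{1}{\epsilon}\bigr)\le\exp\bigl(S_n\psi(y)\log\tfrac{2}{\epsilon}\bigr)$ regardless of sign. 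With that adjustment, your argument is complete and coincides with the paper's.
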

\begin{proof}
	If $E$ is an $(n,\epsilon)$-separated set of the maximum cardinality of $P(\alpha,\delta,n)$, then   $P(\alpha,\delta,n)\subset\bigcup_{x\in E}B_n(x,\epsilon).$  Thus, 
	$$N(\alpha,\delta,n,\epsilon,\psi) \leq M(\alpha,\delta,n,\epsilon,\psi).$$
Next, we turn to the second  inequality. Let $F$ be  an $(n,\epsilon/2)$ ball cover of $P(\alpha,\delta,n)$. Define $\Phi:E\to F$ by choosing for each $x\in E$, some point $\Phi(x)\in F$ with $x\in B_n(\Phi(x),\epsilon/2).$ Thus, $\Phi$ is injective otherwise it will contradicts with the fact that $E$ is an $(n,\epsilon)$-separated set. Moreover,
	\begin{align*}
&\sum_{y \in F}^{}\exp\left(S_n\psi(y)\cdot\log{\frac{2}{\epsilon}}\right)\\\ge&\sum_{y \in \Phi(E)}^{}\exp\left(S_n\psi(y)\cdot\log{\frac{2}{\epsilon}}\right)\\
=&\sum_{x \in E}^{}\exp\left(S_n\psi(\Phi(x))\cdot\log{\frac{2}{\epsilon}}\right)\\
	=&\sum_{x \in E}^{}\left\{\exp\left(S_n\psi(\Phi(x))\cdot\log{\frac{1}{\epsilon}}\right)\cdot \exp\left(S_n\psi(\Phi(x))\cdot\log{2}\right)\right\}\\
	\ge&\exp\left\{-n\|\psi\|\log{2}\right\}\cdot\sum_{x \in E}^{}\exp\left(S_n\psi(\Phi(x))\cdot\log{\frac{1}{\epsilon}}\right),
\end{align*}
and
\begin{align*}
	&\sum_{x \in E}^{}\exp\left(S_n\psi(\Phi(x))\cdot\log{\frac{1}{\epsilon}}\right)\\
	\ge&\exp\left\{-nVar(\psi,\epsilon/2)\Big(\log\frac{1}{\epsilon}\Big)\right\}\cdot\sum_{x \in E}^{}\exp\left(S_n\psi(x)\cdot\log{\frac{1}{\epsilon}}\right).
\end{align*}
Combining these inequalities, we have 
\begin{align*}
	&\sum_{y \in F}^{}\exp\left(S_n\psi(y)\cdot\log{\frac{2}{\epsilon}}\right)\\
	\ge&\exp\left\{-n\left(\|\psi\|\log{2}+Var(\psi,\epsilon/2)\Big(\log\frac{1}{\epsilon}\Big)\right)\right\}\cdot\sum_{x \in E}^{}\exp\left(S_n\psi(x)\cdot\log{\frac{1}{\epsilon}}\right).
	\end{align*}
This means that there exists $C>0$ satisfying
$$M(\alpha,\delta,n,\epsilon,\psi) \leq N(\alpha,\delta,n,\epsilon/2,\psi)\cdot\exp(Cn).$$
\end{proof}
 Let 
\begin{equation}\label{eq.2.1}
\Gamma_{\varphi}^{\psi}(\alpha,\epsilon):=\lim\limits_{\delta \to 0}\liminf\limits_{n \to \infty}\frac{1}{n} \log M(\alpha,\delta,n,\epsilon,\psi) .
\end{equation}
Then we have
\begin{equation}\label{eq.(2.2)}
\Lambda_{\varphi}\overline{\rm{mdim}}_M(f,K_{\alpha},d,\psi)=\limsup\limits_{\epsilon \to 0}\frac{\Gamma_{\varphi}^{\psi}(\alpha,\epsilon)}{\log\frac{1}{\epsilon}} .
\end{equation}

\subsection{The Quantities ${\rm{H}}_{\varphi}\overline{\rm{mdim}}_M(f,K_\alpha,d,\psi)$ and ${\rm{H}}_{\varphi}\underline{\rm{mdim}}_M(f,K_\alpha,d,\psi)$} $\newline$
For given  $\alpha \in \mathcal{L}_{\varphi}$ and  $\varphi \in C(X,\mathbb{R})$, we consider 
$$\mathcal{M}_f(X,\varphi,\alpha)=\left\{\mu \in \mathcal{M}_f(X)\ \text{ and  }\int\varphi \mathrm{d}\mu=\alpha\right\}.$$ 
Let $\xi=\{B_1,\dots,B_k\}$ be a finite measurable partition of $X$, the  entropy of $\xi$ with respect to $\mu$ is given by 
$$H_{\mu}(\xi)=-\sum_{i=1}^{k}\mu(B_i)\log{\mu(B_i)}.$$
Let $\xi^n=\bigvee_{j=0}^{n-1}f^{-j}\xi$. The metric entropy of $f$ with respect to $\xi$ and  $\mu$   is represented by
$$h_{\mu}(f,\xi)=\lim\limits_{n\to\infty}\frac{1}{n}H_{\mu}(\xi^n).$$
Moreover, we define 
\begin{align*}
&{\rm{H}}_{\varphi}\overline{\rm{mdim}}_M(f,K_{\alpha},d,\psi)\\
&=\limsup\limits_{\epsilon\to 0}\frac{1}{\log\frac{1}{\epsilon}}\sup_{\mu \in \mathcal{M}_f(X,\varphi,\alpha)}\left(\inf_{diam\xi<\epsilon}h_{\mu}(f,\xi)+ \Big(\log\frac{1}{\epsilon}\Big)\int\psi\mathrm{d}\mu\right),\\
&{\rm{H}}_{\varphi}\underline{\rm{mdim}}_M(f,K_{\alpha},d,\psi)\\
&=\liminf\limits_{\epsilon\to 0}\frac{1}{\log\frac{1}{\epsilon}}\sup_{\mu \in \mathcal{M}_f(X,\varphi,\alpha)}\left(\inf_{diam\xi<\epsilon}h_{\mu}(f,\xi)+ \Big(\log\frac{1}{\epsilon}\Big)\int\psi\mathrm{d}\mu\right),
\end{align*}
where  the infimum is taken over all finite measurable partition of $X$ that $diam \ \xi<\epsilon$.
When $(X,d,f)$ is a $\rm{TDS}$ ,$\psi=0$, ${\rm{H}}_{\varphi}\overline{\rm{midm}}_M(f,K_{\alpha},d,\psi)$ is equal to ${\rm{H}}_{\varphi}\overline{\rm{midm}}_M(f,\alpha,d)$  introduced by Backes and Rodrigues \cite{br23}.

\section{Proof of Main Result}
According to \cite[Proposition 3.4]{ycz22}, for $(X,d,f)$ be a $\rm{TDS}$, $\psi\in C(X,\mathbb R)$, and any non-empty subset $Z\subset X$, we have $\overline{\rm{mdim}}_M^B(f,K_{\alpha},d,\psi)\\ 
\leq\overline{\rm{midm}}_M^P(f,Z,d,\psi).$
In this section, we prove the rest part of our main result by the following  three propositions.
 Besides, we assume that $\overline{\rm{mdim}}_M^B(f,K_{\alpha},d,\psi)$, $\overline{\rm{midm}}_M^P(f,Z,d,\psi)$, $\Lambda_{\varphi}\overline{\rm{mdim}}_M(f,K_{\alpha},d,\psi)$ and \\
 ${\rm{H}}_{\varphi}\overline{\rm{mdim}}_M(f,K_{\alpha},d,\psi)$ are all finite.

 \begin{prop} \label{prop 3.1}
Under the assumptions of Theorem \ref{thm 2.3}, we have  
$$\overline{\rm{mdim}}_M^B(f,K_{\alpha},d,\psi)\leq\Lambda_{\varphi}\overline{\rm{mdim}}_M(f,K_{\alpha},d,\psi).$$
\end{prop}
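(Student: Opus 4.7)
The plan is to bound the Carath\'eodory-type sum $m_{N,\epsilon}(f,K_\alpha,s,d,\psi)$ directly, by assembling explicit Bowen-ball covers of $K_\alpha$ from near-optimal $(n,\epsilon)$-spanning sets of the approximate level sets $P(\alpha,\delta,n)$. The guiding observation is that every $x\in K_\alpha$ eventually lies in $P(\alpha,\delta,n)$ for all large $n$, so $K_\alpha\subset\bigcup_{n\ge M}P(\alpha,\delta,n)$ for every $M$; this lets me glue spanning sets chosen along a sparse but unbounded sequence of scales $n_k$ into one countable cover of $K_\alpha$ by Bowen balls of arbitrarily large order.

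Fix $s>\Lambda_\varphi^\psi(\alpha,\epsilon)$ and pick an intermediate $s'\in(\Lambda_\varphi^\psi(\alpha,\epsilon),s)$. Since $P(\alpha,\delta,n)$ shrinks as $\delta$ decreases, $N(\alpha,\delta,n,\epsilon,\psi)$ is monotone in $\delta$ and the limit defining $\Lambda_\varphi^\psi$ is attained as an infimum; hence I may fix $\delta>0$ small enough that $\liminf_{n\to\infty}\tfrac{1}{n}\log N(\alpha,\delta,n,\epsilon,\psi)<s'$. I then extract a strictly increasing sequence $n_1<n_2<\cdots\to\infty$ with $N(\alpha,\delta,n_k,\epsilon,\psi)<e^{s'n_k}$, and for each $k$ pick an $(n_k,\epsilon)$-spanning set $F_k$ of $P(\alpha,\delta,n_k)$ whose weighted sum $\sum_{x\in F_k}\exp(S_{n_k}\psi(x)\log(1/\epsilon))$ is also strictly less than $e^{s'n_k}$, which is possible by the definition of $N$ as an infimum.

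Given $N_0\in\mathbb{N}$, choose $K_0$ with $n_{K_0}\ge N_0$. The family $\{B_{n_k}(x,\epsilon):k\ge K_0,\,x\in F_k\}$ is then an admissible countable cover of $K_\alpha$ for $m_{N_0,\epsilon}$, and
\begin{align*}
m_{N_0,\epsilon}(f,K_\alpha,s,d,\psi)
&\le\sum_{k\ge K_0}e^{-sn_k}\sum_{x\in F_k}\exp\bigl(S_{n_k}\psi(x)\log(1/\epsilon)\bigr)\\
&\le\sum_{k\ge K_0}e^{-(s-s')n_k}\le\sum_{k\ge K_0}e^{-(s-s')k}<\infty,
\end{align*}
using $n_k\ge k$ for any strictly increasing sequence of positive integers. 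Thus $m_\epsilon(f,K_\alpha,s,d,\psi)<\infty$, and the elementary bound $m_{N,\epsilon}(f,K_\alpha,t,d,\psi)\le e^{-(t-s)N}m_{N,\epsilon}(f,K_\alpha,s,d,\psi)$ forces $m_\epsilon(f,K_\alpha,t,d,\psi)=0$ for every $t>s$, so $M_\epsilon(f,K_\alpha,d,\psi)\le s$.

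Sending $s\downarrow\Lambda_\varphi^\psi(\alpha,\epsilon)$, dividing by $\log(1/\epsilon)$, and taking $\limsup_{\epsilon\to0}$ will give the desired inequality. The main obstacle is that the hypothesis only controls $N(\alpha,\delta,n,\epsilon,\psi)$ along a $\liminf$-subsequence rather than for all $n$, which forces the cover to be built from just those scales $\{n_k\}$; convergence of the Carath\'eodory sum is then rescued by the strict gap $s-s'>0$ together with the automatic estimate $n_k\ge k$, yielding at least geometric decay.
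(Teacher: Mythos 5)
Your proof is correct, but it is structured differently from the paper's. The paper decomposes $K_\alpha$ as the increasing union of the sets $G(\alpha,\delta,k)=\bigcap_{n\ge k}P(\alpha,\delta,n)$, covers each $G(\alpha,\delta,k)$ by Bowen balls of a \emph{single} order $n_l$ (since $G(\alpha,\delta,k)\subset P(\alpha,\delta,n_l)$), sends $n_l\to\infty$ to force $m_{\epsilon}(f,G(\alpha,\delta,k),s,d,\psi)=0$, and then invokes the countable stability of $M_\epsilon$ (Proposition \ref{lem:2.1}) to pass from the $G(\alpha,\delta,k)$'s back to $K_\alpha$. You instead build one countable multi-order cover of $K_\alpha$ directly, gluing near-optimal $(n_k,\epsilon)$-spanning sets of $P(\alpha,\delta,n_k)$ along the good $\liminf$-subsequence, and use the fact that every $x\in K_\alpha$ eventually lies in $P(\alpha,\delta,n_k)$ so that the tail of the family already covers $K_\alpha$; convergence of the Carath\'eodory sum comes from the strict gap $s-s'>0$ together with $n_k\ge k$. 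Your route is more self-contained (it bypasses the countable-stability lemma and the auxiliary sets $G(\alpha,\delta,k)$, and handles all $\epsilon$ uniformly rather than along a selected sequence $\epsilon_j$), while the paper's route is more modular in that it re-uses a general-purpose lemma. A small remark: your estimate $m_{N_0,\epsilon}(f,K_\alpha,s,d,\psi)\le\sum_{k\ge K_0}e^{-(s-s')k}$ actually tends to $0$ as $N_0\to\infty$ (since $K_0\to\infty$), so you obtain $m_\epsilon(f,K_\alpha,s,d,\psi)=0$ directly and the final appeal to the elementary bound relating $s$ and $t>s$ is unnecessary, though harmless.
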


\begin{proof}
Let $\{\epsilon_j\}_{j\in \mathbb{N}}$ be a sequence of positive numbers converging to zero such that
\begin{align*}
	\overline{\rm{mdim}}_M^B(f,K_{\alpha},d,\psi)=\lim \limits_{j\to\infty}\frac{P_{\epsilon_j}(f,K_{\alpha},d,\psi)}{\log\frac{1}{\epsilon_j}}.
\end{align*}
Then
$$\limsup_{j\to\infty}\frac{\Lambda_{\varphi}^{\psi}(\alpha,\epsilon_j)}{\log\frac{1}{\epsilon_j}}\leq\limsup_{\epsilon\to\infty}\frac{\Lambda_{\varphi}^{\psi}(\alpha,\epsilon)}{\log\frac{1}{\epsilon}}=\Lambda_{\varphi}\overline{\rm{mdim}}_M(f,K_{\alpha},d,\psi).$$

Given $\delta>0$ and $k\in\N$. Set
\begin{align*}
	G(\alpha,\delta,k)&:=\bigcap_{n=k}^{\infty}P(\alpha,\delta,n)\\
	&:=\bigcap_{n=k}^{\infty}\left\{x\in X: \left| \frac{1}{n}\sum_{j=0}^{n-1}\varphi(f^j(x))-\alpha \right| <\delta\right\}.
\end{align*}
Obviously,  $K_{\alpha}\subset\bigcup_{k\in\mathbb{N}}G(\alpha,\delta,k).$
For given $k\in\mathbb{N}$ and  $n\ge k,$   $G(\alpha,\delta,k)\subset P(\alpha,\delta,n)$. There exists an   $(n,\epsilon_j)$ spanning set $E$  of $P(\alpha,\delta,n)$ satisfying
\begin{align*}
	m_{\epsilon_j}(f,G(\alpha,\delta,k),s,d,\psi)\leq\sum_{x\in E}^{}&\exp\left(-sn+S_n\psi(x)\Big(\log{\frac{1}{\epsilon_j}}\Big)\right).
\end{align*}
Set $s=s(\epsilon_j)>\Lambda_{\varphi}^{\psi}(\alpha,\epsilon_j)$ and $\gamma(\epsilon_j)=(s-\Lambda_{\varphi}^{\psi}(\alpha,\epsilon_j))/2>0$. Let $\delta_j>0$ be sufficiently small. There exists an increasing sequence $\{n_l\}_{l\in\mathbb{N}}\subset\mathbb{N}$ and spanning sets $E_l$ of $P(\alpha,\delta_j,n_l)$ such that
$$\sum_{x\in E_l}^{}\exp\left(\Big(\log{\frac{1}{\epsilon_j}}\Big)S_{n_l}\psi(x)\right)\leq\exp(n_l(\Lambda_{\varphi}^{\psi}(\alpha,\epsilon_j)+\gamma(\epsilon_j))).$$
Without loss of generality we assume that $n_1>k$ and 
\begin{align*}
	m_{\epsilon_j}(f,G(\alpha,\delta_j,k),s(\epsilon_j),d,\psi)&\leq\sum_{x\in E_l}^{}\exp\left(-sn_l+\Big(\log{\frac{1}{\epsilon_j}}\Big)S_{n_l}\psi(x)\right)\\
	&=\exp(-sn_l)\exp(n_l(\Lambda_{\varphi}^{\psi}(\alpha,\epsilon_j)+\gamma(\epsilon_j)))\\
	&=\exp(-n_l\gamma(\epsilon_j)).
\end{align*}
Since $\gamma(\epsilon_j)>0$, taking $n_l \to \infty$, we have $m_{\epsilon_j}(f,G(\alpha,\delta_j,k),s(\epsilon_j),d,\psi)=0$. As a consequence,
$$P_{\epsilon_j}(f,G(\alpha,\delta_j,k),d,\psi)\leq s(\epsilon_j).$$
According to the Lemma \ref{lem:2.1}, we have that
$$P_{\epsilon_j}(f,K_{\alpha},d,\psi)\leq\sup_{k}P_{\epsilon_j}(f,G(\alpha,\delta_j,k),d,\psi)\leq s(\epsilon_j).$$
Thus,
\begin{align*}
	\overline{\rm{mdim}}_M^B(f,K_{\alpha},d,\psi)&=\limsup_{j\to\infty}\frac{P_{\epsilon_j}(f,K_{\alpha},d,\psi)}{\log\frac{1}{\epsilon_j}}\\
	&\leq\limsup_{j\to\infty}\frac{s(\epsilon_j)}{\log\frac{1}{\epsilon_j}}\\
	&\leq\limsup_{j\to\infty}\frac{2\gamma(\epsilon_j)}{\log\frac{1}{\epsilon_j}}+\limsup_{j\to\infty}\frac{\Lambda_{\varphi}^{\psi}(\alpha,\epsilon_j)}{\log{\frac{1}{\epsilon_j}}}\\
	&\leq\limsup_{j\to\infty}\frac{2\gamma(\epsilon_j)}{\log\frac{1}{\epsilon_j}}+\Lambda_{\varphi}\overline{\rm{mdim}}_M(f,K_{\alpha},d,\psi).
\end{align*}
Therefore, we can choose $s(\epsilon_j)$ sufficiently close to $\Lambda_{\varphi}^{\psi}(\alpha,\epsilon_j)$, the $\limsup$ in the last inequality  is  zero.
Hence, \begin{align*}
\overline{\rm{mdim}}_M^B(f,K_{\alpha},d,\psi)\leq\Lambda_{\varphi}\overline{\rm{mdim}}_M(f,K_{\alpha},d,\psi).
\end{align*}

\end{proof}

\begin{prop}
	Under the assumptions of Theorem  \ref{thm 2.3}, we have 
	$$\overline{\rm{mdim}}_M^P(f,K_{\alpha},d,\psi)\leq \Lambda_{\varphi}\overline{\rm{mdim}}_M(f,K_{\alpha},d,\psi).$$
\end{prop}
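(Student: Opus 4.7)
The plan is to mirror the strategy of Proposition~\ref{prop 3.1}, adapted to the packing setting. Let $\{\epsilon_j\}_{j\in\mathbb{N}}$ be a sequence of positive numbers converging to zero such that
$$\overline{\rm{mdim}}_M^P(f,K_\alpha,d,\psi)=\lim_{j\to\infty}\frac{\mathcal P_{\epsilon_j}(f,K_\alpha,d,\psi)}{\log(1/\epsilon_j)}.$$
For each $j$ I would choose $\delta_j>0$ small enough so that $\liminf_{n\to\infty}\frac{1}{n}\log M(\alpha,\delta_j,n,\epsilon_j,\psi)$ lies within a pre-assigned $\gamma(\epsilon_j)>0$ of $\Gamma_\varphi^\psi(\alpha,\epsilon_j)$, and invoke the decomposition $K_\alpha\subset\bigcup_{k\in\mathbb{N}}G(\alpha,\delta_j,k)$ from the proof of Proposition~\ref{prop 3.1}. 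Proposition~\ref{prop A} then reduces the problem to controlling $\mathcal P_{\epsilon_j}(f,G(\alpha,\delta_j,k),d,\psi)$ uniformly in $k$, and the trivial one-element cover $\{G(\alpha,\delta_j,k)\}$ inserted in the infimum defining $\mathcal P_{\epsilon_j}$ gives $\mathcal P_{\epsilon_j}(f,G(\alpha,\delta_j,k),s,d,\psi)\le P_{\epsilon_j}(f,G(\alpha,\delta_j,k),s,d,\psi)$ for every $s$.

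The core step is to show that $P_{\epsilon_j}(f,G(\alpha,\delta_j,k),s(\epsilon_j),d,\psi)=0$ for an $s(\epsilon_j)$ slightly above $\Gamma_\varphi^\psi(\alpha,\epsilon_j)$. Given any pairwise disjoint closed Bowen-ball family $\{\overline{B}_{n_i}(x_i,\epsilon_j)\}_{i\in I}$ with $n_i\ge N\ge k$ and $x_i\in G(\alpha,\delta_j,k)$, I would group it by its scale parameter: for each fixed $n\ge N$, the centers $\{x_i:n_i=n\}$ form an $(n,\epsilon_j)$-separated subset of $P(\alpha,\delta_j,n)$, because disjointness of closed Bowen balls of a common scale forces $(n,\epsilon_j)$-separation and $G(\alpha,\delta_j,k)\subset P(\alpha,\delta_j,n)$ for $n\ge k$. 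Hence the scale-$n$ block of the packing sum is at most $\exp(-sn)\,M(\alpha,\delta_j,n,\epsilon_j,\psi)$, and summation gives
$$P_{N,\epsilon_j}(f,G(\alpha,\delta_j,k),s,d,\psi)\le\sum_{n\ge N}\exp(-sn)\,M(\alpha,\delta_j,n,\epsilon_j,\psi).$$
Taking $s=s(\epsilon_j)>\Gamma_\varphi^\psi(\alpha,\epsilon_j)+\gamma(\epsilon_j)$ and forcing this tail to vanish as $N\to\infty$ will yield $\mathcal P_{\epsilon_j}(f,K_\alpha,d,\psi)\le s(\epsilon_j)$. Dividing by $\log(1/\epsilon_j)$, letting $j\to\infty$, and using equation~(\ref{eq.(2.2)}) to pass from $\Gamma_\varphi^\psi$ to $\Lambda_\varphi\overline{\rm{mdim}}_M$ delivers the claimed inequality.

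\textbf{Main obstacle.} The delicate point is making the tail $\sum_{n\ge N}\exp(-sn)M(\alpha,\delta_j,n,\epsilon_j,\psi)$ vanish as $N\to\infty$, since $\Gamma_\varphi^\psi$ is a $\liminf_n$ and so $M(\alpha,\delta_j,n,\epsilon_j,\psi)$ is only known to be close to $\exp(n\Gamma_\varphi^\psi(\alpha,\epsilon_j))$ along a subsequence of $n$, not uniformly for all large $n$. I expect this is resolved by the specification hypothesis: a standard concatenation argument produces an almost-supermultiplicative relation of the form $M(\alpha,\delta',n+m+m(\epsilon_j),\epsilon_j,\psi)\gtrsim M(\alpha,\delta_j,n,\epsilon_j,\psi)\,M(\alpha,\delta_j,m,\epsilon_j,\psi)$ for $\delta'$ slightly larger than $\delta_j$, and a Fekete-type lemma then upgrades the $\liminf$ to a genuine limit, providing the uniform bound needed to close the estimate. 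This is the only place where the specification hypothesis really enters, and the argument should parallel the treatment of the packing estimate by Backes and Rodrigues~\cite{br23} in the $\psi=0$ case; the relation between $M$ and $N$ in Proposition~\ref{prop:2.2} is available as backup if one prefers to route the estimate through $\Lambda_\varphi^\psi$ rather than $\Gamma_\varphi^\psi$.
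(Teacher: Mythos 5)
Your reduction mirrors the paper's exactly: choose $\{\epsilon_j\}$ realizing $\overline{\rm{mdim}}_M^P(f,K_\alpha,d,\psi)$, decompose $K_\alpha\subset\bigcup_{k}G(\alpha,\delta,k)$, invoke Proposition~\ref{prop A}, and pass from $\mathcal P_{\epsilon_j}$ to $P_{\epsilon_j}$ via the trivial one-element cover. Your scale-grouping bound
$$P_{N,\epsilon_j}\bigl(f,G(\alpha,\delta_j,k),s,d,\psi\bigr)\le\sum_{n\ge N}\exp(-sn)\,M(\alpha,\delta_j,n,\epsilon_j,\psi)$$
is the correct estimate, and the obstacle you flag is a genuine one: $\Gamma_\varphi^\psi(\alpha,\epsilon_j)$ is a $\liminf_n$, so the bound $M(\alpha,\delta_j,n,\epsilon_j,\psi)\le\exp\bigl(n(\Gamma_\varphi^\psi(\alpha,\epsilon_j)+\gamma)\bigr)$ is available only along a subsequence $n_l$, which does not force the tail over all $n\ge N$ to vanish. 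The paper's own written proof has precisely this defect: it asserts $P_{\epsilon_j}(f,G(\alpha,\delta_j,k),s(\epsilon_j),d,\psi)\le\sum_{x\in F_l}\exp(-sn_l+\cdots)$ and lets $n_l\to\infty$, but $P_{\epsilon_j}$ is a decreasing limit over $N$ of suprema over mixed-scale packings, so a bound holding only along a subsequence of scales cannot dominate it. This is in marked contrast to the Bowen case of Proposition~\ref{prop 3.1}, where $m_{\epsilon_j}$ is an infimum over covers and hence bounded above by the cost of any single cheap cover $E_l$ from the $\liminf$ subsequence. Your observation is thus a correct diagnosis of a gap in the paper's argument, not a shortcoming peculiar to your write-up.

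Your proposed remedy --- using specification to glue orbits, obtaining approximate supermultiplicativity of $n\mapsto M(\alpha,\delta,n,\epsilon,\psi)$ at the price of slightly relaxing $\delta$ and absorbing a $Var(\psi,\cdot)$ error, and then invoking a Fekete-type argument to promote the $\liminf_n$ in $\Gamma_\varphi^\psi$ to a genuine limit --- is the standard fix in the Takens--Verbitskiy/Thompson line of work and should give the uniform-in-$n$ control needed to make the geometric tail vanish. As submitted your proposal does not carry this out, so it is incomplete, but it is incomplete in exactly the same place where the paper's argument is.
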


\begin{proof}
	Let $\{\epsilon_j\}_{j\in \mathbb{N}}$ be a sequence of positive numbers converging to zero such that
	\begin{align*}
		\overline{\rm{mdim}}_M^P(f,K_{\alpha},d,\psi)=\lim \limits_{j\to\infty}\frac{\mathcal P_{\epsilon_j}(f,K_{\alpha},d,\psi)}{\log\frac{1}{\epsilon_j}}.
	\end{align*}
	Then
	$$\limsup_{j\to\infty}\frac{\Gamma_{\varphi}^{\psi}(\alpha,\epsilon_j)}{\log\frac{1}{\epsilon_j}}\leq\limsup_{\epsilon\to\infty}\frac{\Gamma_{\varphi}^{\psi}(\alpha,\epsilon)}{\log\frac{1}{\epsilon}}=\Lambda_{\varphi}\overline{\rm{mdim}}_M(f,K_{\alpha},d,\psi).$$
	
	Given $\delta>0$ and $k\in\N$. Set
	\begin{align*}
		G(\alpha,\delta,k)&:=\bigcap_{n=k}^{\infty}P(\alpha,\delta,n)\\
		&:=\bigcap_{n=k}^{\infty}\left\{x\in X: \left| \frac{1}{n}\sum_{j=0}^{n-1}\varphi(f^j(x))-\alpha \right| <\delta\right\}.
	\end{align*}
	Obviously,  $K_{\alpha}\subset\bigcup_{k\in\mathbb{N}}G(\alpha,\delta,k).$
	For given $k\in\mathbb{N}$ and  $n\ge k,$   $G(\alpha,\delta,k)\subset P(\alpha,\delta,n)$. Then $P_{\epsilon_j}(f,G(\alpha,\delta,k),s,d,\psi)\leq P_{\epsilon_j}(f,P(\alpha,\delta,n),s,d,\psi)$ and there exists an   $(n,\epsilon_j)$-separated set $F$  of $P(\alpha,\delta,n)$ satisfying
	\begin{align*}
		P_{\epsilon_j}(f,G(\alpha,\delta,k),s,d,\psi)\leq\sum_{x\in F}^{}\exp\left(-sn+S_n\psi(x)\Big(\log{\frac{1}{\epsilon_j}}\Big)+\delta\right).
	\end{align*}
	Set $s=s(\epsilon_j)>\Gamma_{\varphi}^{\psi}(\alpha,\epsilon_j)$ and $\gamma(\epsilon_j)=(s-\Gamma_{\varphi}^{\psi}(\alpha,\epsilon_j))/2>0$. Let $\delta_j>0$ be sufficiently small. There exists an increasing sequence $\{n_l\}_{l\in\mathbb{N}}\subset\mathbb{N}$ and separated sets $F_l$ of $P(\alpha,\delta_j,n_l)$ such that
	$$\sum_{x\in F_l}^{}\exp\left(\Big(\log\frac{1}{\epsilon_j}\Big)S_{n_l}\psi(x)\right)\leq\exp(n_l(\Gamma_{\varphi}^{\psi}(\alpha,\epsilon_j)+\gamma(\epsilon_j))).$$
	Without loss of generality we assume that $n_1>k$ and 
	\begin{align*}
		P_{\epsilon_j}(f,G(\alpha,\delta_j,k),s(\epsilon_j),d,\psi)&\leq\sum_{x\in F_l}^{}\exp\left(-sn_l+\Big(\log\frac{1}{\epsilon_j}\Big)S_{n_l}\psi(x)+\delta_j\right)\\
		&=\exp(-sn_l+\delta_j)\exp(n_l(\Lambda_{\varphi}^{\psi}(\alpha,\epsilon_j)+\gamma(\epsilon_j)))\\
		&=\exp(-n_l\gamma(\epsilon_j)+\delta_j).
	\end{align*}
	Taking $n_l \to \infty$, since $\gamma(\epsilon_j)>0$, we have
	$$\mathcal P_{\epsilon_j}(f,G(\alpha,\delta_j,k),s(\epsilon_j),d,\psi)\leq P_{\epsilon_j}(f,G(\alpha,\delta_j,k),s(\epsilon_j),d,\psi)=0.$$
	As a consequence,
	$$\mathcal P_{\epsilon_j}(f,G(\alpha,\delta_j,k),d,\psi)\leq s(\epsilon_j).$$
	By Proposition \ref{prop A}, we have that
	$$\mathcal P_{\epsilon_j}(f,K_{\alpha},d,\psi)\leq \mathcal P_{\epsilon_j}\Big(f,\bigcup_{k\in\mathbb N}G(\alpha,\delta,k),d,\psi\Big)=\sup_{k}\mathcal P_{\epsilon_j}(f,G(\alpha,\delta_j,k),d,\psi).$$
	Thus,
	\begin{align*}
		\overline{\rm{mdim}}_M^P(f,K_{\alpha},d,\psi)&=\limsup_{j\to\infty}\frac{\mathcal P_{\epsilon_j}(f,K_{\alpha},d,\psi)}{\log\frac{1}{\epsilon_j}}\\
		&\leq\limsup_{j\to\infty}\frac{s(\epsilon_j)}{\log\frac{1}{\epsilon_j}}\\
		&\leq\limsup_{j\to\infty}\frac{2\gamma(\epsilon_j)}{\log\frac{1}{\epsilon_j}}+\limsup_{j\to\infty}\frac{\Gamma_{\varphi}^{\psi}(\alpha,\epsilon_j)}{\log{\frac{1}{\epsilon_j}}}\\
		&\leq\limsup_{j\to\infty}\frac{2\gamma(\epsilon_j)}{\log\frac{1}{\epsilon_j}}+\Lambda_{\varphi}\overline{\rm{mdim}}_M(f,K_{\alpha},d,\psi).
	\end{align*}
	Therefore, we can choose $s(\epsilon_j)$ sufficiently close to $\Gamma_{\varphi}^{\psi}(\alpha,\epsilon_j)$, the $\limsup$ in the last inequality  is  zero.
	Hence, \begin{align*}
	\overline{\rm{mdim}}_M^P(f,K_{\alpha},d,\psi)\leq\Lambda_{\varphi}\overline{\rm{mdim}}_M(f,K_{\alpha},d,\psi).
	\end{align*}
	
	\end{proof}

\begin{prop}
	Under the assumptions of Theorem  \ref{thm 2.3}, we have 
	$$\Lambda_{\varphi}\overline{\rm{mdim}}_M(f,K_{\alpha},d,\psi)\leq {\rm{H}}_{\varphi}\overline{\rm{mdim}}_M(f,K_{\alpha},d,\psi).$$
\end{prop}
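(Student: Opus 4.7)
My plan is to adapt a weighted Misiurewicz-type construction to the level set $K_\alpha$. By the identity \eqref{eq.(2.2)} it suffices to bound $\Gamma_\varphi^\psi(\alpha,\epsilon)$ from above by the analogous supremum quantity, because $\Gamma_\varphi^\psi$ is built from separated sets and therefore is naturally related to invariant measures. Fix $\epsilon\in(0,1)$ and $\delta>0$; I will extract a subsequence $n_k=n_k(\delta)\to\infty$ along which $\tfrac{1}{n_k}\log M(\alpha,\delta,n_k,\epsilon,\psi)\to b(\epsilon,\delta):=\liminf_n\tfrac{1}{n}\log M(\alpha,\delta,n,\epsilon,\psi)$, and for each $n_k$ choose an $(n_k,\epsilon)$-separated set $E_{n_k}\subset P(\alpha,\delta,n_k)$ whose weighted sum $Z_{n_k}:=\sum_{x\in E_{n_k}}\exp\!\bigl(S_{n_k}\psi(x)\log\tfrac{1}{\epsilon}\bigr)$ is within a factor $e^{o(n_k)}$ of $M(\alpha,\delta,n_k,\epsilon,\psi)$.

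Next I form the weighted empirical measures $\sigma_{n_k}:=Z_{n_k}^{-1}\sum_{x\in E_{n_k}}\exp\!\bigl(S_{n_k}\psi(x)\log\tfrac{1}{\epsilon}\bigr)\delta_x$ and their orbit averages $\mu_{n_k}:=\tfrac{1}{n_k}\sum_{j=0}^{n_k-1}f_*^j\sigma_{n_k}$. For any finite Borel partition $\xi$ with $\mathrm{diam}(\xi)<\epsilon$, the $(n_k,\epsilon)$-separatedness of $E_{n_k}$ forces each atom of $\xi^{n_k}$ to contain at most one point of $E_{n_k}$, and a direct computation of $H_{\sigma_{n_k}}(\xi^{n_k})=-\sum_B\sigma_{n_k}(B)\log\sigma_{n_k}(B)$ gives the key identity
\begin{equation*}
\frac{1}{n_k}\log Z_{n_k}=\frac{1}{n_k}H_{\sigma_{n_k}}(\xi^{n_k})+\log\frac{1}{\epsilon}\int\psi\,d\mu_{n_k}.
\end{equation*}
Let $\mu_\delta$ be any weak-$*$ accumulation point of $\{\mu_{n_k}\}$; it is $f$-invariant. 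Since $E_{n_k}\subset P(\alpha,\delta,n_k)$ gives $|\int\varphi\,d\mu_{n_k}-\alpha|<\delta$, we deduce $|\int\varphi\,d\mu_\delta-\alpha|\leq\delta$. Choosing $\xi$ with $\mu_\delta(\partial\xi)=0$, the standard Misiurewicz blocking inequality $\tfrac{1}{n}H_{\sigma_n}(\xi^n)\leq\tfrac{1}{q}H_{\mu_n}(\xi^q)+\tfrac{2q\log\#\xi}{n}$, combined with $H_{\mu_{n_k}}(\xi^q)\to H_{\mu_\delta}(\xi^q)$ and then $q\to\infty$, yields $b(\epsilon,\delta)\leq h_{\mu_\delta}(f,\xi)+\log\tfrac{1}{\epsilon}\int\psi\,d\mu_\delta$.

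Finally I send $\delta=\delta_j\to 0$ and take a further weak-$*$ limit $\mu=\lim_j\mu_{\delta_j}$, yielding $\mu\in\mathcal{M}_f(X,\varphi,\alpha)$. Passing to the infimum over partitions $\xi$ of diameter $<\epsilon$ and to the supremum over $\mathcal{M}_f(X,\varphi,\alpha)$ gives
\begin{equation*}
\Gamma_\varphi^\psi(\alpha,\epsilon)\leq \sup_{\mu\in\mathcal{M}_f(X,\varphi,\alpha)}\Bigl(\inf_{\mathrm{diam}\,\xi<\epsilon}h_\mu(f,\xi)+\log\frac{1}{\epsilon}\int\psi\,d\mu\Bigr),
\end{equation*}
and dividing by $\log\tfrac{1}{\epsilon}$, taking $\limsup_{\epsilon\to 0}$, and invoking \eqref{eq.(2.2)} completes the proof. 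The main obstacle is the diagonal passage $\delta_j\to 0$ coupled with the inner partition infimum: the partition realising $\inf_{\mathrm{diam}\,\xi<\epsilon}h_\mu(f,\xi)$ depends on the measure, while the limiting measure $\mu$ only appears after the diagonal limit. This is resolved by the standard observation that for any $\mu$ the infimum over partitions of diameter $<\epsilon$ coincides with the infimum over such partitions with $\mu$-null boundary, together with a careful selection of partitions from a countable family compatible with each $\mu_{\delta_j}$, so that the bound transfers cleanly to $\mu$.
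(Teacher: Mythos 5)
Your proposal follows the paper's route in all essentials: weighted empirical measures built on $(n,\epsilon)$-separated subsets of $P(\alpha,\delta,n)$, the identity $\tfrac{1}{n}\log Z_n=\tfrac{1}{n}H_{\sigma_n}(\xi^n)+\log\tfrac{1}{\epsilon}\int\psi\,d\mu_n$, and Misiurewicz's blocking inequality, all read through \eqref{eq.(2.2)}. The one substantive divergence is the order of limits, and it is precisely where your sketch leaves a gap. You fix $\delta$, take a weak-$*$ limit over $n_k\to\infty$ to obtain $\mu_\delta$ with $|\int\varphi\,d\mu_\delta-\alpha|\le\delta$, and only then send $\delta\to 0$ and take a second weak-$*$ limit to land in $\mathcal M_f(X,\varphi,\alpha)$. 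This forces you to transport the entropy inequality $b(\epsilon,\delta_j)\le h_{\mu_{\delta_j}}(f,\xi)+\log\tfrac{1}{\epsilon}\int\psi\,d\mu_{\delta_j}$ — proved with a partition $\xi$ adapted to $\mu_{\delta_j}$ — across a further limit in $\delta$. The map $\nu\mapsto h_\nu(f,\xi)$ is only upper semi-continuous, and only along a fixed $\xi$ with $\nu$-null boundary, so this transfer requires a single partition that is null simultaneously for all $\mu_{\delta_j}$ and for $\mu$. Your closing paragraph gestures at this but does not execute it, and "the standard observation" you invoke is not a standard fact.

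The paper avoids the issue entirely by intertwining the two limits: it chooses $\delta_{j,k}\to 0$ and $n_{j,k}\to\infty$ simultaneously as $k\to\infty$, so that any weak-$*$ accumulation point $\mu^{(j)}$ of the empirical measures $\mu_k^{(j)}$ already satisfies $\int\varphi\,d\mu^{(j)}=\alpha$. The blocking argument is then run once, against the single measure $\mu^{(j)}$, with a partition $\xi(j)$ of diameter $<\epsilon_j$ chosen to have $\mu^{(j)}$-null boundary from the outset; there is no second diagonal and nothing to transfer. If you rewrite your construction with the parameters $(\delta_{j,k},n_{j,k})$ diagonalized in this way, the rest of your argument goes through verbatim; as written, the last paragraph is a genuine gap.
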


\begin{proof}
Fix $\gamma>0$. Let $\{\epsilon_j\}_{j\in \mathbb{N}}$ be a sequence of positive numbers that converges to zero and statisfies
$$\Lambda_{\varphi}\overline{\rm{mdim}}_M(f,K_{\alpha},d,\psi)=\lim\limits_{j\to\infty}\frac{\Gamma_{\varphi}^{\psi}(\alpha,\epsilon_j)}{\log\frac{1}{\epsilon_j}}.$$ 
There exists $\epsilon_0>0$ so that for all $\epsilon_j\in \left(0,{\epsilon_0}\right]$, we have 
$$\frac{\Gamma_{\varphi}^{\psi}(\alpha,\epsilon_j)}{\log\frac{1}{\epsilon_j}}>\Lambda_{\varphi}\overline{\rm{mdim}}_M(f,K_{\alpha},d,\psi)-\frac{1}{3}\gamma.$$
Fix $\epsilon_j\in\left(0,\epsilon_0\right].$ According to (\ref{eq.2.1}), there exists a sequence of positive numbers $(\delta_{j,k})_{k\in\mathbb{N}}$ converging to zero such that for every $k\in\mathbb{N}$,
\begin{align*}
	&\left\{\liminf_{n\to\infty}\frac{1}{n}\log M(\alpha,\delta_{j,k},n,\epsilon_j,\psi)\right\}\cdot\frac{1}{\log{\frac{1}{\epsilon_j}}}\\
	&>\Lambda_{\varphi}\overline{\rm{mdim}}_M(f,K_{\alpha},d,\psi)-\frac{2}{3}\gamma.
\end{align*}
Thus, there exists a sequence of positive integers $(n_{j,k})_{k\in\mathbb{N}}$ satisfying $\lim\limits_{k\to\infty}n_{j,k}=\infty$ and  
\begin{align*}
	&\left\{\frac{1}{n_{j,k}}\log M(\alpha,\delta_{j,k},n_{j,k},\epsilon_j,\psi)\right\}\cdot\frac{1}{\log{\frac{1}{\epsilon_j}}}\\
	&>\Lambda_{\varphi}\overline{\rm{mdim}}_M(f,K_{\alpha},d,\psi)-\gamma.
\end{align*}
Let  $C_{j,k}$  be an $(n_{j,k},\epsilon_j)$ separated set of $P(\alpha,\delta_{j,k},n_{j,k})$  satisfying\begin{equation}\label{3.1}
	\left\{\frac{1}{n_{j,k}}\log{P_{j,k}}\right\}\cdot\frac{1}{\log{\frac{1}{\epsilon_j}}}>\Lambda_{\varphi}\overline{\rm{mdim}}_M(f,K_{\alpha},d,\psi)-\gamma,
\end{equation}
where
$P_{j,k}=\sum_{x\in C_{j,k}}^{}\exp\left(S_{n_{j,k}}\psi(x)\log\frac{1}{\epsilon_j}\right).$
For each $j,k\in\N,$  we construct the following  measures:
$$\sigma_k^{(j)}=\frac{1}{P_{j,k}}\sum_{x\in C_{j,k}}^{}\delta_x\exp\left(S_{n_{j,k}}\psi(x)\Big(\log{\frac{1}{\epsilon_j}}\Big)\right)$$
and
$$\mu_k^{(j)}=\frac{1}{n_{j,k}}\sum_{i=0}^{n_{j,k}-1}\sigma_k^{(j)}\circ f^{-i}.$$
It is easy to see that any accumulation point of $\{\mu_k^{(j)}\}_{k\in\mathbb{N}}$, say $\mu^{(j)}$, is $f$- invariant (see \cite[Theorem 6.9]{wal00} ). Without loss of generality, we may assume that $\lim\limits_{k\to\infty}\mu_k^{(j)}=\mu^{(j)}$. Then for $ j,k\in\mathbb{N}$, we  get
\begin{align*}
	\int\varphi \mathrm{d}{\mu_k^{(j)}}&=\int\varphi \mathrm{d}{\frac{1}{n_{j,k}}\sum_{i=0}^{n_{j,k}-1}\sigma_k^{(j)}\circ f^{-i}}=\frac{1}{n_{j,k}}\sum_{i=0}^{n_{j,k}-1}\int_{}^{}{\varphi\circ f^i} \mathrm{d}{\sigma_k^{(j)}}\\
&=\frac{1}{n_{j,k}}\sum_{i=0}^{n_{j,k}-1}\int{\varphi\circ f^i} \mathrm{d}{\frac{1}{P_{j,k}}\sum_{x\in C_{j,k}}^{}\delta_x\exp\left(S_{n_{j,k}}\psi(x)\Big(\log{\frac{1}{\epsilon_j}}\Big)\right)}\\
&=\frac{1}{n_{j,k}}\sum_{i=0}^{n_{j,k}-1}\sum_{x\in C_{j,k}}^{}\frac{\exp\left(S_{n_{j,k}}\psi(x)\Big(\log{\frac{1}{\epsilon_j}}\Big)\right)}{P_{j,k}} \varphi\circ f^i(x)\\
&=\frac{1}{P_{j,k}}\frac{1}{n_{j,k}}\sum_{x\in C_{j,k}}^{}S_{n_{j,k}}\varphi(x)\exp\left(S_{n_{j,k}}\psi(x)\Big(\log{\frac{1}{\epsilon_j}}\Big)\right)\\
&\leq\frac{1}{P_{j,k}}\frac{1}{n_{j,k}}\sum_{x\in C_{j,k}}^{}n_{j,k}(\delta_{j,k}+\alpha)\exp\left(S_{n_{j,k}}\psi(x)\Big(\log{\frac{1}{\epsilon_j}}\Big)\right)\\
&=\delta_{j,k}+\alpha.
\end{align*}
The last inequality is due to $x\in P(\alpha,\delta_{j,k},n_{j,k}).$
Similarly, we can get that $\int\varphi \mathrm{d}{\mu_k^{(j)}}\ge\alpha-\delta_{j,k}$. Thus, 
$$\left|\int\varphi \mathrm{d}{\mu_k^{(j)}}-\alpha\right|\leq\delta_{j,k}.$$
Taking $k\to\infty$, we have $\int\varphi \mathrm{d}{\mu^{(j)}}=\alpha$ for every $j\in\mathbb{N}$.
For every $j\in\mathbb{N}$, one can choose a Borel partition $\xi(j)=\{A_1,\dots,A_l\}$ of $X$ such that $diam(\xi(j))<\epsilon_j$ and $\mu^{(j)}(\partial A_i)=0$ for $1\leq i\leq l$. Then, we claim that 
$$H_{\sigma_k^{(j)}}\left(\bigvee_{i=0}^{n_{j,k}-1}f^{-i}\xi(j)\right)+\Big(\log{\frac{1}{\epsilon_j}}\Big)\int S_{n_{j,k}}\psi \mathrm{d}{\sigma_k^{(j)}}=\log P_{j,k}.$$
Now we prove the above claim. For $x$ and $y$ in the same element of $\bigvee_{i=0}^{n_{j,k}-1}f^{-i}\xi(j)$, we have $d_{n_{j,k}}(x,y)<\epsilon_j$. That means every element of $\bigvee_{i=0}^{n_{j,k}-1}f^{-i}\xi(j)$ can contain at most one point of $C_{j,k}$. Thus, we have
\begin{align*}
&H_{\sigma_k^{(j)}}\left(\bigvee_{i=0}^{n_{j,k}-1}f^{-i}\xi(j)\right)\\
=&\sum_{x\in C_{j,k}}^{}-\frac{\exp\left(S_{n_{j,k}}\psi(x)\Big(\log{\frac{1}{\epsilon_j}}\Big)\right)}{P_{j,k}}\log\frac{\exp\left(S_{n_{j,k}}\psi(x)\Big(\log{\frac{1}{\epsilon_j}}\Big)\right)}{P_{j,k}}\\
=&\sum_{x\in C_{j,k}}^{}-\frac{\exp\left(S_{n_{j,k}}\psi(x)\Big(\log{\frac{1}{\epsilon_j}}\Big)\right)}{P_{j,k}}\left(S_{n_{j,k}}\psi(x)\Big(\log{\frac{1}{\epsilon_j}}\Big)-\log P_{j,k}\right)
\end{align*}
and
\begin{align*}
	&\Big(\log{\frac{1}{\epsilon_j}}\Big)\int S_{n_{j,k}}\psi \mathrm{d}{\sigma_k^{(j)}}\\
=&\Big(\log{\frac{1}{\epsilon_j}}\Big)\int S_{n_{j,k}}\psi \mathrm{d}{\frac{1}{P_{j,k}}\sum_{x\in C_{j,k}}^{}\delta_x\exp\left(S_{n_{j,k}}\psi(x)\Big(\log{\frac{1}{\epsilon_j}}\Big)\right)}\\
=&\frac{1}{P_{j,k}}\Big(\log{\frac{1}{\epsilon_j}}\Big)\sum_{x\in C_{j,k}}^{}\exp\left(S_{n_{j,k}}\psi(x)\Big(\log{\frac{1}{\epsilon_j}}\Big)\right) S_{n_{j,k}}\psi(x).
\end{align*}
Combining the above two equalities we can get that
\begin{align*}
	&H_{\sigma_k^{(j)}}\left(\bigvee_{i=0}^{n_{j,k}-1}f^{-i}\xi(j)\right)+\Big(\log{\frac{1}{\epsilon_j}}\Big)\int S_{n_{j,k}}\psi \mathrm{d}{\sigma_k^{(j)}}\\
	=&\sum_{x\in C_{j,k}}^{}\frac{\exp\left(S_{n_{j,k}}\psi(x)\log\frac{1}{\epsilon_j}\right)}{P_{j,k}}\log P_{j,k}\\
	=&\log P_{j,k}.
\end{align*}
Fix natural number $q$ and $n_{j,k}$ with $1<q<n_{j,k}$, and for $0\leq s \leq q-1$, define $a(s)=[(n_{j,k}-s)/q]$ where $[q]$ means the biggest integer that not larger than $q$. Fix $0\leq s \leq q-1$. Then by \cite[Remark 2(ii)]{wal00}, we have 
$$\bigvee_{i=0}^{n_{j,k}-1}f^{-i}\xi(j)=\bigvee_{r=0}^{a(s)-1}f^{-(rq+s)}\left(\bigvee_{i=0}^{q-1}f^{-i}\xi(j)\right)\lor \bigvee_{t\in L}f^{-t}\xi(j),$$
where $L$ is a set with cardinality not more than $2q$. Therefore, 
$$H_{\sigma_k^{(j)}}\left(\bigvee_{i=0}^{n_{j,k}-1}f^{-i}\xi(j)\right)\leq \sum_{r=0}^{a(s)-1}H_{\sigma_k^{(j)}\circ f^{-(rq+s)}}\left(\bigvee_{i=0}^{q-1}f^{-i}\xi(j)\right)+2q\log l.$$
Then,
\begin{align*}
	\log P_{j,k}\leq &\sum_{r=0}^{a(s)-1}H_{\sigma_k^{(j)}\circ f^{-(rq+s)}}\left(\bigvee_{i=0}^{q-1}f^{-i}\xi(j)\right)+2q\log l\\
	&+\Big(\log{\frac{1}{\epsilon_j}}\Big)\int S_{n_{j,k}}\psi \mathrm{d}{\sigma_k^{(j)}}.
\end{align*}
Summing this over $s$ from 0 to $q-1$ and we have
\begin{align*}
	q\log P_{j,k}\leq &\sum_{p=0}^{n_{j,k}-1}H_{\sigma_k^{(j)}\circ f^{-p}}\left(\bigvee_{i=0}^{q-1}f^{-i}\xi(j)\right)+2q^2\log l \\
	&+q\Big(\log{\frac{1}{\epsilon_j}}\Big)\int S_{n_{j,k}}\psi \mathrm{d}{\sigma_k^{(j)}}.
\end{align*}
Now dividing by $n_{j,k}$ and using the concavity of the map $\mu \to H_{\mu}(\xi)$, we obtain
$$\frac{q}{n_{j,k}}\log P_{j,k}\leq H_{\mu_k^{(j)}}\left(\bigvee_{i=0}^{q-1}f^{-i}\xi(j)\right)+2\frac{q^2}{n_{j,k}}\log l +q\Big(\log{\frac{1}{\epsilon_j}}\Big)\int\psi \mathrm{d}\mu_k^{(j)}.$$
Then
\begin{align*}
	&q\left(\left(\Lambda_{\varphi}\overline{\rm{mdim}}_M(f,K_{\alpha},d,\psi)-\gamma\right)\log\frac{1}{\epsilon_j}\right)\overset{(3.1)}{<}\frac{q}{n_{j,k}}\log P_{j,k}\\
\leq& H_{\mu_k^{(j)}}\left(\bigvee_{i=0}^{q-1}f^{-i}\xi(j)\right)+2\frac{q^2}{n_{j,k}}\log l +q\Big(\log{\frac{1}{\epsilon_j}}\Big)\int\psi \mathrm{d}\mu_k^{(j)}.
\end{align*}
For every $B\in\bigvee_{i=0}^{q-1}f^{-i}\xi(j)$, we have $\mu^{(j)}(\partial(B))=0$. Thus, it follows from the weak convergence of measures $\mu_k^{(j)}$ to $\mu^{(j)}$ that $\lim\limits_{k\to\infty}\mu_k^{(j)}(B)=\mu^{(j)}(B)$ and, therefore,
$$\lim\limits_{k\to\infty}H_{\mu_k^{(j)}}\left(\bigvee_{i=0}^{q-1}f^{-i}\xi(j)\right)=H_{\mu^{(j)}}\left(\bigvee_{i=0}^{q-1}f^{-i}\xi(j)\right).$$
Thus taking $k\to\infty$ we have that
\begin{align*}
&q\left(\left(\Lambda_{\varphi}\overline{\rm{mdim}}_M(f,K_{\alpha},d,\psi)-\gamma\right)\log\frac{1}{\epsilon_j}\right)\\
&\leq H_{\mu^{(j)}}\left(\bigvee_{i=0}^{q-1}f^{-i}\xi(j)\right)+q\Big(\log{\frac{1}{\epsilon_j}}\Big)\int\psi d\mu^{(j)}.
\end{align*}
Dividing both side of the above inequality by $q$ and letting $q\to\infty$, we have
$$\left(\Lambda_{\varphi}\overline{\rm{mdim}}_M(f,K_{\alpha},d,\psi)-\gamma\right)\log\frac{1}{\epsilon_j}\leq h_{\mu^{(j)}}(f,\xi(j))+\Big(\log{\frac{1}{\epsilon_j}}\Big)\int\psi \mathrm{d}\mu^{(j)}$$
for all $j\in\N$, which implies that
\begin{align*}
	\Lambda_{\varphi}\overline{\rm{mdim}}_M(f,K_{\alpha},d,\psi)-\gamma\leq\frac{h_{\mu^{(j)}}(f,\xi(j))+\Big(\log{\frac{1}{\epsilon_j}}\Big)\int\psi \mathrm{d}\mu^{(j)}}{\log\frac{1}{\epsilon_j}}.
\end{align*}
Then,
\begin{align*}
	\Lambda_{\varphi}\overline{\rm{mdim}}_M(f,K_{\alpha},d,\psi)-\gamma\leq\frac{\inf_{\text{ diam }\xi<\epsilon_j}h_{\mu^{(j)}}(f,\xi)+\Big(\log{\frac{1}{\epsilon_j}}\Big)\int\psi \mathrm{d}\mu^{(j)}}{\log\frac{1}{\epsilon_j}}.
\end{align*}
As a result,
\begin{align*}
	&\Lambda_{\varphi}\overline{\rm{mdim}}_M(f,K_{\alpha},d,\psi)-\gamma\\
	&\leq\limsup_{j\to\infty}\frac{\sup_{\nu\in \mathcal{M}_f(X,\varphi,\alpha)}\inf_{\text{diam }\xi<\epsilon_j}h_{\nu}(f,\xi)+\Big(\log{\frac{1}{\epsilon_j}}\Big)\int\psi \mathrm{d}\nu }{\log\frac{1}{\epsilon_j}}\\
	&\leq\limsup_{\epsilon\to 0}\frac{\sup_{\nu\in \mathcal{M}_f(X,\varphi,\alpha)}\inf_{\text{diam }\xi<\epsilon}h_{\nu}(f,\xi)+\Big(\log{\frac{1}{\epsilon}}\Big)\int\psi \mathrm{d}\nu}{\log\frac{1}{\epsilon}}\\
	&={\rm{H}}_{\varphi}\overline{\rm{mdim}}_M(f,K_{\alpha},d,\psi)
\end{align*}
\end{proof}
Now we turn to show ${\rm{H}}_{\varphi}\overline{\rm{mdim}}_M(f,K_{\alpha},d,\psi)\leq \overline{\rm{mdim}}_M^B(f,K_{\alpha},d,\psi).$
We will construct  a Moran-like fractal   $F$ and   a special measure $\mu$, which satisfies the generalized pressure distribution principle on $F$.
Firstly, we recall some auxiliary quantities and lemmas. For $\mu \in \mathcal{M}_f(X), \ \delta\in (0,1)$ and $n\in\mathbb{N}$, denote $N_{\mu}^{\delta}(n,\epsilon)$ to be the minimal number of   $(n,\epsilon)$-balls, whose union has $\mu$-measure larger than $1-\delta$. Let
$$\overline{h}_{\mu}(f,\epsilon,\delta)=\limsup_{n\to\infty}\frac{1}{n}\log N_{\mu}^{\delta}(n,\epsilon),\ \underline{h}_{\mu}(f,\epsilon,\delta)=\liminf_{n\to\infty}\frac{1}{n}\log N_{\mu}^{\delta}(n,\epsilon).$$
For a finite Borel measurable partition $\xi$ of $X$ and a finite open cover $\mathcal{U}$ of $X$, $\xi\succ\mathcal{U}$ means that each elements of $\xi$ is contained in an element of $\mathcal{U}.$
\begin{lem}\label{lem 3.3}\cite{shi22}
Let $\mu\in \mathcal{M}_f^e(X), 0<\epsilon_2<\epsilon_1$ and $\mathcal{U}$ be a finite open over of $X$ with $diam(\mathcal{U})\leq\epsilon_1$ and $Leb(\mathcal{U})\ge\epsilon_2$. Then for any $\delta\in(0,1)$, we have
$$\overline{h}_{\mu}(f,\epsilon_1,\delta)\leq \inf_{\xi\succ\mathcal{U}}h_{\mu}(f,\xi)\leq\overline{h}_{\mu}(f,\epsilon_2,\delta),$$
$$\underline{h}_{\mu}(f,\epsilon_1,\delta)\leq \inf_{\xi\succ\mathcal{U}}h_{\mu}(f,\xi)\leq\underline{h}_{\mu}(f,\epsilon_2,\delta).$$
\end{lem}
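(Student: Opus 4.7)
My plan is to establish the four inequalities pairwise, treating the $\limsup$ and $\liminf$ rows by the same structural argument since the only difference is which limit is extracted at the end. For the \emph{left inequality} $\overline{h}_\mu(f,\epsilon_1,\delta) \le \inf_{\xi \succ \mathcal{U}} h_\mu(f,\xi)$, I fix any $\xi \succ \mathcal{U}$. Every atom of $\xi$ sits inside some element of $\mathcal{U}$, so $\mathrm{diam}(\xi) \le \mathrm{diam}(\mathcal{U}) \le \epsilon_1$, whence each atom of $\xi^n = \bigvee_{i=0}^{n-1} f^{-i}\xi$ has $d_n$-diameter at most $\epsilon_1$ and fits inside a single $(n,\epsilon_1)$-Bowen ball. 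By Shannon--McMillan--Breiman for the ergodic measure $\mu$, for any $\eta>0$ and all large $n$ at most $e^{n(h_\mu(f,\xi)+\eta)}$ atoms of $\xi^n$ cover a set of $\mu$-measure exceeding $1-\delta$; enclosing each atom in a Bowen ball gives $N_\mu^\delta(n,\epsilon_1) \le e^{n(h_\mu(f,\xi)+\eta)}$, and taking $\limsup_n$, then $\eta \to 0$, then infimum over $\xi$ finishes the argument. The $\liminf$ version goes through verbatim.

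For the \emph{right inequality} $\inf_{\xi \succ \mathcal{U}} h_\mu(f,\xi) \le \overline{h}_\mu(f,\epsilon_2,\delta)$, the Lebesgue-number hypothesis says each ball $B(z,\epsilon_2)$ lies in some $U \in \mathcal{U}$; iterating along orbits, each $(n,\epsilon_2)$-Bowen ball is contained in an element of $\mathcal{U}^n := \bigvee_{i=0}^{n-1} f^{-i}\mathcal{U}$. Starting from a near-optimal $(n,\epsilon_2)$-cover of a set of $\mu$-measure $> 1-\delta$, I replace each ball by an enclosing element of $\mathcal{U}^n$, obtaining a subfamily of $\mathcal{U}^n$ of no larger cardinality with the same covering property. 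This yields the measure-theoretic covering bound $N_\mu^\delta(\mathcal{U}^n) \le N_\mu^\delta(n,\epsilon_2)$, where $N_\mu^\delta(\mathcal{U}^n)$ denotes the minimal number of members of $\mathcal{U}^n$ needed to cover a set of $\mu$-measure greater than $1-\delta$.

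The main obstacle is converting this covering bound into a bound on $\inf_{\xi \succ \mathcal{U}} h_\mu(f,\xi)$. I would invoke a Romagnoli--Huang--Ye-type identity, valid for ergodic $\mu$, asserting that
\[
\inf_{\xi \succ \mathcal{U}} h_\mu(f,\xi) \;\le\; \limsup_{n\to\infty} \frac{1}{n}\log N_\mu^\delta(\mathcal{U}^n),
\]
proved by building, for each $n$, a partition refining $\mathcal{U}$ whose atoms on the large-measure set track a minimal subfamily of $\mathcal{U}^n$ and lump the complement into a single exceptional atom; subadditivity $H_\mu(\xi^{n+m}) \le H_\mu(\xi^n) + H_\mu(\xi^m)$, concavity of $\mu \mapsto H_\mu(\xi)$, and ergodicity then force the exceptional contribution to vanish asymptotically and deliver the stated entropy bound. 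Composing with the covering bound above gives the right inequality in both the $\limsup$ and $\liminf$ rows, completing the proof.
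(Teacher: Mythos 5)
This lemma is cited to \cite{shi22}; the present paper supplies no proof, so there is nothing in-paper to compare your attempt against. On its merits, your treatment of the left inequality is correct: if $\xi\succ\mathcal{U}$ then $\operatorname{diam}\xi\le\epsilon_1$, so every atom of $\xi^n$ has $d_n$-diameter at most $\epsilon_1$ and lies in a Bowen $\epsilon_1$-ball, and Shannon--McMillan--Breiman (with Egorov) for the ergodic $\mu$ shows that, for any $\eta>0$, eventually at most $e^{n(h_\mu(f,\xi)+\eta)}$ atoms of $\xi^n$ carry measure $>1-\delta$. Taking $\limsup$ (or $\liminf$), then $\eta\to0$, then the infimum over $\xi\succ\mathcal{U}$ gives $\overline{h}_\mu(f,\epsilon_1,\delta)\le\inf_{\xi\succ\mathcal{U}}h_\mu(f,\xi)$ and the $\liminf$ version.

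Your sketch of the right inequality has a genuine gap. The Lebesgue-number reduction $B_n(z,\epsilon_2)\subset U$ for some $U\in\mathcal{U}^n$ and the resulting bound $N_\mu^\delta(\mathcal{U}^n)\le N_\mu^\delta(n,\epsilon_2)$ are fine, but the construction you then describe does not work: the complement of the $(1-\delta)$-measure set need not lie inside any single element of $\mathcal{U}$ (or of $\mathcal{U}^n$), so ``lump the complement into a single exceptional atom'' does not yield a partition that is admissible for the infimum $\inf_{\xi\succ\mathcal{U}}$ (nor a partition refining $\mathcal{U}^n$ if you intend to route through $\inf_{\eta\succ\mathcal{U}^n}H_\mu(\eta)$). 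The natural repair --- intersect the complement with a fixed finite $\beta\succ\mathcal{U}$ and invoke the Romagnoli/Huang--Maass--Romagnoli--Ye identity $\inf_{\xi\succ\mathcal{U}}h_\mu(f,\xi)=\lim_n\frac{1}{n}\inf_{\eta\succ\mathcal{U}^n}H_\mu(\eta)$ for ergodic $\mu$ --- gives only $\inf_{\xi\succ\mathcal{U}}h_\mu(f,\xi)\le\underline{h}_\mu(f,\epsilon_2,\delta)+\delta\log\#\beta$, with an error term that does not vanish for fixed $\delta$. Since the lemma asserts the clean inequality for every $\delta\in(0,1)$, you actually need the Shannon--McMillan--Breiman / Katok-type formula for open covers (the tool \cite{shi22} relies on), which is a sharper statement than what your partition-building plus ``subadditivity, concavity of $\mu\mapsto H_\mu(\xi)$, and ergodicity'' delivers; those ingredients alone do not eliminate the $\delta$-error or make the exceptional set admissible.
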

For any $Z\subset X,\ \delta\in(0,1),\ \epsilon>0$, set
\begin{align*}
	&Q_n(Z,\psi,\epsilon):=\inf\left\{\sum_{x\in E}^{}\exp\left\{\left(\log\dfrac{1}{\epsilon} \right) S_n\psi(x)\right\}:E\ \text{is an}\ (n,\epsilon)\ \text{spanning set for}\ Z\right\}\\
	&P_n(Z,\psi,\epsilon):=\sup\left\{\sum_{x\in E}^{}\exp\left\{\left(\log\dfrac{1}{\epsilon} \right)S_n\psi(x)\right\}:E \ \text{is an}\ (n,\epsilon)\ \text{separated set for}\ Z\right\}\\
	&N_n^{\mu}(\psi,\delta,\epsilon):=\\
	&\inf\left\{\sum_{x\in E}^{}\exp\left\{\left(\log\dfrac{1}{\epsilon} \right)S_n\psi(x)\right\}:E \ \text{is an}\ (n,\epsilon)\ \text{spanning set of}\ G\subset X\ \text{with}\ \mu(G)\ge{1-\delta}\right\}\\
\end{align*}
Obviously, we have that $N_n^{\mu}(\psi,\delta,\epsilon)\leq Q_n(Z,\psi,\epsilon)\leq P_n(Z,\psi,\epsilon).$

\begin{lem}\label{lem 3.4}\cite[Proposition 1.3]{cl23}
	Let $(X,f)$ be a $\rm{TDS}$ and $\mu\in\mathcal{M}_f^e(X)$. For $\epsilon>0,\ \delta\in(0,1)$ and $\psi \in C(X,\mathbb{R})$, we have
	$$\lim\limits_{\delta\to0}\liminf_{n\to\infty}\frac{1}{n}\log{N_n^{\mu}\left(\psi,\frac{\delta}{2},\epsilon\right)}\ge\lim\limits_{\delta \to0}\underline{h}_{\mu}(f,\delta,\epsilon)+\left(\log\dfrac{1}{4\epsilon} \right)\int\psi d{\mu},$$
	$$\lim\limits_{\delta\to0}\limsup_{n\to\infty}\frac{1}{n}\log{N_n^{\mu}\left(\psi,\frac{\delta}{2},\epsilon\right)}\ge\lim\limits_{\delta \to0}\overline{h}_{\mu}(f,\delta,\epsilon)+\left(\log\dfrac{1}{4\epsilon} \right)\int\psi d{\mu}.$$
\end{lem}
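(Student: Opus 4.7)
The plan is to lower-bound the weighted spanning quantity $N_n^\mu(\psi,\delta/2,\epsilon)$ by the unweighted count $N_\mu^{\delta}(n,\epsilon)$ underlying $\underline{h}_\mu$ and $\overline{h}_\mu$, multiplied by a factor of the form $\exp\!\bigl(n\log\tfrac{1}{4\epsilon}\cdot\int\psi\,d\mu\bigr)$. The driving idea is that the Birkhoff ergodic theorem (available since $\mu$ is ergodic) together with Egorov's theorem allows us to replace $S_n\psi(x)$ by $n\int\psi\,d\mu$ uniformly on a set of large $\mu$-measure past a deterministic time.

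Fix $\eta>0$ and use Egorov to produce a set $A_\eta\subset X$ with $\mu(A_\eta)\geq 1-\delta/4$ and an integer $N_0$ such that $|\tfrac1n S_n\psi(x)-\int\psi\,d\mu|<\eta$ for all $n\geq N_0$ and all $x\in A_\eta$. Any $G\subset X$ with $\mu(G)\geq 1-\delta/2$ then satisfies $\mu(G\cap A_\eta)\geq 1-3\delta/4$. Given an almost-optimal $(n,\epsilon)$-spanning set $E$ of such a $G$, restrict to $E_1=\{x\in E:B_n(x,\epsilon)\cap G\cap A_\eta\neq\emptyset\}$; the balls $\{B_n(x,\epsilon)\}_{x\in E_1}$ still cover $G\cap A_\eta$, so $|E_1|\geq N_\mu^{3\delta/4}(n,\epsilon)$. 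For each $x\in E_1$ pick a witness $y_x$ in the intersection. Since $d_n(x,y_x)<\epsilon$ and $y_x\in A_\eta$,
\[
S_n\psi(x)\geq S_n\psi(y_x)-n\,\mathrm{Var}(\psi,\epsilon)\geq n\Bigl(\textstyle\int\psi\,d\mu-\eta-\mathrm{Var}(\psi,\epsilon)\Bigr),
\]
and discarding contributions outside $E_1$ (using $\psi\geq 0$ WLOG, and $\log\tfrac{1}{\epsilon}>0$) gives
\[
\sum_{x\in E}\exp\!\bigl(S_n\psi(x)\log\tfrac{1}{\epsilon}\bigr)\;\geq\;N_\mu^{3\delta/4}(n,\epsilon)\cdot\exp\!\Bigl(n\bigl(\textstyle\int\psi\,d\mu-\eta-\mathrm{Var}(\psi,\epsilon)\bigr)\log\tfrac{1}{\epsilon}\Bigr).
\]

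Taking $\tfrac1n\log$, then $\liminf_n$ (respectively $\limsup_n$), and letting $\delta\to 0$ followed by $\eta\to 0$ recovers both inequalities, once one verifies that the unwanted $-(\log\tfrac{1}{\epsilon})\mathrm{Var}(\psi,\epsilon)$ term is absorbed by the slack $-(\log 4)\int\psi\,d\mu$ that comes from downgrading $\log\tfrac{1}{\epsilon}$ to $\log\tfrac{1}{4\epsilon}$ in the statement. This absorption is the main obstacle: for merely continuous $\psi$ and fixed $\epsilon$, $\mathrm{Var}(\psi,\epsilon)$ need not be small enough to directly dominate $(\log 4)\int\psi\,d\mu/\log\tfrac{1}{\epsilon}$, so the clean argument above produces a slightly different constant. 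I expect that tightening to the exact $\tfrac{1}{4\epsilon}$ requires a refined halving argument---for instance, replacing the $(n,\epsilon)$-witness $y_x$ by a point in a tighter $(n,\epsilon/2)$- or $(n,\epsilon/4)$-neighbourhood so that the oscillation of $\psi$ is measured at a sharper scale at the cost of only a controlled multiplicative factor---and this careful bookkeeping is where the technical effort concentrates.
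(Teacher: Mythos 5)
The paper does not supply a proof of this lemma; it cites it from \cite[Proposition~1.3]{cl23}, so there is no in-paper argument to match against. Your plan (Birkhoff's ergodic theorem plus Egorov to force $\frac{1}{n}S_n\psi\approx\int\psi\,d\mu$ uniformly on a set of large measure, then peel off that exponential factor from the weighted spanning count) is the natural one. But you have correctly identified a real gap, not a cosmetic one, and the sketch you offer for closing it does not work.

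What your argument actually yields is
$$\lim_{\delta\to 0}\liminf_{n\to\infty}\frac{1}{n}\log N_n^{\mu}\Bigl(\psi,\frac{\delta}{2},\epsilon\Bigr)\ \ge\ \lim_{\delta\to 0}\underline{h}_{\mu}(f,\epsilon,\delta)\ +\ \Bigl(\log\frac{1}{\epsilon}\Bigr)\Bigl(\int\psi\,d\mu-\mathrm{Var}(\psi,\epsilon)\Bigr),$$
whereas the lemma asserts the additive term $\bigl(\log\frac{1}{4\epsilon}\bigr)\int\psi\,d\mu=\bigl(\log\frac{1}{\epsilon}\bigr)\int\psi\,d\mu-(\log 4)\int\psi\,d\mu$. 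Your bound implies the stated one only when $(\log\frac{1}{\epsilon})\mathrm{Var}(\psi,\epsilon)\le(\log 4)\int\psi\,d\mu$, which fails at fixed $\epsilon$ whenever $\psi$ is continuous, nonnegative, has $\epsilon$-oscillation of order one, and has small integral against $\mu$; the two lower bounds are genuinely incomparable. The ``refined halving'' you propose does not repair this: the witness $y_x$ is only controlled at distance $\epsilon$ from $x$ because $\epsilon$ is the spanning radius built into the definition of $N_n^{\mu}(\psi,\cdot,\epsilon)$, so you cannot replace $\mathrm{Var}(\psi,\epsilon)$ by $\mathrm{Var}(\psi,\epsilon/4)$; and subdividing each $(n,\epsilon)$-ball into $(n,\epsilon/4)$-balls costs a multiplicative factor that can grow exponentially in $n$, not a ``controlled'' one. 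So the proposal proves a different, incomparable inequality, and the mechanism by which \cite{cl23} arrives at the specific constant $\log\frac{1}{4\epsilon}$ is still missing.
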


\begin{prop}\label{prop 3.5}
	Under the hypotheses of theorem \ref{thm 2.3} we have that 
	$${\rm{H}}_{\varphi}\overline{\rm{mdim}}_M(f,K_{\alpha},d,\psi)\leq \overline{\rm{mdim}}_M^B(f,K_{\alpha},d,\psi).$$
	\end{prop}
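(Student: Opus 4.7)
The plan is to fix a sequence $\epsilon_j\downarrow 0$ along which
$${\rm{H}}_{\varphi}\overline{\rm{mdim}}_M(f,K_\alpha,d,\psi) = \lim_{j\to\infty} \frac{1}{\log\frac{1}{\epsilon_j}} \sup_{\mu\in \mathcal M_f(X,\varphi,\alpha)} \left( \inf_{\mathrm{diam}\,\xi<\epsilon_j} h_\mu(f,\xi) + \Big(\log\frac{1}{\epsilon_j}\Big)\int\psi\,\mathrm{d}\mu \right),$$
and for each $j$ to pick a near-maximiser $\mu_j\in\mathcal M_f(X,\varphi,\alpha)$. By ergodic decomposition together with the concavity of metric entropy and the linearity of the maps $\mu\mapsto\int\psi\,\mathrm{d}\mu$ and $\mu\mapsto\int\varphi\,\mathrm{d}\mu$, I may (after losing at most a prescribed $\gamma>0$) replace $\mu_j$ by an ergodic component still satisfying $\int\varphi\,\mathrm{d}\mu_j=\alpha$. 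Lemma \ref{lem 3.3} converts the partition entropy into a covering quantity at a comparable scale, and Lemma \ref{lem 3.4} then produces
$$\lim_{\delta\to 0}\liminf_{n\to\infty}\frac{1}{n}\log N_n^{\mu_j}\!\left(\psi,\tfrac{\delta}{2},\epsilon\right) \;\ge\; \inf_{\mathrm{diam}\,\xi<\epsilon'} h_{\mu_j}(f,\xi) + \Big(\log\frac{1}{4\epsilon}\Big)\int\psi\,\mathrm{d}\mu_j,$$
which is the exact form in which I will use Birkhoff-weighted spanning counts.

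Next, for each $j$ I extract from a $(n,\epsilon_j)$-spanning set of a $\mu_j$-large set a weighted $(n,\epsilon_j)$-separated subset $E_j$ whose elements all lie in $P(\alpha,\delta_j,n)$ with $\delta_j\downarrow 0$; the ergodicity of $\mu_j$ ensures that most Bowen balls at large $n$ meet $P(\alpha,\delta_j,n)$, so such a selection is possible. Using the specification property with gap $m(\epsilon_j)$, I then concatenate long orbit segments drawn successively from sequences $E_{j_1},E_{j_2},\dots$ with block lengths $n_{j_l}$ and block multiplicities $N_l$ chosen so that the cumulative gap overhead is negligible; the $\epsilon_{j_l}$-shadowing points of these concatenations form a Moran-like Cantor set $F$. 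A standard Takens--Verbitskiy / Thompson argument, using $\delta_{j_l}\to 0$ and $n_{j_l}N_l$ dominating the cumulative gaps, shows $\tfrac{1}{n}S_n\varphi(x)\to\alpha$ for every $x\in F$, so that $F\subset K_\alpha$.

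On $F$ I would construct the natural Moran measure $\mu$, whose level-$l$ cylinder through anchor points $x_1,\dots,x_l$ receives mass proportional to $\prod_{i\le l}\exp\big((\log\tfrac{1}{\epsilon_{j_i}})S_{n_{j_i}}\psi(x_i)\big)/P_{j_i}$, and then verify a generalised pressure distribution principle: for every Bowen ball $B_n(y,\epsilon_j)$ meeting $F$,
$$\mu\big(B_n(y,\epsilon_j)\big) \;\le\; C\exp\!\left(-sn+\Big(\log\frac{1}{\epsilon_j}\Big)S_n\psi(y)\right),$$
whenever $s$ is strictly smaller than the bracketed quantity defining ${\rm H}_\varphi\overline{\rm mdim}_M$ at scale $\epsilon_j$ and $n$ is large enough. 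Summing this estimate over any countable Bowen cover of $F\subset K_\alpha$ forces $m_{\epsilon_j}(f,K_\alpha,s,d,\psi)\ge\mu(F)>0$, hence $M_{\epsilon_j}(f,K_\alpha,d,\psi)\ge s$; dividing by $\log\frac{1}{\epsilon_j}$, taking $j\to\infty$ and then $\gamma\to 0$ yields the claimed inequality.

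The hard part will be the last step. One must tune the multiplicities $N_l$ and the gap lengths $m(\epsilon_{j_l})$ so that $(n,\epsilon_{j_l})$-closeness of two points of $F$ forces them to share a long common coding in the Cantor construction (which is what feeds the distribution-principle bound on $\mu(B_n(y,\epsilon_j))$), while simultaneously keeping the Birkhoff averages of $\varphi$ along the whole concatenation inside $(\alpha-\delta,\alpha+\delta)$ and absorbing the $\mathrm{Var}(\psi,\epsilon_j)$ errors that arise when transferring $\psi$-weights from $y$ to the nearest anchor. Balancing these three competing requirements, together with the factor-$4$ loss coming from Lemma \ref{lem 3.4}, will be the technical heart of the argument.
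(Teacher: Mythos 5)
Your overall roadmap (pick near-maximisers, pass to ergodic data via Lemmas \ref{lem 3.3}--\ref{lem 3.4}, build a Moran fractal with specification, verify $F\subset K_\alpha$, and invoke a pressure distribution principle) matches the paper's. However, there is a genuine gap at the very first reduction step, and it is not a small one.

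You write that by ergodic decomposition, concavity of metric entropy, and linearity of $\mu\mapsto\int\varphi\,d\mu$, you may ``replace $\mu_j$ by an ergodic component still satisfying $\int\varphi\,d\mu_j=\alpha$'' after losing at most $\gamma$. This does not work. The ergodic components of a measure in $\mathcal M_f(X,\varphi,\alpha)$ need not individually satisfy $\int\varphi\,d\nu=\alpha$; only their average does. For a simple obstruction, take $\mu=\tfrac12(\nu_1+\nu_2)$ with $\nu_1,\nu_2$ ergodic, $\int\varphi\,d\nu_1=\alpha-1$, $\int\varphi\,d\nu_2=\alpha+1$, and both with entropy close to the sup: then no ergodic component is in $\mathcal M_f(X,\varphi,\alpha)$, and the ones that \emph{are} (elsewhere in $\mathcal M_f^e(X)$) may all have much smaller entropy. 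Your subsequent step explicitly needs $\mu_j$ ergodic \emph{with} $\int\varphi\,d\mu_j=\alpha$ so that ``most Bowen balls at large $n$ meet $P(\alpha,\delta_j,n)$,'' so this is load-bearing.

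The paper's Lemma \ref{lem 3.6} is designed precisely to sidestep this: it does not replace $\mu$ by a single ergodic component, but by a \emph{finite convex combination} $\nu_k=\sum_{i=1}^{j(k)}\lambda_i\nu_i^k$ of ergodic measures, retaining the entropy-plus-potential functional and keeping $\int\varphi\,d\nu_k$ close to $\alpha$. Consequently, the Moran construction (Lemmas \ref{lem 3.7}--\ref{lem 3.9}) must concatenate, within a \emph{single} level-$k$ block, orbit segments drawn from \emph{all} $j(k)$ ergodic components, with time-proportions $\approx\lambda_i$, so that the Birkhoff average of $\varphi$ over one block is close to $\sum\lambda_i\int\varphi\,d\nu_i^k\approx\alpha$ even though no single component has average $\alpha$. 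This multi-component interleaving is the step your proposal omits, and without it the ``$F\subset K_\alpha$'' verification breaks down. Once you incorporate it, the remaining machinery (separated sets at scale $\tfrac{5\epsilon_0}{4}$, the measure weighted by $\exp(S_{n_i}\psi\log\tfrac{1}{5\epsilon_0})$, and the generalised pressure distribution principle of Lemma \ref{lem 3.21}) is as you describe, with the bookkeeping losses you anticipate in the last paragraph.

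A smaller point: in the paper the Moran construction takes place at a single \emph{fixed} scale $\epsilon_0$ (tied to the chosen $\gamma$), with the approximating measures $\nu_k$ varying by level; one only sends $\epsilon_0\to 0$ after the distribution principle has been applied. Your phrasing of concatenating blocks at scales $\epsilon_{j_1},\epsilon_{j_2},\dots$ suggests a scale that changes along the construction, which would complicate the separation estimates unnecessarily and is not what the paper does.
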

\begin{proof}
Let $S:={\rm{H}}_{\varphi}\overline{\rm{mdim}}_M(f,K_{\alpha},d,\psi).$ We first consider the case  $0<S<\infty$. Given  $\gamma\in(0,\min\{S/7,1\})$ small enough and let $\{\delta_k\}_{k\in\mathbb{N}}$ be a decreasing sequence converging to $0$ with $\delta_1<\gamma/2$. There exists $\epsilon_0=\epsilon_0(\gamma)>0$ and $\mu\in\mathcal{M}_f(X,\varphi,\alpha)$ such that
\begin{gather}
	\log\frac{1}{5\epsilon_0}>1,\\\label{3.2}
	S-\gamma\leq\frac{1}{\log{\frac{1}{5\epsilon_0}}}\left(\inf_{diam\xi<5\epsilon_0}h_{\mu}(f,\xi)+\Big(\log{\frac{1}{5\epsilon_0}}\Big)\int\psi \rm{d}{\mu}\right),\\
	\sup_{\epsilon\in(0,5\epsilon_0)}\frac{M_{\epsilon}(f,K_{\alpha},d,\psi)}{\log{\frac{1}{\epsilon}}}\leq\overline{\rm{mdim}}_M^B(f,K_{\alpha},d,\psi)+\gamma.\label{3.4}
\end{gather}
Let $\mathcal{U}$ be a finite open cover of $X$ with $diam(\mathcal{U})\leq5\epsilon_0$ and $Leb(\mathcal{U})\ge\frac{5\epsilon_0}{4}$ \cite[Lemma 3.4]{shi22}. Since we can not ensure $\mu\in\mathcal{M}_f^e(X)$, we need to construct a measure which is a finite combination of some ergodic measures and is close to  $\mu$.
The following lemma is a generalized form of \cite[p. 535]{ys90}.
\begin{lem}\label{lem 3.6}
Let  $\delta_k$ and  $\epsilon>0$ be given as above.   There exists a measure $\nu_k\in\mathcal{M}_f(X)$ satisfying
	\begin{align*}
		&(1)\  \nu_k=\sum_{i=1}^{j(k)}\lambda_i\nu_i^k,\  where\ \lambda_i>0,\ \sum_{i=1}^{j(k)}\lambda_i=1\ and\ \nu_i^k\in\mathcal{M}_f^e(X);\\
		&(2)\  \inf_{\xi\succ\mathcal{U}}h_{\mu}(f,\xi)+\Big(\log{\frac{1}{\epsilon}}\Big)\int\psi \mathrm{d}{\mu}\leq\inf_{\xi\succ\mathcal{U}}h_{\nu_k}(f,\xi)+\Big(\log{\frac{1}{\epsilon}}\Big)\int\psi \mathrm{d}{\nu_k}+\delta_k;\\
		&(3)\  \left|\int\varphi \mathrm{d}{\nu_k}-\int\varphi \mathrm{d}{\mu}\right|<\delta_k.
	\end{align*}	
\end{lem}
\begin{proof}
	It is easy to see that the week*-topology on $\mathcal{M}(X)$ is metrizable, and let $d_*$ to be one of the compatible metrics. Let $\beta_k>0$ be sufficiently small such that for every $\tau_1,\tau_2\in\mathcal{M}_f(X)$, if $d_*(\tau_1,\tau_2)<\beta_k,$ then we have  
	$$\left|\int\varphi \mathrm{d}{\tau_1}-\int\varphi \mathrm{d}{\tau_2}\right|<\delta_k.$$ 
	Let $\mathcal{P}=\{P_1,\dots,P_{j(k)}\}$ be a partition of $\mathcal{M}_f(X)$ whose diameter with respect to $d_*$ is smaller than $\beta_k.$ According to the Ergodic Decomposition Theorem \cite[Remark (2)]{wal00} there exists a probability measure $\hat{\mu}$ on $\mathcal{M}_f(X)$ satisfying 
	\begin{align*}
	&	\hat{\mu}(\mathcal{M}_f^e(X))=1,\\
		&\mu=\int_{\mathcal{M}_f^e(X)}^{}\tau \mathrm{d}{\hat{\mu}(\tau)},\\
		&\int{\psi(x)\mathrm{d}{\mu(x)}}=\int_{\mathcal{M}_f^e(X)}^{}\left(\int\psi(x)\mathrm{d}{\tau(x)}\right)\mathrm{d}{\hat{\mu}(\tau)}.
	\end{align*}
	Since
	$$\sup_{\tau\in\mathcal{M}_f^e(X)}\left\{\inf_{\xi\succ\mathcal{U}}h_{\tau}(f,\xi)+\Big(\log{\frac{1}{\epsilon}}\Big)\int\psi \mathrm{d}{\tau}\right\}<\infty,$$
	there exists a $\nu_i^k\in P_i\cap{\mathcal{M}_f^e}(X)$ such that 
	$$\inf_{\xi\succ\mathcal{U}}h_{\nu_i^k}(f,\xi)+\Big(\log{\frac{1}{\epsilon_j}}\Big)\int\psi \mathrm{d}{\nu_i^k}\ge\inf_{\xi\succ\mathcal{U}}h_{\tau}(f,\xi)+\Big(\log{\frac{1}{\epsilon}}\Big)\int\psi \mathrm{d}{\tau}-\delta_k,$$ 
	for $\hat{\mu}-$almost every $\tau\in P_i\cap{\mathcal{M}_f^e}(X).$

	Let  $\lambda_i=\hat{\mu}(P_i)$ and  $\nu_k=\sum_{i=1}^{j(k)}\lambda_i\nu_i^k$. Clearly, $\nu_k$ satisfies (1) and (3). By \cite[Proposition 5]{hmry04}, we have
	$$\inf_{\xi\succ\mathcal{U}}h_{\mu}(f,\xi)=\int_{\mathcal{M}_f^e(X)}^{}\inf_{\xi\succ\mathcal{U}}h_{\tau}(f,\xi)\mathrm{d}{\hat{\mu}(\tau)}$$
	and
	$$\int\psi \mathrm{d}{\mu}=\int_{\mathcal{M}_f^e(X)}^{}\left(\int\psi(x)\mathrm{d}{\tau(x)}\right)\mathrm{d}{\hat{\mu}(\tau)}.$$
	Thus, by the choice of the measure $\nu_i^k$, we have 
	\begin{align*}
	&\inf_{\xi\succ\mathcal{U}}h_{\mu}(f,\xi)+\int\psi \mathrm{d}{\mu}\log{\frac{1}{\epsilon}}\\
	&=\sum_{i=1}^{j(k)}\int_{P_i\cap\mathcal{M}_f^e(X)}^{}\left\{\inf_{\xi\succ\mathcal{U}}h_{\tau}(f,\xi)+\Big(\log{\frac{1}{\epsilon}}\Big)\int\psi(x)\mathrm{d}{\tau(x)}\right\}d{\hat{\mu}(\tau)}\\
	&\leq\sum_{i=1}^{j(k)}\lambda_i\left\{\inf_{\xi\succ\mathcal{U}}h_{\nu_i^k}(f,\xi)+\Big(\log{\frac{1}{\epsilon}}\Big)\int\psi(x)\mathrm{d}{\nu_i^k}\right\}+\delta_k\\
	&\leq\inf_{\xi\succ\mathcal{U}}h_{\nu_k}(f,\xi)+\Big(\log{\frac{1}{\epsilon}}\Big)\int\psi \mathrm{d}{\nu_k}+\delta_k,
\end{align*}
which means that $\nu_k$ satisfies (2).
\end{proof}
Since each $\nu_i^k$ is ergodic, there exists $l_k\in\mathbb{N}$ and a  set
$$Y_{k,i}=\left\{x\in X:\left|\frac{1}{n}S_n\varphi(x)-\int\varphi \mathrm{d}{\nu_i^k}\right|<\delta_k\  \forall n\ge l_k\right\}$$
that satisfies $\nu_i^k(Y_{k,i})\ge1-\gamma$ for every $k\in\mathbb{N}$ and $i\in\{1,\dots,j(k)\}.$ 
\begin{lem}\label{lem 3.7}
	For $\epsilon_0$ and $\delta\in(0,1)$, we can find a sequence $\hat{n}_k\to\infty$ and a countable collection of finite sets $\mathcal{S}_{k,i}$ such that $\mathcal{S}_{k,i}$ is an $([\lambda_i\hat{n}_k],\frac{5\epsilon_0}{4})$ separated set for $Y_{k,i}$. We define 
	$$M_{k,i}:=\sum_{x\in\mathcal{S}_{k,i}}^{}\exp\left(S_{[\lambda_i\hat{n}_k]}\psi(x)\cdot\log{\frac{1}{5\epsilon_0}}\right).$$
	Then
	$$M_{k,i}\ge\exp\left\{[\lambda_i\hat{n}_k]\left(\inf_{\xi\succ\mathcal{U}}h_{\nu_i^k}(f,\xi)+\Big(\log\frac{1}{5\epsilon_0}\Big)\int\psi \mathrm{d}{\nu_i^k}-2\gamma\right)\right\}.$$
	in which the sequence $[\lambda_i\hat{n}_k]$ can be chosen such that $[\lambda_i\hat{n}_k]\ge l_k$ and $\hat{n}_k\ge 2^{m_k}$ where $m_k=m(\epsilon/2^{k+5})$ is as in the definition of the specification property.
\end{lem}
\begin{proof}
	From Lemma \ref{lem 3.4} and Lemma \ref{lem 3.3}, we have  
	\begin{align*}
	\liminf_{n\to\infty}\frac{1}{n}\log{N_n^{\nu_i^k}}(\psi,\delta,\frac{5\epsilon_0}{4})&\ge \underline{h}_{\nu_i^k}(f,\delta,\frac{5\epsilon_0}{4})+\Big(\log\frac{1}{5\epsilon_0}\Big)\int\psi d{\nu_i^k}-\frac{\gamma}{2}\\
&\ge \inf_{\xi\succ\mathcal{U}}h_{\nu_i^k}(f,\xi)+\Big(\log\frac{1}{5\epsilon_0}\Big)\int\psi d{\nu_i^k}-\frac{\gamma}{2}.
\end{align*}
Since $Q_n(Z,\psi,\epsilon)\leq P_n(Z,\psi,\epsilon)$ and $\nu_i^k(Y_{k,i})>1-\gamma$ for every $k$, it is immediate that
$$Q_n(Y_{k,i},\psi,\frac{5\epsilon_0}{4})\ge N_n^{\nu_i^k}(\psi,\gamma,\frac{5\epsilon_0}{4}).$$
Let $M(k,n)=P_n(Y_{k,i},\psi,\frac{5\epsilon_0}{4})$. For each $k$, we obtain 
\begin{align*}
   \liminf_{n\to\infty}\frac{1}{n}\log{M(k,n)}&\ge\liminf_{n\to\infty}\frac{1}{n}\log{N_n^{\nu_i^k}(\psi,\gamma,\frac{5\epsilon_0}{4})}\\
   &\ge \underline{h}_{\nu_i^k}(f,\frac{5\epsilon_0}{4},\gamma)+\Big(\log\frac{1}{5\epsilon_0}\Big)\int\psi d{\nu_i^k}-\frac{\gamma}{2}.
\end{align*}
Thus, we can choose a sequence $[\lambda_i\hat{n}_k]\to\infty$ as $k\to\infty$ satisfying the hypotheses of lemma so that 
$$\frac{1}{[\lambda_i\hat{n}_k]}\log{M(k,n)}\ge\underline{h}_{\nu_i^k}(f,\frac{5\epsilon_0}{4},\gamma)+\Big(\log\frac{1}{5\epsilon_0}\Big)\int\psi d{\nu_i^k}-\gamma.$$
For every $k$, let $\mathcal{S}_{k,i}$ be a $([\lambda_i\hat{n}_k],\frac{5\epsilon_0}{4})$-separated set of $Y_{k,i}$ that satisfies
$$\frac{1}{[\lambda_i\hat{n}_k]}\log{\sum_{x\in\mathcal{S}_{k,i}}^{}\exp\left(S_{[\lambda_i\hat{n}_k]}(x)\log{\frac{1}{5\epsilon_0}}\right)}\ge \frac{1}{[\lambda_i\hat{n}_k]}\log{M(k,n)}-\gamma.$$
Then we have that
\begin{align*}
	&\frac{1}{[\lambda_i\hat{n}_k]}\log{M_{k,i}}\ge \underline{h}_{\nu_i^k}(f,\frac{5\epsilon_0}{4},\gamma)+\Big(\log\frac{1}{5\epsilon_0}\Big)\int\psi d{\nu_i^k}-2\gamma, \text{i.e.,}\\
	&M_{k,i}\ge\exp\left\{[\lambda_i\hat{n}_k]\left(\inf_{\xi\succ\mathcal{U}}h_{\nu_i^k}(f,\xi)+\Big(\log\frac{1}{5\epsilon_0}\Big)\int\psi d{\nu_i^k}-2\gamma\right)\right\}.
\end{align*}
\end{proof}
For every $k$ and $y_i\in \mathcal{S}_{k,i}$, by the specification property, there exists $x=x(y_1,\dots,y_{j(k)})\in X$ that satisfies
$$d_{[\lambda_l\hat{n}_k]}(y_l,f^{a_l}x)<\frac{\epsilon_0}{2^{k+5}}$$
for $l\in\{1,\dots,j(k)\}$, where $a_1=0$ and $a_l=\sum_{i=1}^{l-1}[\lambda_i\hat{n}_k]+(l-1)m_k$ for $l\in\{2,\dots,j(k)\}.$
Let $\mathcal{S}_k$ be the set that consist of such $x=x(y_1,\dots,y_{j(k)})$. Define $n_k=\sum_{i=1}^{j(k)}[\lambda_i\hat{n}_k]+(j(k)-1)m_k$, and we have that $\frac{n_k}{\hat{n}_k}\to1$ as $k\to\infty$. 
We claim that $\mathcal{S}_k$ is a $(n_k,\frac{9\epsilon_0}{8})$ separated set and if $(y_1,\dots,y_{j(k)})\neq(y_1',\dots,y_{j(k)}')$ then $x=x(y_1,\dots,y_{j(k)})\neq x'=x'(y_1',\dots,y_{j(k)}')$. Actually, if $y_l\neq y_l'$ for some $l\in \{1,\dots,j(k)\}$ we have
\begin{align*}
	\frac{5\epsilon_0}{4}&<d_{[\lambda_l\hat{n}_k]}(y_l,y_l')\\
	&\leq d_{[\lambda_l\hat{n}_k]}(y_l,f^{a_l}x)+d_{[\lambda_l\hat{n}_k]}(y_l',f^{a_l}x')+d_{[\lambda_l\hat{n}_k]}(f^{a_l}x,f^{a_l}x')\\
	&<2\frac{\epsilon_0}{2^{k+5}}+d_{[\lambda_l\hat{n}_k]}(f^{a_l}x,f^{a_l}x').
\end{align*}
Thus,
$$d_{n_k}(x,x')\ge d_{[\lambda_l\hat{n}_k]}(f^{a_l}x,f^{a_l}x')>\frac{5\epsilon_0}{4}-\frac{\epsilon_0}{2^{k+4}}>\frac{9\epsilon_0}{8}.$$
Therefore, $\#\mathcal{S}_k=\#\mathcal{S}_{k,1}\cdots\#\mathcal{S}_{k,j(k)}.$ We define $M_k:=M_{k,1}\times \cdots\times M_{k,j(k)}.$
\begin{lem}\label{lem 3.8}
	For $k$ suffciently large, we have that
	\begin{align*}
		&(1)\ M_k\ge\exp\left\{n_k\Big(\inf_{\text{diam} \xi<5\epsilon_0}h_{\mu}(f,\xi)+\Big(\log\frac{1}{5\epsilon_0}\Big)\int\psi \mathrm{d}{\mu}-5\gamma\Big)\right\},\\
		&(2)\ \text{if}\  x\in\mathcal{S}_k,\ \left|\frac{1}{n_k}S_{n_k}\varphi(x)-\alpha\right|< 2\delta_k+Var(\varphi,\frac{\epsilon_0}{2^{k+5}})+\frac{1}{k}.
	\end{align*}
\end{lem}
\begin{proof}
	(1): for suffciently large $k$, 
	\begin{align*}
		M_k&\ge\exp\left\{\sum_{i=1}^{j(k)}[\lambda_i\hat{n}_k]\left(\inf_{\xi\succ\mathcal{U}}h_{\nu_i^k}(f,\xi)+\Big(\log\frac{1}{5\epsilon_0}\Big)\int\psi \mathrm{d}{\nu_i^k}-2\gamma\right)\right\}\\
		&=\exp\left\{\sum_{i=1}^{j(k)}\frac{[\lambda_i\hat{n}_k]}{\lambda_i}\left(\lambda_i\inf_{\xi\succ\mathcal{U}}h_{\nu_i^k}(f,\xi)+\lambda_i\Big(\log\frac{1}{5\epsilon_0}\Big)\int\psi \mathrm{d}{\nu_i^k}-2\lambda_i\gamma\right)\right\}\\
		&\ge\exp\left\{\sum_{i=1}^{j(k)}\hat{n}_k\left(\lambda_i\inf_{\xi\succ\mathcal{U}}h_{\nu_i^k}(f,\xi)+\lambda_i\Big(\log\frac{1}{5\epsilon_0}\Big)\int\psi \mathrm{d}{\nu_i^k}-3\lambda_i\gamma\right)\right\}\\
		&\ge\exp\left\{\hat{n}_k\left(\inf_{\xi\succ\mathcal{U}}h_{\mu}(f,\xi)+\Big(\log\frac{1}{5\epsilon_0}\Big)\int\psi \mathrm{d}{\mu}-3\gamma-\delta_k\right)\right\}\\
		&\ge\exp\left\{\hat{n}_k\left(\inf_{\xi\succ\mathcal{U}}h_{\mu}(f,\xi)+\Big(\log\frac{1}{5\epsilon_0}\Big)\int\psi \mathrm{d}{\mu}-\frac{7}{2}\gamma\right)\right\}\\
		&=\exp\left\{\frac{\hat{n}_k}{n_k}\left(n_k\left(\inf_{\xi\succ\mathcal{U}}h_{\mu}(f,\xi)+\Big(\log\frac{1}{5\epsilon_0}\Big)\int\psi \mathrm{d}{\mu}-\frac{7}{2}\gamma\right)\right)\right\}\\
		&\ge\exp\left\{{n_k}\left(\inf_{\xi\succ\mathcal{U}}h_{\mu}(f,\xi)+\Big(\log\frac{1}{5\epsilon_0}\Big)\int\psi \mathrm{d}{\mu}-4\gamma\right)\right\}\\
		&\ge\exp\left\{{n_k}\left(\inf_{diam\xi<5\epsilon_0}h_{\mu}(f,\xi)+\Big(\log\frac{1}{5\epsilon_0}\Big)\int\psi \mathrm{d}{\mu}-4\gamma\right)\right\}.\\
	\end{align*}
	\begin{align*}
		(2):&\left|S_{n_k}\varphi(x)-n_k\alpha\right|\\
		&\leq\sum_{i=1}^{j(k)}\left(\left|S_{[\lambda_i\hat{n}_k]}\varphi(f^{a_i}x)-S_{[\lambda_i\hat{n}_k]}\varphi(x_i)\right|+\left|S_{[\lambda_i\hat{n}_k]}\varphi(x_i)-[\lambda_i\hat{n}_k]\int\varphi \mathrm{d}{\nu_i^k}\right|\right)\\
		&+\left|\sum_{i=1}^{j(k)}\left([\lambda_i\hat{n}_k]\int\varphi\mathrm{d}{\nu_i^k}\right)-\left(\sum_{i=1}^{j(k)}[\lambda_i\hat{n}_k]\right)\alpha\right|+2(j(k)-1)m_k\|\varphi\|.
	\end{align*}
Now, we turn to estimate three parts in the right side. 
	\begin{align*}
		&\sum_{i=1}^{j(k)}\left|S_{[\lambda_i\hat{n}_k]}\varphi(f^{a_i}x)-S_{[\lambda_i\hat{n}_k]}\varphi(x_i)\right|\leq\sum_{i=1}^{j(k)}[\lambda_i\hat{n}_k]Var(\varphi,\frac{\epsilon_0}{2^{k+5}}),\\
		&\sum_{i=1}^{j(k)}\left|S_{[\lambda_i\hat{n}_k]}\varphi(x_i)-[\lambda_i\hat{n}_k]\int\varphi \mathrm{d}{\nu_i^k}\right|\leq\sum_{i=1}^{j(k)}[\lambda_i\hat{n}_k]\delta_k,\\
	\end{align*}
	\begin{align*}
		&\left|\sum_{i=1}^{j(k)}\left([\lambda_i\hat{n}_k]\int\varphi \mathrm{d}{\nu_i^k}\right)-\left(\sum_{i=1}^{j(k)}[\lambda_i\hat{n}_k]\right)\alpha\right|\\
		&\leq\left|\hat{n}_k\int\varphi \mathrm{d}{\nu^k}-\hat{n}_k\alpha\right|+2(\hat{n}_k-\sum_{i=1}^{j(k)}[\lambda_i\hat{n}_k])\|\varphi\|\leq \hat{n}_k\delta_k+2j(k)\|\varphi\|
	\end{align*}
	Thus, combining thes inequality, we have
	\begin{align*}
		&\sum_{i=1}^{j(k)}[\lambda_i\hat{n}_k]Var(\varphi,\frac{\epsilon_0}{2^{k+5}})+\sum_{i=1}^{j(k)}[\lambda_i\hat{n}_k]\delta_k+\hat{n}_k\delta_k+2j(k)\|\varphi\|\\
&\leq 2n_k\delta_k+n_kVar(\varphi,\frac{\epsilon_0}{2^{k+5}})+2j(k)\|\varphi\|
	\end{align*}
Then, for $k$ large enough, $\left|\frac{1}{n_k}S_{n_k}\varphi(x)-\alpha\right|< 2\delta_k+Var(\varphi,\frac{\epsilon_0}{2^{k+5}})+\frac{1}{k}.$
\end{proof}
Now, we begin to construct the Moran-like fractal. We choose a sequence of positive integers $\{N_k\}_{k\in\mathbb{N}}$ such that $N_1=1$ and
$$\lim\limits_{k\to\infty}\frac{n_{k+1}+m_{k+1}}{N_k}=0,\ \lim\limits_{k\to\infty}\frac{N_1(n_1+m_1)+\dots+N_k(n_{k+1}+m_{k+1})}{N_{k+1}}=0$$

$\mathbf{Step\ 1.}$ Constructions of intermediate sets $\{\mathcal{C}_k\}_{k=1}^{\infty}.$

For every $k$ and $\mathcal{S}_k:=\{x_i^k:i=1,\dots,\#\mathcal{S}_k\}$, we consider $\underline{i}=(i_1,\dots,i_{N_k})\in\{1,\dots,\#\mathcal{S}_k\}^{N_k}$. Using the specification property, we can choose a point $y:=y(i_1,\dots,i_{N_k})$ which satisfies
$$d_{n_k}(x_{i_j}^k,f^{a_j}y)<\frac{\epsilon_0}{2^{k+5}},\ for\ j\in\{1,\dots,N_k\},\ a_j=(j-1)(n_k+m_k).$$
We define 
$$\mathcal{C}_k=\{y(i_1,\dots,i_{N_k})\in X:(i_1,\dots,i_{N_k})\in\{1,\dots,\#\mathcal{S}_k\}^{N_k}\}.$$
Denote $c_k=N_kn_k+(N_k-1)m_k$. Then $c_k$ is the amount of time for which the orbit of points in $\mathcal{C}_k$ has been shadowed and we have the following lemma.
\begin{lem}\label{lem 3.9}
	let $\underline{i},\ \underline{j}$ be two different words in $\{1,\dots,\#\mathcal{S}_k\}^{N_k}$. Then $y_1:=y(\underline{i})$ and $y_2:=y(\underline{j})$ are $(c_k,\frac{17\epsilon_0}{16})$-separated points, i.e. $d_{c_k}(y_1,y_2)>{\frac{17\epsilon_0}{16}}$. Especially, $\#\mathcal{C}_k=(\#\mathcal{S}_k)^{N_k}.$
\end{lem}
\begin{proof}
	Since $\underline{i}\neq\underline{j}$, there exists $l$ such that $i_l\neq j_l.$ We have 
	\begin{align*}
		d_{c_k}(y_1,y_2)&\ge d_{n_k}(f^{a_l}y_1,f^{a_l}y_2)\\
&\ge d_{n_k}(x_{i_l}^k,x_{j_l}^k)-d_{n_k}(f^{a_l}y_1,x_{i_l}^k)-d_{n_k}(f^{a_l}y_2,x_{j_l}^k)\\
&>\frac{9\epsilon_0}{8}-\frac{\epsilon_0}{2^{(k+5)}}-\frac{\epsilon_0}{2^{(k+5)}}\\
&\ge\frac{17\epsilon_0}{16}.
	\end{align*}
\end{proof}

$\mathbf{Step\ 2.}$ Constuctions of $\{\mathcal{T}_k\}_{k=1}^{\infty}$, the $k$-th level of the Moran-like fractal.

Let $\mathcal{T}_1=\mathcal{C}_1$ and $t_1=c_1$. We construct $\mathcal{T}_{k+1}$ from $\mathcal{T}_k$ as follows. Let $t_{k+1}:=t_k+m_{k+1}+c_{k+1}$ and $x\in\mathcal{T}_k,\ y\in\mathcal{C}_{k+1}$. By the specification property, we can find a point $z:=z(x,y)$ that satisfies
$$d_{t_k}(x,z)<\frac{\epsilon_0}{2^{(k+6)}}\ and\ d_{c_{k+1}}(y,f^{t_k+m_{k+1}}z)<\frac{\epsilon_0}{2^{(k+6)}}.$$ 
Define $\mathcal{T}_{k+1}=\{z(x,y):x\in\mathcal{T}_k,\ y\in\mathcal{C}_{k+1}\}$, and note that $t_{k+1}$ is the amount of time for which the orbits of points in $\mathcal{T}_k$ has been shadowed. Similarly, we have the following lemma.
\begin{lem}\label{lem 3.10}
	For every $x\in\mathcal{T}_k$ and distinct points $y_1,y_2\in\mathcal{C}_{k+1}$
	$$d_{t_k}(z(x,y_1),z(x,y_2))<\frac{\epsilon_0}{2^{k+5}},\ d_{t_{k+1}}(z(x,y_1),z(x,y_2))\ge\frac{33\epsilon_0}{32}.$$
	Thus, $\mathcal{T}_k$ is a $\left(t_k,\frac{33\epsilon_0}{32}\right)$-separated set. In particular, if $z_1,z_2\in\mathcal{T}_k$, then 
	$$\overline{B}_{t_k}\left(z_1,\frac{\epsilon_0}{2^{k+5}}\right)\cap\overline{B}_{t_k}\left(z_2,\frac{\epsilon_0}{2^{k+5}}\right)=\emptyset.$$
\end{lem}
\begin{proof}
	Let $z_1=z(x,y_1),\ z_2=(x,y_2)$. Hence, we have 
	$$d_{t_k}(z_1,z_2)\leq d_{t_k}(z_1,x)+d_{t_k}(z_2,x)<\frac{\epsilon_0}{2^{(k+6)}}+\frac{\epsilon_0}{2^{(k+6)}}=\frac{\epsilon_0}{2^{(k+5)}}$$
	\begin{align*}
		d_{t_{k+1}}(z_1,z_1)&\ge d_{c_{k+1}}(f^{t_k+m_{k+1}}z_1,f^{t_k+m_{k+1}}z_2)\\
		&\ge\frac{17\epsilon_0}{16}-\frac{\epsilon_0}{2^{(k+6)}}-\frac{\epsilon_0}{2^{(k+6)}}\ge\frac{33\epsilon_0}{32}.
	\end{align*}
	The third statement is a straightforword consequence of the second inequality.
\end{proof}

As a direct result of the Lemma \ref{lem 3.10}, we have 
$$\#\mathcal{T}_k=\#\mathcal{T}_{k-1}\cdot\#\mathcal{C}_k=\#\mathcal{C}_1\dots\#\mathcal{C}_k=\#\mathcal{S}_1^{N_1}\dots\#\mathcal{S}_k^{N_k}.$$
\begin{lem}\label{lem 3.11}
	Let $z=z(x,y)\in\mathcal{T}_k$, then we have
	$$\overline{B}_{t_{k+1}}\left(z,\frac{\epsilon_0}{2^{k+6}}\right)\subset\overline{B}_{t_k}\left(x,\frac{\epsilon_0}{2^{k+5}}\right).$$
\end{lem}
\begin{proof}
	From the constructin, $d_{t_k}(z,x)<\frac{\epsilon_0}{2^{k+6}}.$ Thus, for any point $p\in\overline{B}_{t_{k+1}}(z,\frac{\epsilon_0}{2^{k+6}})$, one has
	$$d_{t_k}(p,x)\leq d_{t_k}(p,z)+d_{t_k}(z,x)\leq \frac{\epsilon_0}{2^{k+6}}\cdot2\leq\frac{\epsilon_0}{2^{k+5}}$$
	which implies that $p\in\overline{B}_{t_k}(x,\frac{\epsilon_0}{2^{k+5}})$. Therefore, the result has been proved.
\end{proof}

$\mathbf{Step\ 3.}$ Constructions of the Moran-like fractal contained in $K_{\alpha}.$

Let $F_k=\cup_{x\in\mathcal{T}_k}\overline{B}_{t_k}(x,\frac{\epsilon_0}{2^{k+5}})$. By Lemma \ref{lem 3.11}, $F_{k+1}\subset F_k$ and we have a decreasing sequence of compact sets, the set $F=\cap_kF_k$ is non-empty. Besides, every point $p\in F$ can be uniquely represented by a sequence $\underline{p}=(\underline{p}_1,\underline{p}_2,\dots)$, where each $\underline{p}_i=(\underline{p}_1^i,\dots,\underline{p}_{N_i}^i)\in\{1,2,\dots,\#\mathcal{S}_i\}^{N_i}.$ Thus, every point in $\mathcal{T}_k$ can be uniquely represented by a finite word $\underline{p}=(\underline{p}_1,\dots,\underline{p}_k).$
\begin{lem}\label{lem 3.12}
	Given $z=z(\underline{p}_1,\dots,\underline{p}_k)\in\mathcal{T}_k$, for all $i\in\{1,\dots,k\}$ and all $l\in\{1,\dots,N_i\}$ we have that
	$$d_{n_i}(x_{p_l^i}^i,f^{t_{i-1}+m_i+(l-1)(m_i+n_i)}z)<\epsilon_0.$$
\end{lem}
\begin{proof}
Given $i\in\{1,\dots,k\}$ and $l\in\{1,\dots,N_i\}.$ For $m\in\{1,\dots,k-1\}$, let $z_m=z(\underline{p}_1,\dots,\underline{p}_m)\in \mathcal{T}_m$. Let $a=t_{i-1}+m_i,\ b=(l-1)(m_i+n_i)$. Then
	\begin{align*}
	d_{n_i}(x_{p_l^i}^i,f^{a+b}z)&\leq d_{n_i}(x_{p_l^i}^i,f^{b}y{\underline{p}_i})+d_{n_i}(f^by_{\underline{p}_i}^i,f^{a+b}z)+d_{n_i}(f^{a+b}z_i,f^{a+b}z)\\
&<\frac{\epsilon_0}{2^{i+5}}+d_{c_i}(y_{\underline{p}_i}^i,f^az)+d_{t_i}(z_i,z)\\
&<\frac{\epsilon_0}{2^{i+5}}+\frac{\epsilon_0}{2^{i+6}}+d_{t_i}(z_i,z_{i+1})+\cdots+d_{t_i}(z_{k-1},z_k)\\
&<\frac{\epsilon_0}{2^{i+5}}+\frac{\epsilon_0}{2^{i+6}}+\frac{\epsilon_0}{2^{i+6}}+\frac{\epsilon_0}{2^{i+7}}+\cdots+\frac{\epsilon_0}{2^{k+5}}\\
&<\sum_{m=1}^{k}\frac{\epsilon_0}{2^{m+5}}+\frac{\epsilon_0}{2^{i+6}}<\epsilon_0.
\end{align*}
\end{proof}

\begin{lem}\label{lem 3.13}
	Under the above conditions, $F\subset K_{\alpha}.$
\end{lem}
\begin{proof}
	For any $x\in F$, we only need to show $\lim\limits_{n\to\infty}\left|\frac{1}{n}S_n\varphi(x)-\alpha\right|=0.$ Thus, we need to estimate $\left|S_n\varphi(x)-n\alpha\right|$. We can divide the estimation into 3 steps.
	
	$\mathbf{Step\ 1.}$ Estimation on $\mathcal{C}_k$ for $k\ge1.$

	Supposing $y\in\mathcal{C}_k$, let us estimate $\left|\sum_{p=0}^{c_k-1}\varphi(f^py)-c_k\alpha\right|$. By the construction of $\mathcal{C}_k$, there exists $(i_1,\dots,i_{N_k})\in(1,\dots,\#\mathcal{S}_k)^{N_k}$  and $x_{i_j}^k\in\mathcal{S}_k $ satisfying
	$$d_{n_k}(x_{i_j}^k,f^{a_j}y)<\frac{\epsilon_0}{2^{k+5}}\ for\ j=1,\dots,N_k.$$
	Since 
	\begin{align*}
	[0,c_k-1]&=[0,N_kn_k+(N_k-1)m_k-1]\\
	&=\bigcup_{j=1}^{N_k}[a_j,a_j+n_k-1]\cup\bigcup_{j=1}^{N_k-1}[a_j+n_k,a_j+n_k+m_k-1]
\end{align*}
On $[a_j,a_j+n_k-1]$, we have 
\begin{align*}
\left|\sum_{p=0}^{n_k-1}\varphi(f^{a_j+p}y)-n_k\alpha\right|\leq &\left|\sum_{p=0}^{n_k-1}\varphi(f^{a_j+p}y)-\sum_{p=0}^{n_k-1}\varphi(f^px_{i_j}^k)\right|\\
&+\left|\sum_{p=0}^{n_k-1}\varphi(f^px_{i_j}^k)-n_k\alpha\right|\\
&\leq n_k\Big(Var\left(\varphi,\frac{\epsilon_0}{2^{k+5}}\right)+2\delta_k+Var(\varphi,\frac{\epsilon_0}{2^{k+5}})+\frac{1}{k}\Big).
\end{align*}
On $[a_j+n_k,a_j+n_k+m_k-1]$, we have
$$\left|\sum_{p=0}^{m_k-1}\varphi(f^{a_j+n_k+p}y)-m_k\alpha\right|\leq m_k(\|\varphi\|+\alpha)\leq 2m_k\|\varphi\|.$$
Combining these inequalities, we have
\begin{align*}
&\left|\sum_{p=0}^{c_k-1}\varphi(f^py)-c_k\alpha\right|\\
\leq& N_kn_k\Big(2Var\left(\varphi,\frac{\epsilon_0}{2^{k+5}}\right)+2\delta_k+\frac{1}{k}\Big)+2(N_k-1)m_k\|\varphi\|.
\end{align*}
$\mathbf{Step\ 2.}$ Estimation on $\mathcal{T}_k$ for $k\ge2.$

For $k\ge2$, let us estimate 
$$A_k:=\max_{x\in\mathcal{T}_k}\left|\sum_{p=0}^{t_k-1}\varphi(f^pz)-t_k\alpha\right|.$$
For any $z\in\mathcal{T}_k$, there exists $x\in\mathcal{T}_{k-1}$ and $y\in\mathcal{C}_k$ satisfying
$$d_{t_{k-1}}(x,z)<\frac{\epsilon_0}{2^{k+5}},\ d_{c_k}(y,f^{t_k+m_{k-1}}z)<\frac{\epsilon_0}{2^{k+5}}$$
On $[0,t_{k-1}+m_k-1]$, we have $|\varphi-\alpha|\leq 2\|\varphi\|$, while on $[t_{k-1}+m_k,t_k-1],$ we use the specification property and estimation on $\mathcal{C}_k$ to obtain
\begin{align*}
	A_k\leq&2(t_{k-1}+m_k)\|\varphi\|+c_kVar(\varphi,\frac{\epsilon_0}{2^{k+5}})\\
	&+N_kn_k\Big(2Var\left(\varphi,\frac{\epsilon_0}{2^{k+5}}\right)+2\delta_k+\frac{1}{k}\Big)+2(N_k-1)m_k\|\varphi\|.
\end{align*}
And by the choice of $N_k\ and\ n_k$, we have
$$\frac{t_{k-1}+m_k}{N_k}\to0,\ \frac{(N_k-1)m_k}{N_kn_k}\to0\ \text{as}\ k\to\infty.$$
Since $t_k$, taking $k\to\infty$ we have 
$$\frac{A_k}{t_k}\leq\frac{2(t_{k-1}+m_k)\|\varphi\|}{N_k}+3Var\left(\varphi,\frac{\epsilon_0}{2^{k+5}}\right)+2\delta_k+\frac{2(N_k-1)m_k\|\varphi\|}{N_kn_k}+\frac{1}{k}\to0.$$

$\mathbf{Step\ 3.}$ Estimation on $F$.

For any $x\in F,\ n>t_1$, there exists unique $k\ge1\ and\ 0\leq j\leq N_{k+1}-1$ such that $t_k+j(n_{k+1}+m_{k+1})<n\leq t_k+(j+1)(n_{k+1}+m_{k+1}).$ On one hand, since $x\in F$, there exists $z\in\mathcal{T}_k$ such that $d_{t_{k+1}}(x,z)<\frac{\epsilon_0}{2^{k+6}},$ on the other hand, exist $\overline{x}\in\mathcal{T}_k\ and\ y\in\mathcal{C}_{k+1}$ satisfying
$$d_{t_k}(\overline{x},z)<\frac{\epsilon_0}{2^{k+6}},\ d_{c_{k+1}}(y,f^{t_k+m_{k+1}}z)<\frac{\epsilon_0}{2^{k+6}}.$$
Hence, we have
$$d_{t_k}(\overline{x},x)<\frac{\epsilon_0}{2^{k+5}},\ d_{c_{k+1}}(y,f^{t_k+m_{k+1}}x)<\frac{\epsilon_0}{2^{k+5}}.$$
Furthermore, by the constructin of $\mathcal{C}_{k+1}$, there exists $x_{i_1}^{k+1},\dots,x_{i_j}^{k+1}\in\mathcal{S}_{k+1}$ such that for $t=1,\dots,j$ we have
\begin{align*}
d_{n_{k+1}}(x_{i_t}^{k+1},f^{t_k+m_{k+1}+a_t}x)&\leq d_{n_{k+1}}(x_{i_t}^{k+1},f^{a_t}y)+d_{n_{k+1}}(f^{a_t}y,f^{t_k+m_{k+1}+a_t}x)\\
&<\frac{\epsilon_0}{2^{k+5}}+\frac{\epsilon_0}{2^{k+5}}=\frac{\epsilon_0}{2^{k+4}},
\end{align*}
where $a_t=(n_{k+1}+m_{k+1})(t-1).$ Besides, 
$$[0,n-1]=[0,t_k-1]\cup\bigcup_{t=1}^{j}[t_k+a_t,t_k+a_{t+1}-1]\cup[t_k+j(m_{k+1}+n_{k+1}),n-1].$$
On $[0,t_k-1]$, we have
\begin{align*}
	\left|\sum_{p=0}^{t_k-1}\varphi(f^px)-t_k\alpha\right|&\leq\left|\sum_{p=0}^{t_k-1}\varphi(f^px)-\sum_{p=0}^{t_k-1}\varphi(f^p{\overline{x}})\right|+\left|\sum_{p=0}^{t_k-1}\varphi(f^px)-t_k\alpha\right|\\
	&\leq t_k\cdot Var\left(\varphi,\frac{\epsilon_0}{2^{k+5}}\right)+A_k,
\end{align*}
On $[t_k+a_t,t_k+a_{t+1}-1]$, we have
\begin{align*}
	&\left|\sum_{p=t_k+a_t}^{t_k+a_{t+1}-1}\varphi(f^px)-(m_{k+1}+n_{k+1})\alpha\right|\\
	\leq&\left|\sum_{p=t_k+a_t}^{t_k+a_{t+1}-1}\varphi(f^px)-\sum_{p=t_k+a_t+m_{k+1}}^{t_k+a_{t+1}-1}\varphi(f^px)\right|\\
	&+\left|\sum_{p=t_k+a_t+m_{k+1}}^{t_k+a_{t+1}-1}\varphi(f^px)-\sum_{p=0}^{n_{k+1}-1}\varphi(f^px_{i_j}^{k+1})\right|\\
	&+\left|\sum_{p=0}^{n_{k+1}-1}\varphi(f^px_{i_j}^{k+1})-n_{k+1}\alpha\right|+m_{k+1}|\alpha|\\
	\leq&2m_{k+1}\|\varphi\|+n_{k+1}\cdot Var\left(\frac{\epsilon_0}{2^{k+4}}\right)+n_{k+1}\Big(2\delta_{k+1}+Var(\varphi,\frac{\epsilon_0}{2^{k+6}})+\frac{1}{k+1}\Big).
\end{align*}
Finally, on $[t_k+j(m_{k+1}+n_{k+1}),n-1]$, we have
\begin{align*}
&\left|\sum_{p=t_k+j(n_{k+1}+m_{k+1})}^{n-1}\varphi(f^px)-(n-t_k-j(n_{k+1}+m_{k+1}))\alpha\right|\\
&\leq2(n-t_k-j(n_{k+1}+m_{k+1}))\|\varphi\|\leq2(n_{k+1}+m_{k+1})\|\varphi\|.
\end{align*}
Combining the above three estimation, we have
\begin{align*}
	&\left|\sum_{p=0}^{n-1}\varphi(f^px)-n\alpha\right|\leq t_k\cdot Var\left(\varphi,\frac{\epsilon_0}{2^{k+5}}\right)+A_k+2(n_{k+1}+m_{k+1})\|\varphi\|\\
&+j\left(2m_{k+1}\|\varphi\|+n_{k+1}\cdot Var\left(\frac{\epsilon_0}{2^{k+4}}\right)+n_{k+1}\Big(2\delta_{k+1}+Var(\varphi,\frac{\epsilon_0}{2^{k+6}})+\frac{1}{k+1}\Big)\right).\\
\end{align*}
Hence,
\begin{align*}
	\left|\frac{1}{n}\sum_{p=0}^{n-1}\varphi(f^px)-\alpha\right|\leq&\frac{A_k}{t_k}+Var\left(\varphi,\frac{\epsilon_0}{2^{k+4}}\right)\\
&+2\left(\frac{n_{k+1}+m_{k+1}}{N_k}+\frac{m_{k+1}}{n_{k+1}}\right)\|\varphi\|+\frac{1}{k+1}.
\end{align*}
Letting $n\to\infty$, $k\to\infty$ and the right side $\to0$.
The proof has been completed.
\end{proof}

We now define the measure on $F$ which satisfies the pressure distribution principle.
For each $k$ and every $z=z(\underline{p}_1,\dots,\underline{p}_k)\in\mathcal{T}_k$, we define $\mathcal{L}(z):=\mathcal{L}(\underline{p}_1)\dots\mathcal{L}(\underline{p}_k)$ and $\underline{p}_i=(p_1^i,\dots,p_1^{N_i})\in\{1,\dots,\#\mathcal{S}_i\}^{N_i}$. Let
$$\mathcal{L}(\underline{p}_i):=\prod_{l=1}^{N_i}\exp\left\{S_{n_i}\psi(x_{p_l^i}^i)\log{\frac{1}{5\epsilon_0}}\right\},\ \nu_k:=\sum_{x\in\mathcal{T}_k}^{}\delta_z\mathcal{L}(z).$$ 
Normalizing $\nu_k$ to obtain a sequence of probability measures $\mu_k$. We let
$$\kappa_k:=\sum_{x\in\mathcal{T}_k}^{}\mathcal{L}(z),\ \mu_k:=\frac{1}{\kappa_k}\nu_k$$
\begin{lem}\label{lem 3.14}
	$\kappa_k=\prod_{i=1}^{k}M_i^{N_i}.$
\end{lem}
\begin{proof}
	We note that
	\begin{align*}
		\kappa_k=\sum_{x\in\mathcal{T}_k}^{}\mathcal{L}(z)&=\sum_{\underline{p}_1\in\{1,\dots,\#\mathcal{S}_1\}^{N_1}}^{}\dots\sum_{\underline{p}_k\in\{1,\dots,\#\mathcal{S}_k\}^{N_k}}^{}(\mathcal{L}(\underline{p}_1)\cdots\mathcal{L}(\underline{p}_1))\\
		&=\left(\sum_{\underline{p}_1\in\{1,\dots,\#\mathcal{S}_1\}^{N_1}}^{}\mathcal{L}(\underline{p}_1)\right)\cdots\left(\sum_{\underline{p}_1\in\{1,\dots,\#\mathcal{S}_k\}^{N_k}}^{}\mathcal{L}(\underline{p}_k)\right).
	\end{align*}
From the definition of $\mathcal{L}(\underline{p}_i)$, for every $i$ we have
$$\left(\sum_{\underline{p}_i\in\{1,\dots,\#\mathcal{S}_i\}^{N_i}}^{}\mathcal{L}(\underline{p}_1)\right)=\prod_{l=1}^{N_i}\left\{\sum_{p^i_l=1}^{\#\mathcal{S}_i}\exp\left\{S_{n_i}\psi(x_{p_l^i}^i)\log{\frac{1}{5\epsilon_0}}\right\}\right\}=M_i^{N_i}$$
Hence, $\kappa_k=\prod_{i=1}^{k}M_i^{N_i}.$
\end{proof}
\begin{lem}\cite[Lemma 3.10]{th10} \label{lem 3.15}
	Suppose $\nu$ is an accumulation point of the sequence of probability measures $\mu_k$. Then $\nu(F)=1$.
\end{lem}
\begin{proof}
Let $\nu=\lim\limits_{k\to\infty}\mu_{l_k}$ for some $l_k\to\infty$. For any fixed $l$ and all $p\ge0$, since $\mu_{l+p}(F_{l+p})=1$ and $F_{l+p}\subset F_l$, we have $\mu_{l+p}(F_l)=1$. Thus, $\nu(F_l)\ge\limsup_{k\to\infty}\mu_{l_k}(F_{l})=1$. It implies that 
$$\nu(F)=\lim\limits_{l\to\infty}\nu(F_l)=1.$$
\end{proof}

Let $\mathcal{B}:=B_n(q,{\epsilon_0/2})$ be an arbitrary ball which intersects $F$. There exists unique $k$ that satisfies $t_k\leq n<t_{k+1}$ and unique $j\in\{0,\dots,N_{k+1}-1\}$ that satisfies $t_k+j(n_{k+1}+m_{k+1})\leq n<t_k+(j+1)(n_{k+1}+m_{k+1}).$ Thus, we have the following lemma which reflects the fact that the number of points in $\mathcal{B}\cap\mathcal{T}_{k+1}$ is restricted.
\begin{lem}\label{lem 3.16}
	Suppose $\mu_{k+1}(\mathcal{B})>0$, then there exists a unique $x\in\mathcal{T}_k$ and $i_1,\dots,i_j\in\{1,\dots,\#\mathcal{S}_{k+1}\}$ satisfying
	$$\nu_{k+1}(\mathcal{B})\leq\mathcal{L}(x)\left(\prod_{l=1}^{j}\exp\left(S_{n_{k+1}}\psi(x_l^{k+1})\log{\frac{1}{5\epsilon_0}}\right)\right)M_{k+1}^{n_{k+1}-j}.$$
\end{lem}
\begin{proof}	
	Since we suppose $\mu_{k+1}(\mathcal{B})>0$, then $\mathcal{T}_{k+1}\cap\mathcal{B}\neq\emptyset.$ Let $z_1=z(x_1,y_1),\ z_2=z(x_2,y_2)\in\mathcal{T}_k\cap\mathcal{B}$, where $x_1,x_2\in\mathcal{T}_k\ and\ y_1,y_2\in\mathcal{C}_{k+1}.$ Let $y_1=y(i_1,\dots,i_{N_{k+1}}),\ y_2=y(l_1,\dots,l_{N_{k+1}}).$ We have
	\begin{align*}
	d_{t_k}(x_1,x_2)&\leq d_{t_k}(x_1,z_1)+d_{t_k}(z_1,z_2)+d_{t_k}(z_2,x_2)\\
&<\frac{\epsilon_0}{2^{k+6}}+\epsilon_0+<\frac{\epsilon_0}{2^{k+6}}<\frac{33\epsilon_0}{32}
\end{align*}
and thus we have a contradiction with the fact that $\mathcal{T}_k$ is $(t_k,\frac{33\epsilon_0}{32})$ separated. Similarly, we prove that $i_t=l_t$ for $t=\{1,\dots,j\}.$ Suppose there exists $t,\ 1\leq t\leq j$, such that $i_t\neq l_t$. By the specification property, we have 
$$d_{n_{k+1}}(x_{i_t}^{k+1},f^{a_t}y_1)<\frac{\epsilon_0}{2^{k+5}},\ d_{n_{k+1}}(x_{l_t}^{k+1},f^{a_t}y_2)<\frac{\epsilon_0}{2^{k+5}}$$
and
$$d_{c_{k+1}}(y_1,f^{t_k+m_{k+1}}z_1)<\frac{\epsilon_0}{2^{k+6}},\ d_{c_{k+1}}(y_1,f^{t_k+m_{k+1}}z_1)<\frac{\epsilon_0}{2^{k+6}}.$$
Thus, 
\begin{align*}
	d_{n_{k+1}}(x_{i_t}^{k+1},x_{l_t}^{k+1})&\leq d_{n_{k+1}}(x_{i_t}^{k+1},f^{a_t}y_1)+d_{c_{k+1}}(y_1,f^{t_k+m_{k+1}}z_1)\\
	&+d_n(z_1,z_2)+d_{c_{k+1}}(y_2,f^{t_k+m_{k+1}}z_2)+d_{n_{k+1}}(x_{l_t}^{k+1},f^{a_t}y_2)\\
	&<\epsilon_0+\frac{\epsilon_0}{2^{k+5}}\cdot4<\frac{9\epsilon_0}{8}
\end{align*}
which contradicts the fact that $\mathcal{S}_{k+1}$ is $\left(n_{k+1},\frac{9\epsilon_0}{8}\right)$ separated.

Since $x$ and $(i_1,\dots,i_j)$ is the same for all points $z=z(x,y)$, $y=y(i_1,\dots,i_{N_{k+1}})$ which lies in $\mathcal{T}_{k+1}\cap\mathcal{B}$, we can conclude that there are at most $M_{k+1}^{N_{k+1}-j}$ such points. Hence,
\begin{align*}
\nu_{k+1}(\mathcal{B})&\leq\mathcal{L}(x)\sum_{\underline{p}_{k+1}}^{}\mathcal{L}(\underline{p}_{k+1})\\
&\leq\mathcal{L}(x)\left(\prod_{l=1}^{j}\exp\left(S_{n_{k+1}}\psi(x_l^{k+1})\log{\frac{1}{5\epsilon_0}}\right)\right)M_{k+1}^{n_{k+1}-j}.
\end{align*}
\end{proof}

\begin{lem}\label{lem 3.17}
	Let $x\in\mathcal{T}_k$ and $i_1,\dots,i_j$ be as before. Then 
	\begin{align*}
		&\mathcal{L}(x)\prod_{l=1}^{j}\exp\left(S_{n_k+1}\psi(x_{i_l}^{k+1})\cdot\log\frac{1}{5\epsilon_0}\right)\\
	&\leq\exp\left(\left(S_n\psi(q)+2nVar(\psi,\epsilon_0)+\left(jm_{k+1}+\sum_{i=1}^{k}N_im_i\right)\|\psi\|\right)\log{\frac{1}{5\epsilon_0}}\right).
	\end{align*}
\end{lem}
\begin{proof}
	Let $x:=x(\underline{p}_1,\dots,\underline{p}_k)$. It follows from Lemma \ref{lem 3.13} that
	$$d_{n_i}(x_{p_l^i}^i,f^{t_{i-1}+m_i+(l-1)(m_i+n_i)}x)<\epsilon_0.$$
	for $i\in\{1,\dots,k\}$ and $l\in\{1,\dots,N_i\}$ and
	\begin{align*}
		S_{n_i}\psi(x_{p_l^i}^i)\leq& S_{n_i}\psi(x_{p_l^i}^i)-S_{n_i}\psi(f^{t_{i-1}+m_i+(l-1)(m_i+n_i)}x)\\
		&+S_{n_i}\psi(f^{t_{i-1}+m_i+(l-1)(m_i+n_i)}x)\\
		\leq& n_iVar(\psi,\epsilon_0)+S_{n_i}\psi(f^{t_{i-1}+m_i+(l-1)(m_i+n_i)}x)
	\end{align*}
	and
	\begin{align*}
		S_{n_{k+1}}\psi(x_{i_l}^{k+1})\leq& S_{n_{k+1}}\psi(x_{i_l}^{k+1})-S_{n_{k+1}}\psi(f^{t_k+m_{k+1}+(l-1)(n_{k+1}+m_{k+1})}z)\\
		&+S_{n_{k+1}}\psi(f^{t_k+m_{k+1}+(l-1)(n_{k+1}+m_{k+1})}z)\\
		\leq&n_{k+1}Var\left(\varphi,\frac{\epsilon_0}{2^{k+5}}\right)+S_{n_{k+1}}\psi(f^{t_k+m_{k+1}+(l-1)(n_{k+1}+m_{k+1})}z).
	\end{align*}
Thus we have
\begin{align*}
&\sum_{i=1}^{k}\sum_{l=1}^{N_i}S_{n_i}\psi(x_{p_l^i}^i)\cdot\log{\frac{1}{5\epsilon_0}}\\
\leq&\left\{\sum_{i=1}^{k}\sum_{l=1}^{N_i}n_iVar(\psi,\epsilon_0)+\sum_{i=1}^{k}\sum_{l=1}^{N_i}S_{n_i}\psi(f^{t_{i-1}+m_i+(l-1)(m_i+n_i)}x)\right\}\cdot\log{\frac{1}{5\epsilon_0}}\\
\leq&\left\{t_kVar(\psi,\epsilon_0)+S_{t_{k}}\psi(x)+\sum_{i=1}^{k}N_im_i\|\psi\|\right\}\cdot\log{\frac{1}{5\epsilon_0}}
\end{align*}
and
\begin{align*}
	\sum_{l=1}^{j}S_{n_{k+1}}\psi(x_{i_l}^{k+1})\leq(n-t_k)Var\left(\psi,\frac{\epsilon_0}{2^{k+6}}\right)+S_{n-t_k}\psi(f^{t_k}z)+jm_{k+1}\|\psi\|.
\end{align*}
Since $d_{t_k}(x,q)\leq d_{t_k}(x,z)+d_{t_k}(q,z)<\epsilon_0$, we have
\begin{align*}
S_{t_{k}}\psi(x)+S_{n-t_k}\psi(f^{t_k}z)=&S_{t_{k}}\psi(x)-S_{t_{k}}\psi(q)+S_{n-t_k}\psi(f^{t_k}z)\\
&-S_{n-t_k}\psi(f^{t_k}q)+S_n\psi(q)\\
\leq &t_kVar(\psi,\epsilon_0)+(n-t_k)Var(\psi,\epsilon_0)+S_n\psi(q).
\end{align*}
Combining the above inequalities, we have that
\begin{align*}
	&\mathcal{L}(x)\prod_{l=1}^{j}\exp\left(S_{n_k+1}\psi(x_{i_l}^{k+1})\cdot\log\frac{1}{5\epsilon_0}\right)\\
&\leq\exp\left\{\left(S_n\psi(q)+2nVar(\psi,\epsilon_0)+\left(jm_{k+1}+\sum_{i=1}^{k}N_im_i\right)\|\psi\|\right)\log{\frac{1}{5\epsilon_0}}\right\}.
\end{align*}
\end{proof}
Similarly, we give the following Lemma for  the points contained in $\mathcal{B}\cap\mathcal{T}_{k+p} $ without proof.
\begin{lem}
	For any $p\ge1$, suppose $\mu_{k+p}(\mathcal{B})>0$. Let $x\in\mathcal{T}_k$ and $i_1,\dots,i_j$ be as before. Then every $x\in\mathcal{B}\cap\mathcal{T}_{k+p}$ descends from some point in $\mathcal{T}_k\cap\mathcal{B}$. We have
	$$\nu_{k+p}(\mathcal{B})\leq\mathcal{L}(x)\left\{\prod_{l=1}^{j}\exp\left(S_{n_{k+1}}\psi(x_l^{k+1})\log{\frac{1}{5\epsilon_0}}\right)\right\}M_{k+1}^{n_{k+1}-j}\cdots M_{k+p}^{n_{k+p}}.$$
\end{lem}
Since $\mu_{k+p}=\frac{1}{\kappa_{k+p}}\nu_{k+p}$ and $\kappa_{k+p}=\kappa_kM_{k+1}^{N_{k+1}}\cdots M_{k+p}^{N_{k+p}}$, immediately, we have 
\begin{align*}
&\mu_{k+p}(\mathcal{B})\leq\frac{1}{\kappa_kM_{k+1}^j}\mathcal{L}(x)\left\{\prod_{l=1}^{j}\exp\left(S_{n_{k+1}}\psi(x_l^{k+1})\log{\frac{1}{5\epsilon_0}}\right)\right\}\\
&\leq\frac{1}{\kappa_kM_{k+1}^j}\exp\left\{\left(S_n\psi(q)+2nVar(\psi,\epsilon_0)+\left(jm_{k+1}+\sum_{i=1}^{k}N_im_i\right)\|\psi\|\right)\log{\frac{1}{5\epsilon_0}}\right\}.
\end{align*}

\begin{lem}
	For suffciently large $n$, we have 
	$$\kappa_kM_{k+1}^j\ge\exp\left(\left(\inf_{diam\xi<5\epsilon_0}h_{\mu}(f,\xi)+\Big(\log{\frac{1}{5\epsilon_0}}\Big)\int\psi \mathrm{d}{\mu}\cdot-5\gamma\right)n\right).$$
\end{lem}
\begin{proof}
	From Lemma \ref{lem 3.8}, we have
	$$M_k\ge\exp\left\{n_k\left(\inf_{diam\xi<5\epsilon_0}h_{\mu}(f,\xi)+\Big(\log{\frac{1}{5\epsilon_0}}\Big)\int\psi \mathrm{d}{\mu}-4\gamma\right)\right\}.$$
	Taking a note that $C:=\inf_{diam\xi<5\epsilon_0}h_{\mu}(f,\xi)+\Big(\log{\frac{1}{5\epsilon_0}}\Big)\int\psi \mathrm{d}{\mu}$ and we have
	\begin{align*}
		&\kappa_kM_{k+1}^j\\
		&=M_1^{N_1}\cdots M_k^{N_k}\ge\exp\left((C-4\gamma)(n_1N_1+\cdots+n_kN_k+n_{k+1}j)\right)\\
		&\ge\exp\left((C-5\gamma)((n_1+m_1)N_1+\cdots+(n_k+m_k)N_k+(n_{k+1}+m_{k+1})(j+1))\right)\\
		&=\exp\left((C-5\gamma)(t_k+(n_{k+1}+m_{k+1})(j+1))\right)\\
		&\ge\exp\left((C-5\gamma)n\right).
	\end{align*}
\end{proof}
\begin{lem}\label{lem 3.20}
	For suffciently large $n$, we have 
	$$\limsup_{k\to\infty}\mu_k\left(B_n\left(q,\frac{\epsilon_0}{2}\right)\right)\leq\exp\left\{-n(C-6\gamma-Var(\psi,\epsilon_0))+S_n\psi(q)\cdot\log{\frac{1}{5\epsilon_0}}\right\}.$$
\end{lem}
\begin{proof}
	For suffciently large $n$, and any $p>1$,
	\begin{align*}
		&\mu_{k+p}(\mathcal{B})\\
		&\leq\frac{1}{\kappa_kM_{k+1}^j}\exp\left\{\left(S_n\psi(q)+2nVar(\psi,\epsilon_0)+\left(jm_{k+1}+\sum_{i=1}^{k}N_im_i\right)\|\psi\|\right)\log{\frac{1}{5\epsilon_0}}\right\}\\
		&\leq\frac{1}{\kappa_kM_{k+1}^j}\exp\left\{S_n\psi(q)\cdot\log{\frac{1}{5\epsilon_0}}+n(2Var(\psi,\epsilon_0)+\gamma)\right\}\\
		&\leq\exp\left\{-n(C-5\gamma)+S_n\psi(q)\cdot\log{\frac{1}{5\epsilon_0}}+n(2Var(\psi,\epsilon_0)+\gamma)\right\}\\
		&=\exp\left\{-n(C-6\gamma-2Var(\psi,\epsilon_0))+S_n\psi(q)\cdot\log{\frac{1}{5\epsilon_0}}\right\}.
	\end{align*}
	We arrive the sencond inequality is because $n_k\gg m_k$, thus
	$$\frac{jm_{k+1}+\sum_{i=1}^{k}N_im_i}{n}\leq\frac{jm_{k+1}+\sum_{i=1}^{k}N_im_i}{t_k+j(n_{k+1}m_{k+1})}\to0,\ as\ k\to\infty.$$
\end{proof}
Now we give the generalized pressure distribution principle which is a modification of \cite[Proposition 3.2]{th10}.
\begin{lem}\label{lem 3.21}
	Let $f:X\to X$ be a continuous transformation and $\epsilon>0$. For $Z\subset X$ and a constant $s\ge0$, suppose there exist a constant $C>0$, a sequence of Borel probability measure $\mu_k$ and integer $N$ satisfying
	$$\limsup_{k\to\infty}\mu_k\left(B_n\left(x,\frac{\epsilon}{2}\right)\right)\leq C\exp\left\{-sn+S_n\psi(x)\log{\frac{1}{5\epsilon}}\right\}$$
	for every $B_n(x,\epsilon/2)$ such that $B_n\left(x,\epsilon/2\right)\cap Z\neq\emptyset$ and $n\ge N$. Furthermore, assume that at least one accumulate point $\nu$ of $\mu_k$ satisfies $\nu(Z)>0$. Then $M_{\epsilon/2}\left(f,Z,d,\psi\right)\ge s.$
\end{lem}
\begin{proof}
	Let  $\mu_{k_j}$ be the sequence of measures which converges to $\nu$. Let $\Gamma=\{B_{n_i}(x_i,\epsilon/2)\}_{i\in I}$ cover $Z$ with  $n_i\ge N$. We can assume that $B_{n_i}(x,\epsilon/2)\cap Z\neq\emptyset$ for every $i.$ Then
	\begin{align*}
		\sum_{i\in I}^{}\exp\left\{-sn_i+S_{n_i}\psi(x_i)\log{\frac{2}{\epsilon}}\right\}&\ge\sum_{i\in I}^{}\exp\left\{-sn_i+S_{n_i}\psi(x_i)\log{\frac{1}{5\epsilon}}\right\}\\
		&\ge\frac{1}{C}\sum_{i\in I}^{}\limsup_{k\to\infty}\mu_k\left(B_{n_i}(x_i,\epsilon)\right)\\
		&\ge\frac{1}{C}\sum_{i\in I}^{}\liminf_{j\to\infty}\mu_{k_j}\left(B_{n_i}(x_i,\epsilon)\right)\\
&\ge\frac{1}{C}\sum_{i\in I}^{}\nu\left(B_{n_i}(x_i,\epsilon)\right)\\
&\ge\frac{1}{C}\nu(Z)>0.
	\end{align*}
	Thus, we conclude that $m_{\epsilon/2}(f,Z,s,d,\psi)>0$ and $M_{\epsilon/2}(f,Z,d,\psi)\ge s.$
\end{proof}
By Lemma \ref{lem 3.13} we have that $M_{\epsilon_0}(f,K_{\alpha},d,\psi)\ge M_{\epsilon_0}(f,F,d,\psi)$. By Lemma \ref{lem 3.20} and Lemma \ref{lem 3.21}, we have that 
$$\inf_{diam\xi<5\epsilon_0}h_{\mu}(f,\xi)+\Big(\log{\frac{1}{5\epsilon_0}}\Big)\int\psi \mathrm{d}{\mu}\leq M_{\epsilon_0/2}(f,K_{\alpha},d,\psi)+6\gamma+Var(\psi,\epsilon_0).$$
Combining this inequality with (\ref{3.2}), we have that
\begin{align*}
	S-\gamma&\leq\frac{\inf_{diam\xi<5\epsilon_0}h_{\mu}(f,\xi)+\Big(\log{\frac{1}{5\epsilon_0}}\Big)\int\psi \mathrm{d}{\mu}}{\log{\frac{1}{5\epsilon_0}}}\\
	&\leq\frac{M_{\epsilon_0/2}(f,K_{\alpha},d,\psi)+6\gamma+Var(\psi,\epsilon_0)}{\log{\frac{2}{\epsilon_0}}}\cdot\frac{\log{\frac{2}{\epsilon_0}}}{\log{\frac{1}{5\epsilon_0}}}.
\end{align*}
As $\gamma>0$ is arbitrary and $\gamma\to0\Rightarrow\epsilon_0\to0$, we obtain  
$$S\leq\overline{\rm{mdim}}_M^B(f,K_{\alpha},d,\psi).$$ 
That is, we have 
$${\rm{H}}_{\varphi}\overline{\rm{mdim}}_M(f,K_{\alpha},d,\psi)\leq\overline{\rm{mdim}}_M^B(f,K_{\alpha},d,\psi).$$
\end{proof}
If without the initially assumption that $\overline{\rm{mdim}}_M^B(f,K_{\alpha},d,\psi)$, \\$\overline{\rm{mdim}}_M^P(f,K_{\alpha},d,\psi)$
$\Lambda_{\varphi}\overline{\rm{mdim}}_M(f,K_{\alpha},d,\psi)$ and ${\rm{H}}_{\varphi}\overline{\rm{mdim}}_M(f,K_{\alpha},d,\psi)$ are finite, we slightly modify
our proof to show that any one of the quantities is infinite then the other three quantities must be infinite. Therefore, Theorem \ref{thm 2.3} is still valid.

\section*{Acknowledgement} 
The  second author was supported by the
National Natural Science Foundation of China (No. 12071222).
The third author  was  supported by the
National Natural Science Foundation of China (No. 11971236), Qinglan Project of Jiangsu Province of China. 

\section*{Data availability} 
No data was used for the research described in the article.
\section*{Conflict of interest} 
The author declares no conflict of interest.

\end{document}